\numberwithin{equation}{section}
\theoremstyle{plain}
\newtheorem{theorem}{Theorem}[section]
\newtheorem{lemma}[theorem]{Lemma}
\newtheorem{proposition}[theorem]{Proposition}
\theoremstyle{definition}
\newtheorem{definition}[theorem]{Definition}
\newtheorem{remark}[theorem]{Remark}
\def\beqn{\begin{equation}}
\def\beqn*{$$}
\def\eeqn{\end{equation}}
\def\ms{\mathsf}
\newcommand{\BX}{{\bf X}}
\newcommand{\BY}{{\bf Y}}
\def\P{\mathbb{P}}
\def\E{\mathbb{E}}
\newcommand{\reals}{{\mathbb R}}
\newcommand{\bbr}{\reals}
\newcommand{\R}{\reals}
\newcommand{\bbn}{{\mathbb N}}
\newcommand{\vep}{\varepsilon}
\newcommand{\eid}{\stackrel{d}{=}}
\newcommand{\ta}{\theta}
\newcommand{\one}{{\mathbbm 1}}
\newcommand{\remove}[1]{}
\newcommand{\G}{\mathcal G}
\newcommand{\mH}{\mathcal H}
\newcommand{\mK}{\mathcal K}
\newcommand{\mF}{\mathcal F}
\newcommand{\mG}{\mathcal G}
\newcommand{\mD}{\mathcal D}
\newcommand{\jointp}{p_{i_0,\dots,i_k}}
\newcommand{\inci}{1\le i_k < \dots < i_0}
\newcommand{\marp}{\widetilde p^{(0)}}
\newcommand{\marm}{\widetilde m^{(0)}}
\newcommand{\marvep}{\widetilde \vep^{(0)}}
\begin{document}

 \bibliographystyle{abbrvnat}

\renewcommand{\baselinestretch}{1.05}

\title[Degree counts in  preferential attachment complexes]
{Degree counts in random
  simplicial complexes of the preferential attachment type}

\author{Takashi Owada}
\address{Department of Statistics\\
Purdue University \\
IN, 47907, USA}
\email{owada@purdue.edu}

\author{Gennady Samorodnitsky}
\address{School of Operations Research and Information Engineering\\
Cornell University \\
NY, 14853, USA}
\email{gs18@cornell.edu}

\thanks{Owada's research was partially supported by the AFOSR grant
  FA9550-22-1-0238 at Purdue University. Samorodnitsky's research was partially supported by the AFOSR grant
  FA9550-22-1-0091 and NSF grant 2310974 at Cornell University.}

\subjclass[2010]{Primary 60G70, 05C80.  Secondary 60B12, 60F15, 60J27, 55U10}
\keywords{topological preferential attachment model; simplicial complex; regular variation; power-law tail; degree distribution.}

\begin{abstract}
 We extend  the classical preferential attachment random graph model to
random simplicial complexes. 
At each stage of the model, we choose one of the existing
$k$-simplices with probability proportional to its $k$-degree. The
chosen $k$-simplex then forms a $(k+1)$-simplex with a newly arriving
vertex. We  establish a strong  law of large numbers for the degree
counts across multiple dimensions.  The limiting probability mass
function is expressed as a mixture of  mass functions of different
types of negative binomial random variables. This limiting
distribution has  power-law characteristics and we explore the
limiting extremal dependence  of the degree counts across different
dimensions in the framework of  multivariate regular
variation. Finally, we prove multivariate  weak convergence, under appropriate
normalization, of degree counts in different  dimensions, of ordered
$k$-simplices. The resulting weak limit can  be represented as a
function of independent linear birth  processes with immigration.
\end{abstract}

\maketitle

\section{Introduction}

\subsection{Preferential attachment model}

 We extend the classical preferential attachment random graph model to
random simplicial complexes, which can be viewed as higher-dimensional
objects with an interesting topological structure. 
 Initially formulated to explain the "rich-get-richer" phenomena, the
 preferential attachment model has been applied  in the domain of
 graph theory, where this phenomenon is characterized by vertex
 degrees. Specifically, vertices are sequentially added and connected
 to edges in each stage of the model. The probability of  connection
 between a newly arriving vertex and existing ones is proportional to
 the degrees of the existing vertices. Consequently, vertices with
 already higher degrees have a stronger tendency of attracting new
 vertices compared to those with lower degrees.

The preferential attachment graph model is characterized by its
so-called {\it scale-free property}, as expressed in a power-law
distribution of the vertex degrees. Assuming a network of $n$
vertices, let $p_i(n)$ denote the  proportion of vertices with degree
$i$.  The power-law behavior can be  expressed as 
\begin{equation}  \label{e:power-law.intro}
\lim_{n\to\infty} p_i(n) \approx \alpha i^{-\beta},  \ \ \ i\to\infty, 
\end{equation}
where $\alpha, \beta$ are positive parameters (assuming that the
limits exist, as they often do).  The less rigorous interpretation of
 \eqref{e:power-law.intro} says that, for large $n$, $p_i(n) \approx
 \alpha i^{-\beta}$ for a range of large values of $i$. In this
 interpretation, the exponent  
$\beta$ remains independent of the  network's size for large $n$. The
preferential attachment model for such  evolving networks was
initially proposed by \cite{barabasi:albert:1999} and subsequently
subjected to more rigorous mathematical analysis by
\cite{bollobas:riordan:spencer:tusnady:2001}.  Since then, the
preferential attachment model has received  significant attention and
expanded its scope, both to undirected preferential attachment models
(\cite{athreya:ghosh:sethuraman:2008, berger:borgs:chayes:saberi:2014,
  garavaglia:stegehuis:2019}), and to directed models
(\cite{bollobas:borgs:chayes:riordan:2003,
  samorodnitsky:resnick:towsley:davis:willis:wan:2016,
  wang:resnick:2017, wang:resnick:2020}), among  others. A
comprehensive overview of this  topic is in \cite{hofstad:2017}.

Random simplicial complexes of preferential attachment type have
already received some attention in non-mathematical literature  (e.g.
\cite{courtneya:bianconi:2017, 
  rivkind:schreier:brenner:barak:2020}), but only few rigorous results
have been available.  The paper  by 
\cite{fountoulakis:iyer:mailler:sulzbach:2022} considers a model that
has implicit preferential attachment aspects  and established limit
theorems, including laws of large numbers, for asymptotic degree
counts. Furthermore, \cite{siu:samorodnitsky:yu:he:2024} explored the
clique complex built on
the affine preferential attachment graphs  of 
\cite{garavaglia:stegehuis:2019}, and obtained results on the 
growth rate of expected Betti numbers.   We, on the other hand,
introduce a model that is a direct higher-dimensional extension 
of the preferential attachment graph ideas.

Our model allows us to investigate  degrees in different
dimensions simultaneously. We do  so  in two different ways, starting with an 
investigation of 
asymptotic joint degree distributions across various dimensions and  establishing an explicit construction of the joint limiting
distribution. This limiting distribution turns out to possess  the
property of nonstandard multivariate regular variation, which, in particular, gives
us  different  marginal power laws in different
dimensions. Furthermore,  we 
uncover the dependence structure in the extremes of degrees in
different dimensions, through the explicit computation of the
corresponding tail measure.  Both the joint limiting
distribution of the degree counts and the tail measure involve new (to
the best of our knowledge) classes of distributions. 
 Additionally, we analyze the degree counts in different  dimensions,
 of appropriately ordered $k$-simplices in the network. As in the
 simpler  case of random graphs, these degree counts can be
 represented as functions of independent linear birth  processes with
 immigration,  but in the case of random simplicial complexes,  the obtained 
 representation is much more involved. This representation allows us 
to show that,
 under appropriate normalization, these degree counts have a joint
 weak limit.   

\subsection{Setup}

As outlined above, our preferential attachment model is a random
simplicial complex, so we start by defining what a simplicial complex
is. 

\begin{definition}
An (abstract) simplicial complex $\mK$ is a collection of sets
satisfying the following condition: if $\sigma\in \mK$ and $\tau$ is a
non-empty subset of $\sigma$, then $\tau \in \mK$. The dimension of
$\sigma\in \mK$ is defined as $|\sigma|-1$, where $|\cdot |$ denotes
cardinality. If the dimension of $\sigma$ is $k \ge 0$, it is called a
$k$-simplex. 
\end{definition}
One usually interprets a simplicial  complex in a geometric manner, by
identifying a $0$-simplex with a point (a vertex), a $1$-simplex
with a segment (a usual edge), a $2$-simplex with a filled triangle,
etc, so that the original set is the vertex set of the corresponding  geometric
simplex.

If $\sigma$ is a simplex of dimension $k$ in a simplicial complex
$\mK$, we denote by $D^{(k)}(\sigma)$ the {\it $k$-degree} of $\sigma$ in
$\mK$, defined as the number of the $(k+1)$-simplices in $\mK$
containing $\sigma$ as a subsimplex. Note that the $0$-degree of a
vertex in $\mK$ is simply the number of ($1$-dimensional) edges in
$\mK$ containing this  vertex, i.e. the usual degree of a vertex in a
graph. 

We now present a formal definition of our preferential
attachment model. It is a dynamic random simplicial complex, changing  at discrete times, in a Markovian manner. Fix a parameter
$k\in \{0,1,2,\ldots\}$. 
The model starts  at time 0 from a single initial $(k+1)$-simplex,
defined on a vertex set $[k+2]:= \{ 1,2,\dots,k+2 \}$. We denote the
simplicial complex consisting  of this single $(k+1)$-simplex and its
subsimplices by $G_0$.  At time $1$, a new vertex $k+3$ arrives and
selects one of the $k$-simplices from the 
initial $(k+1)$-simplex (there are $k+1$ of such $k$-simplices).  We
form a new $(k+1)$-simplex by connecting the vertex $k+3$  to each of
the vertices of the selected $k$-simplex, and add this simplex and its
subsimplices to $G_0$, resulting in a new simplicial  complex, denoted
by $G_1$. The vertex set of $G_1$ is $[k+3]$. In general, for $n\ge1$,
the state of the system at time $n$ is a simplicial complex $G_n$ with
a vertex set $[n+k+2]$
 obtained from the simplicial complex $G_{n-1}$ at time $n-1$ as follows. A new 
vertex $n+k+2$ arrives, and selects one the $k$-simplices present in
$G_{n-1}$. We then form a new $(k+1)$-simplex by connecting the vertex
$n+k+2$  to each of 
the vertices of the selected $k$-simplex, and add this simplex and its
subsimplices to $G_{n-1}$, resulting in the simplicial complex $G_n$.

It remains to explain how the newly arriving vertex chooses one of the
existing $k$-simplices. For $m=0,1,\dots,k$, let $\mF_{m,n}$ be the
collection of $m$-simplices in $G_{n}$, and recall that each
$m$-simplex $\sigma$ in $\mF_{m,n}$ has its associated $m$-degree
$D_{n}^{(m)}(\sigma)$ in
$G_n$. The newly arriving  vertex $n+k+2$ at time $n$ chooses a
$k$-simplex in $\mF_{k,n-1}$ with probability proportional to its
$k$-degree plus a parameter $\delta$. That is, given $G_{n-1}$ and
$\sigma\in \mF_{k,n-1}$,  
\begin{equation}  \label{e:selection.prob}
\P(\sigma \text{ is chosen at time } n) =
\frac{D_{n-1}^{(k)}(\sigma)+\delta}{\sum_{\tau \in \mF_{k,n-1}} \big(
  D_{n-1}^{(k)}(\tau ) + \delta\big)}. 
\end{equation}
We emphasize that $\delta \in (-1,\infty)$, along with $k\in
\{0,1,2,\ldots\}$, are parameters for this preferential attachment
process.

Recall that we identify each $m$-simplex $\sigma \in\mF_{m,n}$ with
the set of its vertices, all of which are numbered. The highest-numbered
vertex of a simplex describes the time at which the simplex has been added to
the complex. If the highest-numbered vertex is in the range
$\{1,\ldots, k+2\}$, then the simplex has been added at time 0, and if
the  highest-numbered vertex is $i>k+2$, then the simplex has been
added at time $i-(k+2)$. This allows us to talk meaningfully  about ``older''
simplices and ``newer'' or ``younger" simplices. 

A simple example is presented in Figure \ref{fig:process}. Notice that
if  $\delta$ is large, the selection probability
\eqref{e:selection.prob} is close to a uniform distribution. If
$\delta$ is  smaller, the probability \eqref{e:selection.prob} becomes
more sensitive to the $k$-degree of $\sigma$, and so, the preferential
attachment effect becomes more pronounced when $\delta$ becomes
smaller. 
\begin{figure}
\includegraphics[scale=0.38]{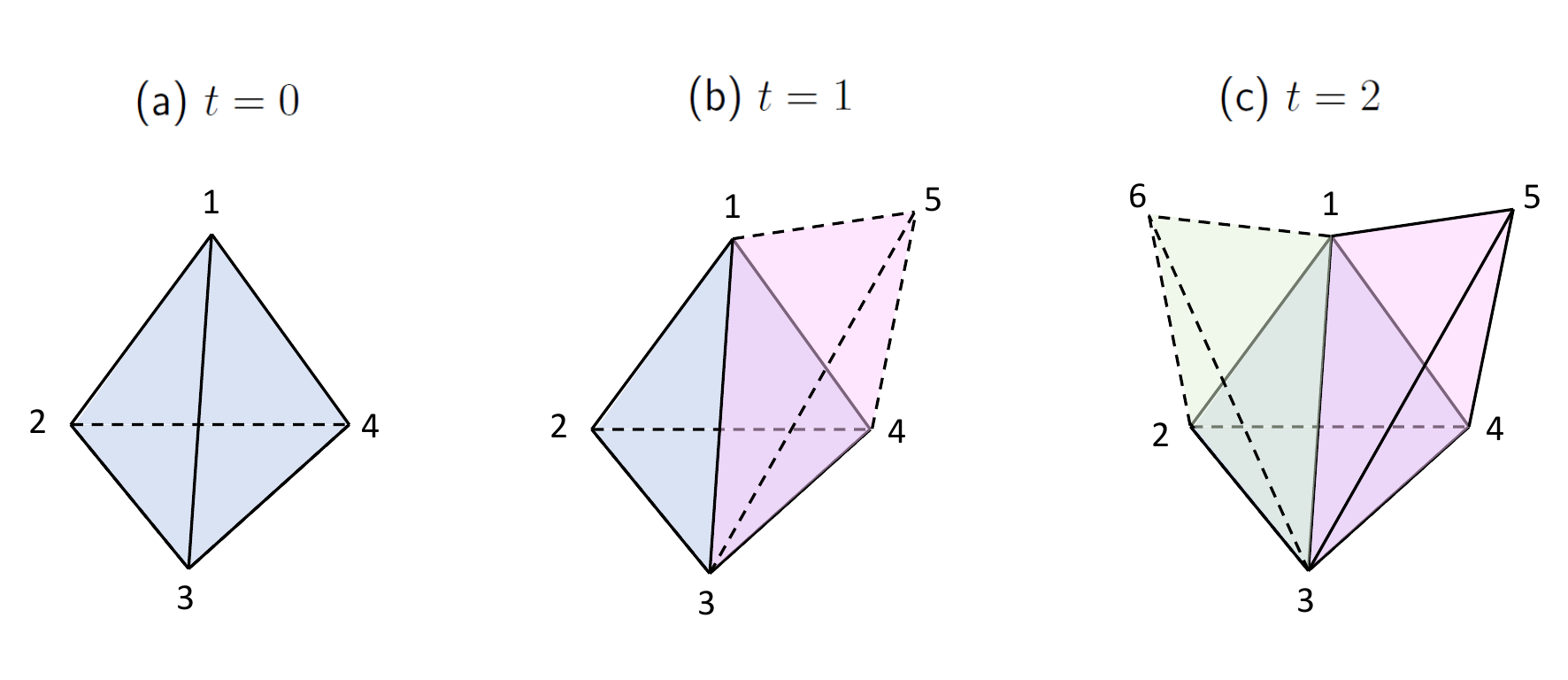}
\caption{\label{fig:process} \footnotesize{Assume that $k=2$ and $\delta=0$. (a) At time $0$, we start with the $3$-simplex on a vertex set $\{ 1,2,3,4 \}$. (b) At time $1$, each of the $2$-simplices $\{ 1,2,3 \}, \{ 1,2,4 \}, \{ 1,3,4 \}$, and $\{ 2,3,4 \}$ is chosen with  equal probability, i.e., $1/4$ each. Suppose $\{ 1,3,4 \}$ is selected at this stage. (c) At time $2$, there are initially seven $2$-simplices: $\{ 1,2,3 \}, \{ 1,2,4 \}, \{ 1,3,4 \}, \{ 2,3,4 \}, \{ 1,3,5 \}, \{ 1,4,5 \}$, and $\{ 3,4,5 \}$. Here, the $2$-degree of $\{ 1,3,4 \}$ is $2$, while all other six $2$-simplices have a $2$-degree $1$. Therefore, the $2$-simplices $\{ 1,3,4 \}$ is chosen with probability $1/4$, whereas all other six $2$-simplices are selected with probability $1/8$.}}
\end{figure}

When a new vertex arrives and selects one of the existing
$k$-simplices, this vertex will add $\binom{k+1}{k}=k+1$ of new
$k$-simplices to the model.  The $k$-degree of the chosen $k$-simplex
increases by $1$, and each of the newly added $k$-simplices has
$k$-degree $1$. Thus, the sum of the $k$-degrees of all $k$-simplices in
the model increases by $k+2$. Observe also that the initial
$(k+1)$-simplex (at time $0$) contains $\binom{k+2}{k+1}=k+2$ of
$k$-simplices, and each of the $k$-simplices has $k$-degree
$1$. Therefore, for every $n\ge0$,  
\begin{equation}  \label{e:number.k-simplices}
\text{the number of } k\text{-simplices in } G_{n} = |\mF_{k,n}| = 1+(n+1)(k+1), 
\end{equation}
and 
\begin{equation}  \label{e:sum.k-degrees.1}
\text{the sum of } k\text{-degrees of all } k\text{-simplices in } G_{n} = (n+1)(k+2). 
\end{equation}
By \eqref{e:number.k-simplices} and \eqref{e:sum.k-degrees.1}, one can rewrite \eqref{e:selection.prob} as
\begin{equation}  \label{e:prob.select.sigma}
\P(\sigma \text{ is chosen at time } n) = \frac{D_{n-1}^{(k)}(\sigma)+\delta}{n(k+2) + \delta(1+n(k+1))}. 
\end{equation}

With every $k$-simplex $\sigma\in \mF_{k,n}$, we can associate a
vector of degrees in dimensions ranging from 0 to $k$. Specifically,
for every $m\in \{  0,\dots,k\}$, we choose the {\it youngest} $m$-simplex 
among the
$k+1\choose m+1$ $m$-simplices contained in $\sigma$. If we denote  $\sigma = \{v_0,v_1,\dots,v_k\}$ with
$v_0< \dots <v_k$, then the $m$-degree of this youngest $m$-simplex is given by $D_n^{(m)}
(v_{k-m},\dots,v_k)$.
The  central object of our study  is
the joint distribution of these  degrees in all dimensions among
the $k$-simplices in $\mF_{k,n}$ as $n\to\infty$. For that purpose, we
define  
\begin{equation}  \label{e:def.N_n}
N_n(i_0,\dots,i_k):= \sum_{\sigma \in \mF_{k,n}} \one \big\{ D_n^{(m)} (v_{k-m},\dots,v_k) = i_m, \, m=0,\dots,k\big\}, 
\end{equation}
where $\sigma = \{v_0,v_1,\dots,v_k\}$ with $v_0< \dots <v_k$, and
$\one \{ \cdot \}$ denotes an indicator function.  
By construction, $N_n(i_0,\dots,i_k)\equiv 0$ whenever $i_m>n+k-m+1$
for some $m\in\{ 0,\dots,k \}$ or $i_m\ge i_{m-1}$ for some $m\in\{ 1,\dots,k \}$.

We organize the rest of this paper  as follows. Section
\ref{sec:regvar} studies the joint empirical probability mass function (pmf)
of the
degrees in all dimensions given by $N_n(i_0,\dots,i_k)/|\mF_{k,n}|$
as a function of $(i_0,\dots,i_k)$. Theorem \ref{t:SLLN} proves that
this empirical  pmf has, with probability 1, a non-random
limit, which itself is a  pmf. One
can think of 
this result as a  strong law of large numbers (SLLN) for the degree
counts in \eqref{e:def.N_n}.  In 
Proposition \ref{p:representation} we show that the limiting
probability distribution can
be represented as a mixture of laws involving certain versions of
independent negative 
binomial random variables. In Theorem \ref{t:regular.variation} we
prove that the limiting joint degree distribution has  a  
power-law behavior in the sense of nonstandard multivariate regular
variation and clarify  the extremal dependence structure  of the
degrees across different dimensions. 
Parts of our argument use a high-dimensional version of techniques
developed in \cite{samorodnitsky:resnick:towsley:davis:willis:wan:2016}.  
In Section \ref{sec:multivariate.weak.conv} we introduce an ordering
of all $k$-simplices ever present in the growing simplicial
complex. Since we associate with each $k$-simplex a vector of degrees
in dimensions 0 through $k$, this ordering gives us, for each time
$n$, a sequence of vectors of degrees. The sequence is finite (since
the sequence of all $k$-simplices is, at time $n$, restricted to the 
simplices in $\mF_{k,n}$), but we view it as a random sequence in
$(\bbn^{k+1})^\infty$ by appending  infinitely
many zeroes to the right of this random sequence. In Theorem \ref{t:joint.weak.conv} we prove that, with
proper scaling, this sequence has a week limit  that can be
represented in terms of certain linear birth processes and 
independent Gamma random variables. In general, some part of our treatment of
preferential attachment simplicial complexes follows the strategy used in \cite{hofstad:2017} for preferential
attachment graphs.

\medskip
\section{Multivariate  regular variation of degree distributions} \label{sec:regvar}

At time $n$, the random simplicial complex $G_n$ has $ |\mF_{k,n}| =
1+(n+1)(k+1)$ of $k$-simplices, so
\begin{equation} \label{e:empirical.pn}
p_{i_0,\dots,i_k}^{(n)}:= \frac{N_n(i_0,\dots,i_k)}{1+(n+1)(k+1)}, \ \ \ 
1\le i_k < \dots <i_0, 
\end{equation}
is a random pmf describing the joint
distribution of the degrees in all dimensions among all $k$-simplices
at that time. Our first result shows that, with probability 1, this
sequence of random pmfs  has a deterministic weak
limit. 

\begin{theorem}  \label{t:SLLN}
For every $1\le i_k<\dots <i_0$, the sequence of random pmfs defined in \eqref{e:empirical.pn} satisfies 
\begin{equation}  \label{e:SLLN}
 p_{i_0,\dots,i_k}^{(n)}\to \jointp \ \ \text{as } n\to\infty, \
 \text{a.s. for each } \, 1\le i_k<\dots <i_0, 
\end{equation}
where $(p_{i_0,\dots,i_k})_{1\le i_k < \dots <i_0}$ is a pmf,   defined recursively by 
\begin{align}
\begin{split}  \label{e:recursive.p}
p_{i_0,\dots,i_k} &= \frac{i_k-1+\delta}{\tau}\, p_{i_0-1,\dots,i_k-1} \\
&\quad +\sum_{m=0}^{k-1} \frac{(i_m-1-k+m)b_m - (i_{m+1}-k+m+1)b_{m+1}}{\tau}\, 
 p_{i_0-1,\dots,i_m-1,i_{m+1},\dots,i_k} \\
&\quad -\frac{(i_0-k)b_0+\delta}{\tau} \, p_{i_0,\dots,i_k} + \one \{ i_\ell=k-\ell+1, \, \ell=0,\dots,k \}, \ \ 1 \le i_k < \cdots < i_0, 
\end{split}
\end{align}
where 
\begin{align} 
\begin{split} \label{e:def.bm.tau}
b_m:= k-m+1 +\delta (k-m), \  \ m=0,\dots,k,   \ \ \text{ and } \ \   \tau:= k+2 +\delta (k+1). 
\end{split}
\end{align}
\end{theorem}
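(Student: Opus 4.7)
The plan is a Doob-decomposition strategy. For each $\sigma = \{v_0<\cdots<v_k\}\in\mathcal F_{k,n-1}$, write $\mu_\ell(\sigma) = \{v_{k-\ell},\dots,v_k\}$ for its youngest $\ell$-face and $I(\tau;\sigma) = \max\{\ell: \tau \supseteq \mu_\ell(\sigma)\}$ (with $I := -1$ when $v_k\notin\tau$). The degree vector of $\sigma$ changes at step $n$ exactly when $I(\tau;\sigma) \ge 0$, and when $I(\tau;\sigma) = m$ its first $m+1$ coordinates each increase by one. Consequently $\E[N_n(i_0,\dots,i_k) - N_{n-1}(i_0,\dots,i_k)\mid G_{n-1}]$ decomposes into a sum of $(D_{n-1}^{(k)}(\tau)+\delta)/d_n$ weights over $(\sigma,\tau)$ pairs with appropriate pre-transition vector and $I$-value (where $d_n = n(k+2)+\delta(1+n(k+1))$), plus the term $(k+1)\one\{i_\ell=k-\ell+1,\,\ell=0,\dots,k\}$ accounting for the $k+1$ new $k$-simplices created at each step.

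The combinatorial heart of the proof is the identity
\[
\sum_{\tau \in \mathcal F_{k,n-1},\, \tau \supseteq \mu} \bigl(D_{n-1}^{(k)}(\tau) + \delta\bigr) \;=\; b_m D_{n-1}^{(m)}(\mu) - (k+1-m)\,b_{m+1},
\]
valid for any $m$-simplex $\mu\in G_{n-1}$ with $0\le m\le k$ under the convention $b_{k+1}:=-\delta$. To establish it I would introduce the auxiliary counts $L_n(\mu)$ and $M_n^{(m)}(\mu)$ denoting the number of $k$-simplices, respectively $(k+1)$-simplices, containing $\mu$. At each step these all jump by a fixed multiple of $\one\{\mu\subseteq\tau\}$ ($k-m$ and $1$ respectively), as does $D_n^{(m)}(\mu)$; matching initial values (in $G_0$ or at the step $\mu$ is created) yields the affine formulas $L_n(\mu) = (k-m) D_n^{(m)}(\mu) + (k+1-m)(m+1-k)$ and $M_n^{(m)}(\mu) = D_n^{(m)}(\mu) - (k-m)$. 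Substituting these, together with the double-counting identity $\sum_{\tau\supseteq\mu} D_{n-1}^{(k)}(\tau) = (k+1-m)\,M_{n-1}^{(m)}(\mu)$ and the definition of $b_m$, produces the displayed formula. Applying it at $\mu_m(\sigma)$ and $\mu_{m+1}(\sigma)$ and subtracting gives the conditional probability that $I(\tau;\sigma) = m$, and these coefficients simplify to the form in \eqref{e:recursive.p} upon using the affine linearity of $b_m$ in $m$, i.e.\ $b_m - 2b_{m+1}+b_{m+2}=0$.

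Given this explicit recursion for $r_n(\vec i) := \E[N_n(\vec i)]$, and since $d_n/n \to \tau$ while $a_n := |\mathcal F_{k,n}|$ satisfies $a_n - a_{n-1} = k+1$, a Stolz--Cesaro argument with induction on $i_0$ (in the spirit of \cite{hofstad:2017}) yields $r_n(\vec i)/a_n \to p_{\vec i}$; the base case $\vec i = (k+1,k,\dots,1)$ reduces to a one-term linear recurrence giving $p_{k+1,k,\dots,1} = \tau/(\tau+b_0+\delta)$. For the stochastic part, the martingale differences
\[
\Delta_j(\vec i) := N_j(\vec i) - N_{j-1}(\vec i) - \E\bigl[N_j(\vec i) - N_{j-1}(\vec i) \mid G_{j-1}\bigr]
\]
are uniformly bounded by a constant depending only on $k$, since at each step only the $k+1$ new $k$-simplices and the pre-existing $\sigma$'s whose youngest vertex lies in the chosen $\tau$ (at most $(k+1)^2$) can change state. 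Azuma--Hoeffding with $t_n = n^{2/3}$ and Borel--Cantelli, combined with the same induction on $i_0$ to handle the random coefficients arising from earlier $N_{n-1}(\vec i')$'s, yields $N_n(\vec i)/a_n - r_n(\vec i)/a_n \to 0$ a.s., completing the SLLN. Nonnegativity of $p_{\vec i}$ is inherited as an a.s.\ limit of nonnegative quantities, and $\sum_{\vec i} p_{\vec i} = 1$ follows from $\sum_{\vec i} p_{\vec i}^{(n)} \equiv 1$ plus a uniform tail estimate in $i_0$ extracted directly from the recursion.

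The main obstacle will be establishing the combinatorial identity above: its clean form with coefficients $b_m$ is not evident from the dynamics and requires careful tracking of the auxiliary counts $L_n$ and $M_n^{(m)}$; once in hand the remainder assembles via standard preferential-attachment machinery.
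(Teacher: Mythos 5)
Your proposal follows the same two-part structure as the paper's proof (Lemmas \ref{l:SLLN.Nn.exp} and \ref{l:SLLN.exp.pmf}): derive the one-step conditional-expectation recursion via a case analysis on which $k$-simplices' degree vectors change, then show (a) that the scaled expectations converge to $p_{\vec i}$ and (b) that the random fluctuations vanish a.s.\ via Azuma--Hoeffding and Borel--Cantelli. The combinatorial identity you single out,
\[
\sum_{\tau \supseteq \mu} \bigl(D_{n-1}^{(k)}(\tau) + \delta\bigr) \;=\; b_m D_{n-1}^{(m)}(\mu) - (k+1-m)\,b_{m+1},
\]
with $b_{k+1}:=-\delta$, is exactly equivalent to the paper's working formulas \eqref{e:n.k-simplices}--\eqref{e:sum.k-degrees} (both reduce to $\sum_\tau (D^{(k)}(\tau)+\delta)=(D^{(m)}(\mu)-k+m)b_m+\delta$), and your way of deriving it through the affine counts $L_n$ and $M_n^{(m)}$ together with the double-counting $\sum_{\tau\supseteq\mu} D^{(k)}(\tau)=(k+1-m)M_n^{(m)}(\mu)$ is a cleaner packaging than the paper's direct accounting, while the subtraction step then matches the paper's coefficients via $b_m-2b_{m+1}+b_{m+2}=0$ as you note. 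Your Stolz--Ces\`aro route for the expectations is a standard variant of the paper's explicit bounded-error-sequence argument; both rest on the same induction on $i_0$.

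One claim is incorrect as stated: the increments $\Delta_j(\vec i)$ are \emph{not} bounded by a constant depending only on $k$, and the parenthetical ``at most $(k+1)^2$'' pre-existing $k$-simplices changing state is false. When $\tau$ is chosen, every $k$-simplex whose youngest vertex lies in $\tau$ has its degree vector change, and for a vertex $v\in\tau$ with $0$-degree $i$ the number of $k$-simplices with $v$ as youngest vertex is $(i-k)k+1$, which is unbounded as $v$'s degree grows. What saves the argument is that for a fixed target vector $\vec i$, only $\sigma$'s whose pre-step degree vector lies in a finite list determined by $\vec i$ can enter or leave state $\vec i$, so the differences are bounded by a constant $C_{\vec i}$ depending on $\vec i$ (essentially on $i_0$) as well as $k$. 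Since $\vec i$ is fixed throughout the Azuma step, this is harmless for the conclusion, but the justification should be rephrased. With that correction, the rest of the plan is sound and essentially parallel to the paper's.
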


\begin{remark}
One should interpret the probabilities $p_{i_0,\dots,i_k} $  in the right hand
side of the recursion \eqref{e:recursive.p} violating the condition $1
\le i_k < \cdots < i_0$ as equal to 0. It is easy to see that
\eqref{e:recursive.p} is a true recursion in the sense  that it can be solved first
for $(i_0,i_1,\ldots, i_k)=(k+1,k,\ldots, 1)$, then for
$(i_0,i_1,\ldots, i_k)=(k+2,k,\ldots, 1)$, etc. 

\end{remark}

Let
$(D_\infty^{(0)}, \dots, D_\infty^{(k)})$ be a discrete random
vector with the limiting pmf in Theorem \ref{t:SLLN}, i.e., 
\begin{equation}  \label{e:def.D_inf}
\P(D_\infty^{(m)} = i_m, \, m=0,\dots,k) = \jointp, \ \ \inci.
\end{equation}
We now present an explicit construction of this random vector using
simple ingredients. This construction will enable us in the sequel to
uncover  the extremal dependence structure 
of the  limiting degree distributions across different dimensions.

We start by introducing certain random elements that will play a role
in our representation.  With $\tau$ and $b_0$ as  in Theorem
\ref{t:SLLN}, let $Z$ be a Pareto$(\tau/b_0)$ random variable with  density 
\begin{equation}  \label{e:def.Z}
f_Z(z) =\frac{\tau}{b_0}\, z^{-(1+\tau/b_0)}, \ \ \ z\ge 1. 
\end{equation}
We will also use certain parametric families of discrete random variables whose
parameters will be taken from the description of $(\jointp)_{\inci}$ in Theorem
\ref{t:SLLN}. 
Given $r>0$ and $a\in (0,1)$,  the negative binomial
distribution with shape $r$ and probability for success $a$ has a 
pmf $p^{\rm NB}_{r,a}$  given by 
\begin{equation}  \label{e:def.neg.binomial}
p^{\rm NB}_{r,a}(\ell)= \frac{\Gamma(\ell+r)}{\ell!\, \Gamma (r)}\, (1-a)^\ell a^r, \ \ \ \ell\in \bbn:=\{0,1,2,\dots\}, 
\end{equation}
where $\Gamma(\cdot)$ is the Gamma function. Recall that the probability generating function (pgf) of the negative binomial 
distribution satisfies
\begin{equation}  \label{e:pgf.neg.binomial}
\sum_{\ell=0}^\infty x^\ell p^{\rm NB}_{r,a}(\ell) = \big( x(1-a^{-1}) + a^{-1} \big)^{-r}, \ \ \ 0<x<1. 
\end{equation}

Next, the so-called \emph{extended  truncated negative binomial distribution} with parameters 
$\kappa\in (0,1)$ and $a\in (0,1)$ has a pmf $p^{\rm TNB}_{\kappa,a}$  given by 
$$
p^{\rm TNB}_{\kappa,a}(\ell)  = \frac{\kappa\,
  \Gamma(\ell-\kappa)}{\ell!\, \Gamma(1-\kappa)} \cdot
\frac{(1-a)^\ell}{1-a^\kappa}, \ \ \ \ell\in \bbn_+:=\{1,2,\dots\}; 
$$
it can be viewed as a version of the negative binomial distribution
with  a negative shape ``$-\kappa$" in the range $(-1,0)$. It follows
from the binomial theorem that the pgf of the extended
  truncated negative binomial distribution satisfies 
\begin{align}
\begin{split}  \label{e:pgf.ext.trun.neg.binomial}
 \sum_{\ell=1}^\infty x^\ell p^{\rm TNB}_{\kappa,a}(\ell)&= \sum_{\ell=1}^\infty \frac{\kappa\, \Gamma(\ell-k)}{\ell!\, \Gamma(1-\kappa)}\cdot \frac{(1-a)^\ell}{1-a^\kappa}\, x^\ell  =\frac{1}{1-a^\kappa}\sum_{\ell=1}^\infty \binom{\kappa}{\ell} (-1)^{\ell-1} \big\{  (1-a)x\big\}^\ell  \\
&=\frac{1}{1-a^\kappa} \Big( 1- \sum_{\ell=0}^\infty \binom{\kappa}{\ell} \big\{  -(1-a)x\big\}^\ell \Big)  =\frac{1- \big( 1-(1-a)x \big)^\kappa}{1-a^\kappa}, \ \ \ 0<x<1. 
\end{split}
\end{align}
We refer
the reader to \cite{hoshino:2005}, Equation~(1.11),   and
\cite{engen:1978}, Section
3.4, for more information on the extended truncated
negative binomial distribution.

The construction proceeds conditionally on $Z$ as above. 
Given $Z=z>1$, we recursively construct discrete random vectors of
increasing dimensions, from dimension $1$ through $k$, 
as follows. 
We start with $N_0^{(1)}$, a
one-dimensional discrete random variable with,
$$
N_0^{(1)}\sim p^{\rm TNB}_{b_1/b_0,z^{-1}}.
$$
Let $2\leq m\leq k$, and suppose that we have already constructed an
$(m-1)$-dimensional discrete random vector $\bigl( N_0^{(m-1)}, \ldots, N_{m-2}^{(m-1)}\bigr)$. 
We now construct an $m$-dimensional discrete random vector $\bigl(
N_0^{(m)}, \ldots, N_{m-1}^{(m)}\bigr)$ by setting
\begin{align}
 &N_{m-1}^{(m)} \sim p^{\rm TNB}_{b_m/b_{m-1},z^{-b_{m-1}/b_0}}\label{e:ext.trun.neg.binom.setup} \\
 \notag &N_i^{(m)} = \sum_{j=1}^{N_{m-1}^{(m)}} N_{i,j}^{(m-1)}, \ i=0,\ldots,
m-2,
\end{align}
where $\bigl( N_{0,j}^{(m-1)}, \ldots, N_{m-2,j}^{(m-1)}\bigr)$, 
$j=1,2,\ldots$,    are independent of $ N_{m-1}^{(m)} $ and represent
i.i.d.~copies of the previously constructed  $\bigl( N_{0}^{(m-1)},
\ldots, N_{m-2}^{(m-1)}\bigr)$.

Once this construction is completed, the  procedure continues as
follows. We use a family of independent negative binomial random variables 
$$
T_m \sim  p^{\rm NB}_{(1+\delta)/b_m,z^{-b_m/b_0}}, 
\ \ \  m=0,1,\ldots, k. 
$$
We also use, for each $m=1,\ldots, k$, the i.i.d. copies $\bigl(
N_{0,j}^{(m)}, \ldots, N_{m-1,j}^{(m)}\bigr), \, j=1,2,\ldots $ of the
random vector $\bigl( N_{0}^{(m)}, \ldots, N_{m-1}^{(m)}\bigr)$. All
these random elements are independent (given $Z=z$).
  
We  are now ready to construct the random vector $(D_\infty^{(0)}, \dots,
 D_\infty^{(k)})$ in  \eqref{e:def.D_inf}.  

 \begin{proposition}  \label{p:representation}
    Let $Z$ be the Pareto$(\tau/b_0)$ random variable as in
  \eqref{e:def.Z}, and, conditionally on $Z=z$, define $T_m, \,
  m=0,1,\ldots, k$ and $\bigl( N_{0,j}^{(m)}, \ldots, N_{m-1,j}^{(m)}\bigr), \,
j\ge 1$, $m=1,\ldots, k$ as above. Then the random vector 
\begin{align}
  \label{e:representation.D.inf}
 \left( k-i+1 +T_i + \sum_{m=i+1}^k \sum_{j=1}^{T_m} N_{i,j}^{(m)}, \,
  i=0,\ldots, k\right)
\end{align}
has the same law as the random vector $(D_\infty^{(0)}, \dots,
 D_\infty^{(k)})$ in  \eqref{e:def.D_inf}.  Furthermore, for every
 fixed $Z=z$, and every 
 $0\leq i<m\leq k$, we have  $\sum_{j=1}^{T_m} N_{i,j}^{(m)}\sim p^{\rm
   NB}_{(1+\delta)/b_i,z^{-b_i/b_0}}$. 
\end{proposition}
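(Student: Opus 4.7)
The plan is to verify the proposition via pgf computations, first establishing the auxiliary claim and then deducing the full joint distributional identity. Conditional on $Z=z>1$, introduce the shorthand
$$
\psi_r(x) := \bigl( x(1-z^{b_r/b_0}) + z^{b_r/b_0}\bigr)^{1/b_r}, \quad r=0,1,\ldots,k,
$$
and observe from \eqref{e:pgf.neg.binomial} that the pgf of $\mathrm{NB}((1+\delta)/b_r, z^{-b_r/b_0})$ equals $\psi_r(x)^{-(1+\delta)}$. Using \eqref{e:pgf.ext.trun.neg.binomial} with $\kappa = b_m/b_{m-1}$ and $a = z^{-b_{m-1}/b_0}$, a direct computation gives the telescoping identity $\psi_m\bigl( \phi_m^{(m-1)}(x)\bigr) = \psi_{m-1}(x)$, where $\phi_m^{(m-1)}$ is the pgf of $\mathrm{TNB}(b_m/b_{m-1}, z^{-b_{m-1}/b_0})$. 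Since $N_i^{(m)} = \sum_{j=1}^{N_{m-1}^{(m)}} N_{i,j}^{(m-1)}$ is a compound sum, the pgf $\phi_m^{(i)}$ of $N_i^{(m)}$ satisfies $\phi_m^{(i)} = \phi_m^{(m-1)} \circ \phi_{m-1}^{(i)}$. Iterating the telescoping identity yields $\psi_m \circ \phi_m^{(i)} = \psi_i$ for all $0 \le i < m$, so the pgf of $\sum_{j=1}^{T_m} N_{i,j}^{(m)}$ is
$$
\psi_m\bigl( \phi_m^{(i)}(x)\bigr)^{-(1+\delta)} = \psi_i(x)^{-(1+\delta)},
$$
which is exactly the pgf of $\mathrm{NB}((1+\delta)/b_i, z^{-b_i/b_0})$, proving the auxiliary claim.

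For the main identity, let $q_{i_0,\ldots,i_k}$ denote the pmf of the random vector in \eqref{e:representation.D.inf}. By the uniqueness of the solution of \eqref{e:recursive.p} (starting from $(i_0,\ldots,i_k)=(k+1,k,\ldots,1)$), it suffices to verify that $q_{i_0,\ldots,i_k}$ satisfies the same recursion. To do this, compute the joint pgf $F(x_0,\ldots,x_k) := \sum q_{i_0,\ldots,i_k}\prod_\ell x_\ell^{i_\ell}$ by conditioning on $Z=z$, using the conditional independence of $T_0,\ldots,T_k$ and of the blocks $(N_{0,j}^{(m)},\ldots,N_{m-1,j}^{(m)})$ to factor the pgf, and then integrating against the Pareto density \eqref{e:def.Z}. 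Translate the recursion \eqref{e:recursive.p} into a first-order PDE for $F$: each coefficient linear in an index $i_m$ corresponds to an operator of type $x_m \partial_{x_m}$ acting on a suitable shift of $F$, the offset $(k-i+1)$ in the construction produces the prefactor $\prod_i x_i^{k-i+1}$, and the indicator term $\one\{i_\ell=k-\ell+1\}$ supplies the inhomogeneous source term. The verification then reduces, via the telescoping identities of Step~1 and the explicit Pareto mixing, to algebraic identities on $\psi_r$.

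The main obstacle is the combinatorial bookkeeping in matching the joint pgf from the construction to the PDE induced by \eqref{e:recursive.p}. The recursion has $k+1$ distinct shift patterns on its right-hand side, each with a coefficient linear in the corresponding index, and these must be matched with the nested branching structure $N_i^{(m)} = \sum_{j=1}^{N_{m-1}^{(m)}} N_{i,j}^{(m-1)}$ in the construction. Intuitively, the shift $p_{i_0-1,\ldots,i_m-1,i_{m+1},\ldots,i_k}$ corresponds to the outcome of increasing the $m$-th youngest subsimplex's $k$-degree in the limiting dynamics, which is exactly what the $m$-th layer of the hierarchical sum encodes; verifying this correspondence in detail is the crux of the argument, and is expected to follow from repeated application of $\psi_m \circ \phi_m^{(m-1)} = \psi_{m-1}$ together with the Pareto integration producing the power-law mixing.
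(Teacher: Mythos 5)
Your proof of the auxiliary claim is correct and amounts to the same telescoping that the paper exploits, recast in cleaner notation: with $\psi_r(x) = (x(1-z^{b_r/b_0})+z^{b_r/b_0})^{1/b_r}$ and $\phi_m^{(m-1)}$ the pgf of the TNB$(b_m/b_{m-1}, z^{-b_{m-1}/b_0})$ variable, the identity $\psi_m\circ\phi_m^{(m-1)} = \psi_{m-1}$ checks out directly from \eqref{e:pgf.ext.trun.neg.binomial}, and composing with the compound-sum relation $\phi_m^{(i)} = \phi_m^{(m-1)}\circ\phi_{m-1}^{(i)}$ yields $\psi_m\circ\phi_m^{(i)}=\psi_i$, hence the negative-binomial law of $\sum_{j=1}^{T_m} N_{i,j}^{(m)}$. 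This is a cosmetic repackaging of what the paper does with the $H_m$ functions in Lemma \ref{l:coincidence.with.pgf}.

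For the main claim, however, you have a strategy but not a proof. Where the paper derives the PDE \eqref{e:PDE.pgf} from the recursion, solves it via characteristics (method of characteristics in Lemma \ref{l:solu.PDE}) with the constant of integration pinned down by an asymptotic argument near $x\downarrow 0$, and then matches the explicit integral against the pgf built from the construction, you propose the reverse: compute the construction's joint pgf $F$ by Pareto mixing and plug it into the PDE to check it holds. That is a legitimate alternative route in principle, but the check you defer is precisely the content of the lemmas in the paper, and what you write does not carry it out: you say the matching "reduces to algebraic identities on $\psi_r$" and "is expected to follow from repeated application of $\psi_m\circ\phi_m^{(m-1)}=\psi_{m-1}$," and you explicitly call it "the crux of the argument." Differentiating the mixed integral in each $x_m$, untangling the nested composition structure of the $\phi_m^{(i)}$'s against the $k+1$ shift patterns in \eqref{e:recursive.p}, and showing the Pareto integration absorbs the boundary term coming from $d(x_0,\dots,x_k)$ is a substantial computation that the paper finds easier to do in the solve-then-match order (precisely because obtaining the characteristic-curve form \eqref{e:xm.hm} naturally surfaces the quantities $C_m+h_m(x)$ that match $H_m$). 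As written, the key step is asserted, not proved, so the argument has a genuine gap.

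A secondary point you should make explicit, whichever direction you take: to go from "$F$ satisfies the PDE" back to "the coefficients of $F$ satisfy \eqref{e:recursive.p}" you need to argue that equating power-series coefficients is legitimate and that $F$ is supported on $\{1\le i_k<\cdots<i_0\}$ (the latter is clear from your construction, but the former requires the term-by-term differentiation justification that the paper handles implicitly via working in the interior and using continuity up to the boundary).
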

\begin{remark}
It follows from Proposition \ref{p:representation} that, marginally,
for each $i=0,\ldots, k$, the limiting degree 
$D_\infty^{(i)}$ has the distribution of a negative binomial random
  variable with shape $(k-i+1)(1+\delta)/b_i$ and a random probability for
  success $Z^{-b_i/b_0}$, with $Z$ defined in \eqref{e:def.Z}. 
The negative binomial ingredients with random probabilities for
success in 
Proposition  \ref{p:representation} represent mixtures of such laws,
and such mixtures have been previously observed   in a simpler context of
preferential attachment random graphs in \cite{ ross:2013,
  samorodnitsky:resnick:towsley:davis:willis:wan:2016}.  In the case
of random simplicial complexes,  the representation is, naturally,  much
more intricate.  
\end{remark}

Our next task is to  address the multivariate power-law behaviour of the limiting
random vector $(D_\infty^{(0)}, \dots,  D_\infty^{(k)})$ in the sense
of nonstandard multivariate regular variation. The latter is defined
in terms of vague convergence of certain scaled tail probabilities.
Let $E_k = [0,\infty)^{k+1}\setminus \{ \bf 0 \}$ (so that compact
sets are closed sets bounded away from the origin) and let $M_+(E_k)$ be
the space of Radon measures on $E_k$. Given $\xi_n, \xi\in
M_+(E_k)$, we say that $\xi_n$ 
converges vaguely to $\xi$ in $M_+(E_k)$, denoted as $\xi_n
\stackrel{v}{\to} \xi$, 
 if   $\int_{E_k}g(x)\xi_n(\dif x)\to
\int_{E_k}g(x)\xi(\dif x)$ as $n\to\infty$ for all $g\in C_K^+(E_k)$,  the
space of all non-negative and continuous functions on $E_k$ with
compact support. We refer the reader to \cite{resnick:2007} for  
details on vague convergence and nonstandard multivariate regular
variation. 

Multivariate regular variation involves a tail exponent for each
coordinate, and the extremal dependence structure is characterized by
the tail measure. In order to describe the tail measure of the random
vector $(D_\infty^{(0)}, \dots,  D_\infty^{(k)})$,  we introduce another
group of random elements. Let  $\big( \Gamma_{(1+\delta)/b_m}^{(m)} \big)_{m=0}^k$ be independent Gamma random variables, where each 
$\Gamma_{(1+\delta)/b_m}^{(m)}$ has the shape parameter 
$(1+\delta)/b_m$ and the unit scale. In other words, the density function
of $\Gamma_{(1+\delta)/b_m}^{(m)}$ is given by 
$$
f_{\Gamma_{(1+\delta)/b_m}^{(m)}}(x) = \frac{1}{\Gamma\big((1+\delta)/b_m\big)}\, x^{\frac{1+\delta}{b_m}-1}e^{-x}, \ \ \ x\ge 0. 
    $$

The building blocks of the next group of random elements are certain
L\'evy processes (more specifically, subordinators), which we define
by their Laplace transforms. More specifically, 
let $\big( W_\infty^{(m,i)} (t), \, t\ge0\big)$, $1\le i \le m \le k$, be independent positive L\'evy processes, such that  
$$
\E \big[ e^{-\theta W_\infty^{(m, i)}(t)} \big] = \exp\Big\{
-t\int_0^\infty (1-e^{-\theta x}) g_i(x)\dif x\Big\}, \ \ \ \ta \ge0,
\ \  t \ge0,  
$$
where 
\begin{equation}  \label{e:levy.density}
g_i(x) := \frac{b_i}{b_{i-1}\Gamma(1-b_i/b_{i-1} )}\, x^{-1-b_i/b_{i-1}} e^{-x}, \ \ x>0, 
\end{equation}
is the corresponding L\'evy density. 
Assume further that all of the L\'evy processes above are independent of $\big( \Gamma_{(1+\delta)/b_m}^{(m)} \big)_{m=0}^k$. 
For future reference,  we note that 
\begin{align} \label{e:LogLap}
-\log \E \big[ e^{-\theta W_\infty^{(m, i)}(1)} \big] &= \int_0^\infty
  (1-e^{-\theta x})\, g_i(x)\dif x = \int_0^\theta  \int_0^\infty e^{-\xi x}x
                               g_i(x) \dif x \dif \xi  \\ 
\notag &=\frac{b_i}{b_{i-1}}\int_0^\theta  (1+\xi)^{\frac{b_i}{b_{i-1}}-1}
         \dif \xi = (1+\theta)^{b_i/b_{i-1}}-1.  
\end{align}

For $m=1,\ldots, k$,  we define  a random vector
$\BX^{(m)}$ in $[0,\infty)^{k+1}$ as follows: 
\begin{equation} \label{e:lim.proc}
X^{(m)}_m=\Gamma_{(1+\delta)/b_m}^{(m)}, \ \ \ 
  X^{(m)}_i=W_\infty^{(m, i+1)}\bigl( X^{(m)}_{i+1}\bigr), \ i=0,\ldots, m-1,
\end{equation}
and $X_i^{(m)}\equiv 0$ for  $i=m+1,\ldots, k$.

\begin{remark}
Gamma random variables appear naturally in our representation of the
tail measure as
weak limits of suitably normalized negative binomial random
variables. Similarly, 
the L\'evy processes  arise naturally as weak limits of sums of
appropriately normalized extended truncated negative binomial random
variables.  We draw the readers' attention to  similarities 
between the L\'evy density \eqref{e:levy.density}  and the 
L\'evy density of a Gamma L\'evy process. 
\end{remark}

\begin{theorem}  \label{t:regular.variation}
We have,  as $h\to\infty$, 
\begin{align*}
&h \P \bigg( \Big( \frac{D_\infty^{(m)}}{h^{b_m/\tau}} \Big)_{m=0}^k
                 \in \cdot\bigg) \stackrel{v}{\to} m_{\rm tail}
\end{align*}
in $M_+(E_k)$, with the tail measure $m_{\rm tail}$ given by              
$$
m_{\rm tail}(\cdot):= \left( \nu_{\tau/b_0} \otimes \P\circ \left(\sum_{m=0}^k
    \BX^{(m)}\right)^{-1}\right) \circ \chi^{-1}(\cdot),
$$
where $\nu_{\tau/b_0}(x,\infty) = x^{-\tau/b_0}$, $x>0$ and
$\chi:(0,\infty]\times E_k\to  E_k$ is defined as
\begin{equation}  \label{e:def.chi}
\chi(x,y_0,y_1,\dots,y_k) = (xy_0, x^{b_1/b_0}y_1,\dots, x^{b_k/b_0}y_k).
\end{equation}
In particular, for every $(x_0,x_1,\dots,x_k)\in E_k$, 
\begin{align}  \label{e:tail.explicit}
m_{\rm tail}\left( \prod_{i=0}^k (x_i,\infty] \right) 
&=\frac{\tau}{b_0}\int_0^\infty \P \Big( \sum_{m=0}^k X_i^{(m)}>\frac{x_i}{u^{b_i/b_0}}, \ i=0,\dots,k    \Big)\,  u^{-(1+\tau/b_0)}         \dif u.  
\end{align}
\end{theorem}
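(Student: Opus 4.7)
\medskip
\noindent\textbf{Proof plan.} I would use the explicit representation of $(D_\infty^{(0)},\dots,D_\infty^{(k)})$ from Proposition~\ref{p:representation}, conditioning on the Pareto variable $Z$. For any non-negative continuous $g$ with compact support in $E_k$, after substituting $z = u h^{b_0/\tau}$,
\begin{equation*}
h\, E\bigl[ g\bigl( (h^{-b_m/\tau} D_\infty^{(m)})_{m=0}^k \bigr) \bigr]
= \int_{h^{-b_0/\tau}}^{\infty} E\bigl[g(\cdot) \bigm| Z = u h^{b_0/\tau}\bigr]\,\tfrac{\tau}{b_0}\,u^{-(1+\tau/b_0)}\,du .
\end{equation*}
The plan reduces to (a) identifying the conditional weak limit of $\bigl(h^{-b_m/\tau}D_\infty^{(m)}\bigr)_{m=0}^k$ given $Z = u h^{b_0/\tau}$ for every fixed $u>0$, and (b) justifying the passage to the limit in the integral.

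For step (a), the representation decomposes the vector into conditionally independent contributions indexed by $m=0,\ldots,k$, so I would treat each $m$ separately. First, since $T_m\sim p^{\rm NB}_{(1+\delta)/b_m,\,z^{-b_m/b_0}}$, the classical scaling of a negative binomial with vanishing success probability gives $z^{-b_m/b_0}T_m\Rightarrow \Gamma_{(1+\delta)/b_m}^{(m)}$, hence $h^{-b_m/\tau}T_m\Rightarrow u^{b_m/b_0}\Gamma_{(1+\delta)/b_m}^{(m)}$. Next, for $i=m-1$, I would insert the pgf \eqref{e:pgf.ext.trun.neg.binomial} of $N_{m-1,j}^{(m)}$ into $E[x^{\sum_{j=1}^{T_m} N_{m-1,j}^{(m)}}\mid T_m]=E[x^{N_{m-1}^{(m)}}]^{T_m}$ at $x = e^{-\theta/h^{b_{m-1}/\tau}}$. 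Using the identity $(z^{-b_{m-1}/b_0})^{b_m/b_{m-1}}=z^{-b_m/b_0}$ and a Taylor expansion in $h^{-1}$, the conditional Laplace transform given the weak limit $\tilde T_m$ of $T_m/h^{b_m/\tau}$ converges to $\exp\!\bigl(-[(1+u^{b_{m-1}/b_0}\theta)^{b_m/b_{m-1}}-1]\,\tilde T_m\bigr)$, which by \eqref{e:LogLap} is the Laplace transform of $u^{b_{m-1}/b_0}W_\infty^{(m,m)}(\tilde T_m)=u^{b_{m-1}/b_0}X_{m-1}^{(m)}$. Iterating this calculation along the recursion $N_i^{(m)}=\sum_l N_{i,l}^{(m-1)}$, at each level replacing $(b_m,b_{m-1})$ by $(b_{i+1},b_i)$ and rescaling the Laplace argument by the appropriate power of $u$, yields jointly
\begin{equation*}
\bigl(h^{-b_i/\tau}\,\textstyle\sum_{j=1}^{T_m}N_{i,j}^{(m)}\bigr)_{i=0}^{m-1}\Rightarrow \bigl(u^{b_i/b_0}X_i^{(m)}\bigr)_{i=0}^{m-1}.
\end{equation*}

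Combining across $m$ via conditional independence (with the deterministic constants $k-i+1$ absorbed by the normalization) yields $(h^{-b_i/\tau}D_\infty^{(i)})_{i=0}^{k}\Rightarrow (u^{b_i/b_0}\sum_{m=i}^k X_i^{(m)})_{i=0}^k$ given $Z = u h^{b_0/\tau}$. For step (b), the conditional probability that the normalized vector lies in the (compact, origin-free) support of $g$ is bounded by $\|g\|_\infty$ and, by Markov applied to moments of the constituent negative binomials, is at most $C\,u^{p b_m/b_0}$ for any $p\ge 1$ as $u\to 0$; choosing $p$ with $p b_m>\tau$ gives a dominating function integrable against $u^{-(1+\tau/b_0)}du$. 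Dominated convergence then yields
\begin{equation*}
h E[g(\cdot)] \longrightarrow \int_0^\infty E\Bigl[g\bigl((u^{b_i/b_0}\,\textstyle\sum_{m=i}^k X_i^{(m)})_{i=0}^k\bigr)\Bigr]\,\tfrac{\tau}{b_0}\,u^{-(1+\tau/b_0)}\,du,
\end{equation*}
which is $\int g\,dm_{\rm tail}$ with $m_{\rm tail}=(\nu_{\tau/b_0}\otimes \mathcal L(\sum_m\BX^{(m)}))\circ\chi^{-1}$; the rectangle formula \eqref{e:tail.explicit} follows by approximating $\one_{\prod_i(x_i,\infty]}$ by such test functions.

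The main obstacle is the iterated Laplace transform argument in (a). Identifying the limit as a chain of subordinators relies on the exact algebraic identity $(z^{-b_i/b_0})^{b_{i+1}/b_i}=z^{-b_{i+1}/b_0}$, which makes the shifted power $(u^{-b_i/b_0}+\theta)^{b_{i+1}/b_i}$ appearing in the small-$h$ expansion collapse to the clean exponent $(1+\tilde\theta)^{b_{i+1}/b_i}-1$ of \eqref{e:LogLap} with $\tilde\theta=u^{b_i/b_0}\theta$. Carrying this through simultaneously for all coordinates $i=0,\ldots,m-1$ and then all $m$, and verifying joint (not merely marginal) convergence of the nested random sums, is the technical heart of the proof.
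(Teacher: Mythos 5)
Your plan is essentially the same as the paper's proof: step (a) reproduces Lemma~\ref{l:weak.conv.neg.binom} (the conditional weak convergence as $z\to\infty$, established by iterating Laplace/pgf transforms through the nested negative binomial and truncated negative binomial structure), and step (b) is the Markov--moment truncation that the paper packages as a Slutsky-type argument along with Proposition~5.5 of \cite{resnick:2007}. You replace the paper's abstract route (vague convergence in $M_+((0,\infty]\times E_k)$, then push-forward by $\chi_1$) with a direct change of variables $z=u h^{b_0/\tau}$ and dominated convergence; this is a legitimate equivalent phrasing and the domination bound $C u^{\ell b_i/b_0}$ with $\ell b_i>\tau$ is exactly the tail estimate the paper uses in its final paragraph.

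One slip to watch: in your iterated Laplace-transform step you define $\tilde T_m$ as the weak limit of $T_m/h^{b_m/\tau}$, which is $u^{b_m/b_0}\Gamma_{(1+\delta)/b_m}^{(m)}=u^{b_m/b_0} X_m^{(m)}$, and then write the conditional limit as $\exp\bigl(-[(1+u^{b_{m-1}/b_0}\theta)^{b_m/b_{m-1}}-1]\,\tilde T_m\bigr)$. The correct exponent carries an extra factor $u^{-b_m/b_0}$, i.e.\ it should be $\exp\bigl(-u^{-b_m/b_0}\tilde T_m\,[(1+u^{b_{m-1}/b_0}\theta)^{b_m/b_{m-1}}-1]\bigr) = \exp\bigl(-X_m^{(m)}[(1+u^{b_{m-1}/b_0}\theta)^{b_m/b_{m-1}}-1]\bigr)$, because the pgf raised to $T_m$ produces $z^{-b_m/b_0}T_m$, not $h^{-b_m/\tau}T_m$, in the exponent. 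With that correction your identification with $u^{b_{m-1}/b_0}W_\infty^{(m,m)}(X_m^{(m)})=u^{b_{m-1}/b_0}X_{m-1}^{(m)}$ is exactly right, and your stated final limit $\bigl(h^{-b_i/\tau}\sum_{j\le T_m}N_{i,j}^{(m)}\bigr)_i\Rightarrow\bigl(u^{b_i/b_0}X_i^{(m)}\bigr)_i$ is correct; but if you were to carry the uncorrected exponent through the recursion, the powers of $u$ would come out wrong at higher levels. The paper sidesteps this bookkeeping by proving the conditional limit purely in $z$ (its functions $g_m$ of \eqref{g.m}) and only introducing the $u$-scaling in the mixing step via the map $\chi_1$, which is slightly cleaner.
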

\begin{remark}
We will see in Remark \ref{rk:all.Gamma} that for any $i=0,\ldots, k$, 
the random variable  $\sum_{m=0}^k X_i^{(m)}$  in \eqref{e:tail.explicit}
follows the Gamma distribution with a shape parameter of
$(k-i+1)(1+\delta)/b_i$ and unit scale. It follows that the projection
of the limiting Radon measure on each axis is a mixture of Gamma distributions.
\end{remark}
\medskip

\section{Degrees of  fixed
  $k$-simplices}  \label{sec:multivariate.weak.conv}

The previous section dealt with the vector of degrees of a
$k$-simplex uniformly chosen among all $k$-simplices. In this section
we take a different point of view. We start by enumerating all
$k$-simplices ever present in the growing network. Each one of these
fixed $k$-simplices has a vector of degrees, which may change
over time. Considering all fixed $k$-simplices together gives us a
sequence of vectors of degrees, and in this section we show that this
random sequence of vectors, upon proper normalization, converges
weakly in the appropriate space. 

We begin by systematically enumerating all $k$-simplices. 
As before, we represent a $k$-simplex $\sigma$  
as an ordered set of vertices, $\sigma = \{ \sigma(0), \sigma(1),
\dots, \sigma(k) \}$ with 
$\sigma(0) < \sigma(1) < \dots < \sigma(k)$ (the initial $k+2$
vertices are numbered arbitrarily, while the subsequent vertices are
assigned numbers in order of their arrival). 
Let   
\begin{align} \label{e:initial.ksim}
\sigma_1 &= \{ 1,2,\dots,k+1 \}, \\
\notag \sigma_2 &= \{ 1,2,\dots,k,k+2 \}, \\
\notag \sigma_3 &= \{ 1,2,\dots,k-1,k+1,k+2\}, \\
\notag &\vdots \\
\notag \sigma_{k+2} &= \{ 2,3,\dots,k+2 \}, 
\end{align}
denote the $k$-simplices at time $0$ in the  model. Upon selecting the $k$-simplex at time $1$ with
label $\ell(1)$, we continue the numbering by 
\begin{align} \label{e:next.ksim}
\sigma_{k+3} &= \big\{ \sigma_{\ell(1)}\setminus \{ \sigma_{\ell(1)}(k) \}, k+3\big\}, \\
\notag \sigma_{k+4} &= \big\{ \sigma_{\ell(1)}\setminus \{ \sigma_{\ell(1)}(k-1) \}, k+3\big\}, \\
\notag &\vdots \\
\notag \sigma_{1+2(k+1)} &= \big\{ \sigma_{\ell(1)}\setminus \{ \sigma_{\ell(1)}(0) \}, k+3\big\}. 
\end{align}
More generally, by defining $\ell(n)\in \{  1,2,\dots,1+n(k+1)\}$ as the label of the selected $k$-simplex at time $n$, we extend the enumeration as follows: 
\begin{align} \label{e:gen.ksim}
\sigma_{1+n(k+1)+1} &= \big\{ \sigma_{\ell(n)}\setminus \{ \sigma_{\ell(n)}(k) \}, k+n+2\big\}, \\
\notag \sigma_{1+n(k+1)+2} &= \big\{ \sigma_{\ell(n)}\setminus \{ \sigma_{\ell(n)}(k-1) \}, k+n+2\big\}, \\
\notag &\vdots \\
\notag \sigma_{1+(n+1)(k+1)} &= \big\{ \sigma_{\ell(n)}\setminus \{ \sigma_{\ell(n)}(0) \}, k+n+2\big\}. 
\end{align}
Notice that once a $k$-simplex is assigned a number, this number will
never change.

At time $n$, all currently present $k$-simplices are numbered from 1 to
$|\mF_{k,n}|= 1 + (n+1)(k+1)$ (see \eqref{e:number.k-simplices}), 
 and, hence, determine a finite sequence of vectors of
degree counts across the dimensions:  
$$
\mD(n) := \Big( \big( D_n^{(m)}(\sigma_j(k-m),\dots,\sigma_j(k))
\big)_{m=0}^k :  1 \le j \le |\mF_{k,n}| \Big). 
$$
We view $\mD(n)$ as an element of $(\bbn^{k+1})^\infty$  by
augmenting it on the right with infinitely many zeros. We are
interested in the resulting sequence $(\mD(n))_{n\ge0}$ of random elements in 
$(\bbn^{k+1})^\infty$. We start by showing that, distributionally,
this sequence can be represented as a sequence of  functionals of
independent linear birth processes with immigration, hereafter
referred to as B.I. processes. Recall that a B.I. process with parameters $(\lambda, \theta)$, where $\lambda>0$ is a birth rate and $  \theta\geq 0$  is the  immigration rate, is a pure birth process with transition rates $q_{i,i+1}:= \lambda i +
\theta$ for $i\geq1$. For details  on  B.I. processes, we refer the
reader to \cite[Section 5]{resnick:1992} and \cite[Section
3]{wang:resnick:2019}.

We work with a  sequence of B.I. processes, in one-to-one correspondence
with the $k$-simplices ever present in the growing network, 
starting at the time a $k$-simplex joins the network. Specifically,
with the initial $k+2$ simplices in \eqref{e:initial.ksim},  we
associate   independent B.I. processes with parameters $(1,\delta)$, 
 denoted as $\big( BI_{\sigma_i}(t), \, t \geq 0 \big)_{1\leq i \leq k+2}$, such that $BI_{\sigma_i}(0)=1$.
Letting $T_1$ be the time of the first jump in one of the processes
$\big( BI_{\sigma_i}(t) \big)_{1\leq i \leq k+2}$, and we let $\ell(1)$
be the label of that process and enumerate the next $k+1$ simplices
via \eqref{e:next.ksim}. 
Once again, we associate with these simplices independent
B.I. processes with parameters $(1,\delta)$,  denoted as 
$\big( BI_{\sigma_i}(t-T_1), \, t \ge T_1 \big)_{k+3 \le i \le
  1+2(k+1)}$, such that $BI_{\sigma_i}(0)=1$. 
In general, let $T_n$ denote the time of the first jump \emph{after
  $T_{n-1}$} in one of the processes $\big( BI_{\sigma_i}(t-T_{n-1}),
\, t \ge T_{n-1} \big)_{1\le i \le 1+n(k+1)}$ and let $\ell(n)\in \{
1,2,\dots,1+n(k+1) \}$ be the label of that process. We define the
next $k+1$ simplices via \eqref{e:gen.ksim},  and their associated
B.I. processes start at that time, and the procedure continues. 

We denote  $\mathcal{H}_{k,n}=\{
\sigma_1,\sigma_2,\dots,\sigma_{1+(n+1)(k+1)} \}$, which has the same cardinality as 
$\mathcal{F}_{k,n}$, and for any $j,n$ such that 
$\sigma_j\in \mathcal{H}_{k,n}$, the label of $\sigma_j$ is denoted by
$\ell(\sigma_j)=j$ (independent of $n$ satisfying the
condition). Finally,  $b(\sigma_j)$ represents the time at which
$\sigma_j$ first joins the model. For $n=0,1,2,\dots$ 
\begin{equation}  \label{e:widetilde.D.n}
\widetilde \mD (n) := \bigg( \Big( k-m+\frac{1}{k-m+1}\hspace{-10pt}\sum_{\substack{\tau\in \mH_{k,n}, \\ \{\sigma_j(k-m), \dots, \sigma_j(k)\}\subset  \tau}} \hspace{-20pt} BI_\tau (T_n-T_{b(\tau)}) \Big)_{m=0}^k,
 \, j \in \big[ |\mH_{k,n}| \big]  \bigg) 
\end{equation}
is a finite sequence of random vectors in $\bbn^{k+1}$, which is, once
again, viewed as a random element of $(\bbn^{k+1})^\infty$ by appending 
infinitely many zeroes on the right.  

\begin{proposition}  \label{p:degree.and.BI}
 The sequences $\big( \mD(n) \big)_{n\ge0}$ and $\big( \widetilde
\mD(n) \big)_{n\ge0}$ of random elements of $(\bbn^{k+1})^\infty$ 
have the same law. 
\end{proposition}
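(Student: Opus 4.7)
The plan is to realize the discrete preferential attachment dynamics as the jump sequence of the family of independent B.I.~processes defined before the proposition, and then to express every $m$-degree as an explicit function of the current $k$-degrees. Two ingredients power the argument: the competing exponentials (memoryless) property of independent pure birth processes with immigration, and a double-counting identity relating $m$- and $k$-degrees.

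First I would show by induction on $n\ge 0$ that, in the continuous-time construction, the configuration $\bigl(BI_\tau(T_n-T_{b(\tau)})\bigr)_{\tau\in\mH_{k,n}}$ coincides pathwise with the vector of $k$-degrees $\bigl(D_n^{(k)}(\tau)\bigr)_{\tau\in\mH_{k,n}}$ and that, jointly, the selection sequence $(\ell(1),\dots,\ell(n))$ has the same law as in the original model. Each active process $BI_\tau$ in state $i$ jumps at rate $i+\delta$ (with $\lambda=1$), so by the competing exponentials property, at the stopping time $T_n$ the next jump falls in $BI_\tau$ with probability proportional to $BI_\tau(T_n-T_{b(\tau)})+\delta$; under the induction hypothesis this coincides with the preferential attachment probability in \eqref{e:prob.select.sigma}. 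Once $\tau=\sigma_{\ell(n+1)}$ is selected, $BI_\tau$ increments by $1$, matching the increment of the $k$-degree, while the $k+1$ freshly created $k$-simplices acquire initial B.I.~value $1$, matching their initial $k$-degree $1$, so the induction closes.

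Next I would establish, for every $j$, every $m\in\{0,\dots,k\}$, and every $n\ge b(\sigma_j)$, the combinatorial identity
$$
D_n^{(m)}\bigl(\sigma_j(k-m),\dots,\sigma_j(k)\bigr) \;=\; (k-m) \;+\; \frac{1}{k-m+1}\sum_{\substack{\tau \in \mH_{k,n}\\ \{\sigma_j(k-m),\dots,\sigma_j(k)\}\subset \tau}} D_n^{(k)}(\tau).
$$
Writing $S=\{\sigma_j(k-m),\dots,\sigma_j(k)\}$, a double-counting argument yields
$$
\sum_{\tau \in \mH_{k,n},\, S\subset \tau} D_n^{(k)}(\tau) \;=\; (k-m+1)\cdot \bigl|\{\Sigma : \Sigma \text{ is a }(k+1)\text{-simplex of }G_n,\, S\subset \Sigma\}\bigr|,
$$
since each $(k+1)$-simplex containing $S$ has exactly $k-m+1$ of its $k+2$ codimension-one faces containing $S$. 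So the identity reduces to $D_n^{(m)}(S)=(k-m)+|\{\Sigma:\Sigma\supset S\}|$, which I would prove by induction on $n\ge b(S)$: at $n=b(S)$ the unique $(k+1)$-simplex containing $S$ has $k-m+1$ vertices outside $S$, hence $k-m+1$ $(m+1)$-simplices containing $S$; in the inductive step, the only possibly new $(m+1)$-simplex containing $S$ must include the incoming vertex $v_{n+k+2}$, so equals $\{S,v_{n+k+2}\}$, which enters the complex precisely when $S\subset \sigma_{\ell(n)}$, the same condition that causes the number of $(k+1)$-simplices containing $S$ to increase by one.

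Combining the two steps, the right-hand side of the formula \eqref{e:widetilde.D.n} defining $\widetilde\mD(n)$ equals $D_n^{(m)}(\sigma_j(k-m),\dots,\sigma_j(k))$ in the coupled construction for every $j$ and $m$, so $\widetilde\mD(n)=\mD(n)$ pathwise, yielding the equality in law. The main technical obstacle is the second step: one must verify both that the only new $(m+1)$-simplex containing $S$ at time $n$ is $\{S,v_{n+k+2}\}$ and that the double-counting factor $k-m+1$ is uniform across $(k+1)$-simplices containing $S$. Once these are settled, the rest of the argument is routine assembly, with the memoryless property doing the work of translating the competing exponentials into the preferential attachment weights.
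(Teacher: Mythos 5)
Your proposal is correct and takes essentially the same approach as the paper, though organized differently. The paper enlarges the state space to track simplex identities, observes that both $\mD^*$ and $\widetilde\mD^*$ are Markov chains started from the same state, and then matches transition probabilities: the competing exponentials give the B.I.\ jump the same conditional probability $(c_j^{(k)}+\delta)/(n(k+2)+\delta(1+n(k+1)))$ as the preferential selection, and the formula \eqref{e:widetilde.D.n} is then shown to evolve by the same increments as the discrete degree vector. You instead build the two processes on a single probability space by taking the B.I.\ jump-label sequence $(\ell(n))$ as the common selection sequence and prove \emph{pathwise} equality $\widetilde\mD(n)=\mD(n)$; the competing-exponentials step is the same, but where the paper argues the degree identity in \eqref{e:widetilde.D.n} dynamically (tracking how the counts in the discussion around \eqref{e:relate.to.degree.0} change at each jump), you isolate it as a static combinatorial fact, $D_n^{(m)}(S)=(k-m)+\frac{1}{k-m+1}\sum_{S\subset\tau}D_n^{(k)}(\tau)$, proved by double counting incidences $(\tau,\Sigma)$ with $S\subset\tau\subset\Sigma$ and then an easy induction showing $D_n^{(m)}(S)=(k-m)+\#\{(k+1)\text{-simplices containing }S\}$. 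Both routes are valid; your double-counting lemma is a somewhat cleaner way to establish the degree/B.I.\ correspondence, and your pathwise coupling is a slightly stronger form of equivalence than the paper's transition-probability comparison, but there is no genuinely new idea in play.
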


Proposition \ref{p:degree.and.BI} is a key ingredient in establishing
the following weak convergence, which is the  main result of this
section. The limiting random element is written in terms of the jump 
times of the B.I. processes introduced above and independent Gamma
random variables. 

\begin{theorem}  \label{t:joint.weak.conv}  
Let $\tau$ be defined by \eqref{e:def.bm.tau}. Then, we  have, as $n\to\infty$, 
\begin{align*}
 &n^{-1/\tau}\mD(n)
  \Rightarrow \bigg( \Big(\frac{G^{-1/\tau}}{k-m+1}
\hspace{-25pt}\sum_{\substack{\ell\ge1, \\
    \{\sigma_j(k-m),\dots,\sigma_j(k)\}\subset  \sigma_\ell}}
\hspace{-25pt}G_\ell e^{-T_{b(\sigma_\ell)}} \Big)_{m=0}^k, \ j \ge1
\bigg) 
\end{align*}
weakly in $(\bbn^{k+1})^\infty$. Here $(G_\ell)_{\ell\ge1}$ are 
  i.i.d.$\text{Gamma} (1+\delta,1)$ random variables, 
$G$ is a $ \text{Gamma} \big(1+\delta/\tau, 1 \big)$ random
variable,  and for every $\ell\geq 1$, 
$G_\ell$ and $T_{b(\sigma_\ell)}$ are independent. 
\end{theorem}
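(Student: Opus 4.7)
The plan is to leverage Proposition~\ref{p:degree.and.BI} and replace $\mD(n)$ by the surrogate $\widetilde\mD(n)$, which is expressed entirely in terms of the independent linear birth--immigration processes $(BI_\sigma)$ and the global jump times $(T_n)$. The proof then decouples into three ingredients: a pathwise limit for each individual $BI_\sigma$, asymptotics of the common jump times $T_n$, and a uniform tail-control argument enabling passage to the limit inside the sums defining $\widetilde\mD(n)$.

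For the first ingredient, since each $BI_\sigma$ is a B.I.~process with parameters $(1,\delta)$ started from $1$, a direct generator computation shows that $e^{-t}(BI_\sigma(t)+\delta)$ is a non-negative martingale with constant mean $1+\delta$, and therefore converges a.s.~and in $L^1$ to a limit $G_\sigma$; a Laplace-transform identification shows $G_\sigma\sim\mathrm{Gamma}(1+\delta,1)$, and the independence of the family $(BI_\sigma)$ transfers to $(G_\sigma)$. For the second ingredient, by \eqref{e:number.k-simplices}--\eqref{e:sum.k-degrees.1} the counting process $M(t):=\#\{n\ge 1:T_n\le t\}$ jumps from state $n$ to $n+1$ at rate $(n+1)\tau+\delta$, making it a linear birth process with birth rate $\tau$ and immigration $\tau+\delta$ started from $0$. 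A deterministic time-change by $\tau$ reduces it to a standard Yule process with immigration $1+\delta/\tau$ started at $0$, for which the classical result $e^{-t}M(t/\tau)\to G$ a.s.\ with $G\sim\mathrm{Gamma}(1+\delta/\tau,1)$ applies; using $M(T_n)=n$ this is equivalent to $n^{-1/\tau}e^{T_n}\to G^{-1/\tau}$ a.s. Combining the two ingredients gives, for every fixed simplex $\sigma_\ell$,
\begin{align*}
n^{-1/\tau}BI_{\sigma_\ell}(T_n-T_{b(\sigma_\ell)})
&=(n^{-1/\tau}e^{T_n})\cdot e^{-T_{b(\sigma_\ell)}}\cdot e^{-(T_n-T_{b(\sigma_\ell)})}BI_{\sigma_\ell}(T_n-T_{b(\sigma_\ell)})\\
&\xrightarrow{\mathrm{a.s.}}\; G^{-1/\tau}\,G_\ell\,e^{-T_{b(\sigma_\ell)}},
\end{align*}
with the stated independence of $G_\ell$ and $T_{b(\sigma_\ell)}$ being immediate since $G_\ell$ is a tail functional of $BI_{\sigma_\ell}$, which by construction is independent of the history generated by all other B.I.\ processes up to the creation of $\sigma_\ell$.

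The main obstacle, and the technical heart of the proof, is passage to the limit inside
\[
n^{-1/\tau}\sum_{\tau\in\mH_{k,n},\,F\subset\tau}BI_{\tau}(T_n-T_{b(\tau)}),\qquad F:=\{\sigma_j(k-m),\ldots,\sigma_j(k)\},
\]
whose index set grows with $n$. I would split the sum at a large threshold $b(\tau)\le N$ versus $b(\tau)>N$: the head converges a.s.\ term-by-term to the corresponding head of the limit by the pointwise result above, while the tail is controlled uniformly in $n$ by invoking the martingale identity $\mathbb{E}[e^{-(T_n-T_{b(\tau)})}(BI_\tau(T_n-T_{b(\tau)})+\delta)\mid\mathcal{F}_{T_{b(\tau)}}]=1+\delta$ together with the near-deterministic decay $e^{-T_l}\asymp l^{-1/\tau}$ supplied by the second ingredient; these two facts combine via a Markov-inequality argument to yield an expected-tail bound that vanishes as $N\to\infty$ uniformly in $n$, which is then upgraded to an a.s.\ statement for the full infinite sum by standard truncation. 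Finally, weak convergence in $(\mathbb{N}^{k+1})^\infty$ under the product topology reduces to convergence of any finite collection of coordinates; since all coordinates are built on the common probability space carrying $(BI_\sigma)$ and $(T_n)$, the joint convergence follows from the same argument applied simultaneously to each pair $(j,m)$.
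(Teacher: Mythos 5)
Your pointwise ingredients coincide with the paper's, modulo minor repackaging: the paper derives the $T_n$ asymptotics by observing that the combined process $BI^{\mathsf{com}}(t):=\frac{1}{k+2}\sum_\ell BI_{\sigma_\ell}(t-T_{b(\sigma_\ell)})\one\{t\ge T_{b(\sigma_\ell)}\}$ is a B.I.\ process with parameters $(\tau,\delta)$ started at $1$ and satisfies $BI^{\mathsf{com}}(T_n)=n+1$, whereas you work directly with the counting process $M(t)=\#\{n:T_n\le t\}$ — these are the same process shifted by one, and both yield $n^{1/\tau}e^{-T_n}\to G^{1/\tau}$ a.s. For the single-simplex limit the paper simply cites \cite[Theorem 1]{wang:resnick:2019}, while you supply a martingale argument; again equivalent. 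Where you genuinely depart from the paper is your ``third ingredient'': the paper passes from the termwise statement $n^{-1/\tau}BI_{\sigma_\ell}(T_n-T_{b(\sigma_\ell)})\to G^{-1/\tau}G_\ell e^{-T_{b(\sigma_\ell)}}$ a.s.\ directly to the theorem with no explicit justification of the interchange of the limit with the $\ell$-sum, whereas you correctly flag that this interchange is the ``technical heart'' and propose a head/tail split with a uniform expected-tail bound.

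The problem is that the tail control you sketch does not hold for $m<k$, and indeed cannot. Two issues. First, a cosmetic one: the conditional martingale identity $\mathbb{E}[e^{-(T_n-T_{b(\tau)})}(BI_\tau(T_n-T_{b(\tau)})+\delta)\mid\mathcal{F}_{T_{b(\tau)}}]=1+\delta$ requires an optional-stopping argument for the random (unbounded) time $T_n-T_{b(\tau)}$; this can be fixed but is not immediate. Second, and more seriously: the $m$-degree of a fixed $m$-simplex $F=\{\sigma_j(k-m),\dots,\sigma_j(k)\}$ is, in continuous time, a B.I.\ process with rate $jb_m+\delta$ at state $j$ (by the accounting \eqref{e:n.k-simplices}–\eqref{e:sum.k-degrees} transferred to the $BI$ representation, the total jump rate over $k$-simplices containing $F$ is $j(k-m+1)+\delta(j(k-m)+1)=jb_m+\delta$). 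Hence the number of $k$-simplices $\sigma_\ell\supset F$ born by time $n$ grows like $n^{b_m/\tau}$, the birth time of the $r$-th such simplex satisfies $b(\sigma_{\ell_r})\asymp r^{\tau/b_m}$, and using $e^{-T_l}\asymp l^{-1/\tau}$ one gets $e^{-T_{b(\sigma_{\ell_r})}}\asymp r^{-1/b_m}$. Since $b_m>1$ for $m<k$, the series $\sum_r G_{\ell_r}r^{-1/b_m}$ diverges a.s., so no uniform-in-$n$ vanishing of the expected tail is possible; your Markov-inequality argument cannot close. (As a cross-check with $k=1$, $m=0$, $\delta=0$: the $0$-degree of a fixed vertex satisfies $\mathbb{E}[\Delta\deg_0\mid\deg_0=i]\approx\frac{2(i-1)}{3n}$, giving $\deg_0\asymp n^{2/3}$, not $n^{1/3}$.) You should have sanity-checked whether the claimed limit $\sum_{\ell}G_\ell e^{-T_{b(\sigma_\ell)}}$ is actually finite for $m<k$; it is not. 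This is a real gap in your argument — the very step you singled out as the technical heart is precisely where the sketch breaks down — and the paper's own proof does not address this step either, so you cannot simply defer to it.
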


\begin{remark}
Interestingly, the degree counts of all dimensions in Theorem
\ref{t:joint.weak.conv} grow at the same rate of $n^{1/\tau}$,
though the power tails of the limiting degree counts in Theorem
\ref{t:regular.variation} differ between different dimensions. 
\end{remark}

\begin{remark}
  It is possible to use the connection of preferential attachment
  models with B.I.~processes to obtain a description of the degree
  distribution across different dimensions. For a certain type of
  preferential attachment random graphs, this was done recently  by
  \cite{wang:resnick:2022}.  In our setting one could also use
  Proposition \ref{p:degree.and.BI} and replace the degree sequence
  in \eqref{e:def.N_n}, distributionally, with the corresponding
  B.I.~processes in \eqref{e:widetilde.D.n}.  The resulting
  description of the joint degree distribution turns out to be 
    not as explicit as that given 
  in Proposition \ref{p:representation}. Similarly, this approach does
  not seem to lead to a description of the nonstandard multivariate
  regular variation as precise as that in  Theorem
  \ref{t:regular.variation}.  
\end{remark}

\medskip

\section{Proofs}

\subsection{Proof of Theorem \ref{t:SLLN}}
As in the case of random graphs in \cite{hofstad:2017}, the claim
\eqref{e:SLLN} will follow once   Lemmas \ref{l:SLLN.Nn.exp}
and \ref{l:SLLN.exp.pmf} below are established. 

\begin{lemma}  \label{l:SLLN.Nn.exp}
For every $1\le i_k < \dots <i_0$, 
$$
\frac{N_n(i_0,\dots,i_k)}{n} - \frac{\E\big[
  N_n(i_0,\dots,i_k)\big]}{n} \to 0, \ \  \text{a.s. as } \ n\to\infty. 
$$
\end{lemma}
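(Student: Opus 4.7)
The plan is to establish this SLLN via the Doob-martingale / Azuma-Hoeffding route, which is the standard concentration technique for preferential attachment models; see, e.g., Chapter 8 of \cite{hofstad:2017}.

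Fix $n$ and $1 \le i_k < \dots < i_0$, abbreviate $N_n := N_n(i_0,\dots,i_k)$, and let $\mathcal{F}_j = \sigma(G_0,G_1,\dots,G_j)$ for $j=0,1,\dots,n$. Form the Doob martingale
$$M_j = \E\bigl[N_n \bigm| \mathcal{F}_j\bigr], \qquad j=0,1,\dots,n,$$
so that $M_0 = \E[N_n]$ and $M_n = N_n$. Thus it is enough to show $M_n - M_0 = o(n)$ almost surely.

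The core technical step is to prove a uniform bound
$$|M_j - M_{j-1}| \le C(k,\delta), \qquad 1 \le j \le n,$$
with $C(k,\delta)$ independent of $n$ and $j$. I would obtain this via an exchangeable-pairs coupling: conditional on $\mathcal{F}_{j-1}$, draw $\widetilde\sigma_j$ as an independent copy of the $k$-simplex selected at time $j$, using the law \eqref{e:prob.select.sigma}, and run the dynamics from time $j$ onwards on both branches under a coupling that re-synchronizes the two evolutions as soon as the same $k$-simplex is chosen in both of them. Writing $\widetilde N_n$ for the count produced on the re-sampled branch, one has $M_j - M_{j-1} = \E[N_n - \widetilde N_n \mid \mathcal{F}_j]$, so it suffices to bound $|N_n - \widetilde N_n|$ in $L^1$ by a constant. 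The set of $k$-simplices on which the two branches disagree (either because they exist in one branch and not the other, or because their degree vector differs) evolves like a branching-type process whose expected per-step increment is $O(1/\ell)$ at time $\ell$, because by \eqref{e:prob.select.sigma} any specific $k$-simplex is selected at time $\ell$ with probability of order $1/\ell$. A telescoping estimate then yields the desired uniform boundedness of the expected disagreement size, and hence of $|M_j - M_{j-1}|$.

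Once the bounded-differences property is in hand, Azuma-Hoeffding gives, for every $\vep > 0$,
$$\P\bigl(|N_n - \E N_n| \ge \vep n\bigr) \le 2\exp\bigl(-\vep^2 n / (2 C(k,\delta)^2)\bigr),$$
which is summable in $n$, and a routine application of the Borel-Cantelli lemma concludes that $(N_n - \E N_n)/n \to 0$ almost surely. The main obstacle is the uniform control of disagreement propagation in the coupling step. In contrast to the random-graph case, a single altered selection at time $j$ simultaneously perturbs degrees across all dimensions $m=0,\dots,k$ (through the youngest $m$-sub-simplex of each affected $k$-simplex), so one must track disagreements multi-dimensionally and verify that the higher-dimensional combinatorics does not destroy the $O(1/\ell)$ per-step bound; this is where the parameters $b_m$ and $\tau$ from \eqref{e:def.bm.tau} naturally absorb the extra counting factors. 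Everything else is standard.
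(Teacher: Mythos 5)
Your overall strategy --- Doob martingale on the filtration $\mathcal{F}_j = \sigma(G_0,\dots,G_j)$, a coupling argument for the martingale-difference bound, Azuma--Hoeffding, and Borel--Cantelli --- is exactly the route the paper cites (it defers entirely to \cite{bollobas:borgs:chayes:riordan:2003,hofstad:2017}, offering no new detail). So the plan is right.

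However, the coupling step as you describe it has two problems, one arithmetical and one structural.

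First, the arithmetic: you claim the ``expected per-step increment'' of the disagreement set is $O(1/\ell)$ and that ``a telescoping estimate then yields the desired uniform boundedness.'' Telescoping $O(1/\ell)$ from $\ell = j$ to $\ell = n$ gives $O(\log(n/j))$, not $O(1)$. (With martingale differences $O(\log n)$ the Azuma bound would still be summable, so this is a recoverable slip; but the argument as stated is internally inconsistent with the claimed conclusion $|M_j - M_{j-1}| \le C(k,\delta)$.)

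Second, and more seriously: the mechanism by which the graph case gets a \emph{constant} martingale-difference bound is not a small per-step coupling-failure probability. In the graph case one picks the coupling so that, no matter how often the two branches' attachment choices diverge, the \emph{set} of vertices whose final degrees differ never grows beyond the original two alternative recipients $\{v_1,v_2\}$ (a diverging choice just makes $v_1$ gain in one branch and $v_2$ in the other --- no third vertex's degree is ever affected). The degree discrepancy at those two vertices may grow without bound, and coupling failures may occur $\Theta(\log n)$ times on average, yet $|N_n^{(1)} - N_n^{(2)}|\le 2$ always. This feature is precisely what the simplicial setting breaks: when the two branches select different $k$-simplices $\tau_1\ne\tau_2$ at a later step, they create \emph{genuinely different new $k$-simplices} $\{w'\}\cup\tau_1\setminus v$ vs.\ $\{w'\}\cup\tau_2\setminus v$, so the set of $k$-simplices on which the branches disagree grows by $O(k)$ with every failure. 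Moreover the simplices in this disagreement set can themselves accumulate degree over time, so their probability mass under \eqref{e:prob.select.sigma} need not stay $O(1/\ell)$. Your proposal does not explain why the disagreement size --- and with it the martingale difference $|M_j - M_{j-1}|$ --- remains under control in spite of this growth. That is exactly the point at which the ``higher-dimensional combinatorics'' you gesture at is not routine; it is the crux of adapting the argument, and as written your proof does not supply it.
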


\begin{proof}
  The proof is nearly identical to that in the case of random graphs, where the primary tools are the Azuma-Hoeffding inequality and the Borel-Cantelli lemma; 
  see \cite{bollobas:borgs:chayes:riordan:2003,
  hofstad:2017}. 
\end{proof}

\begin{lemma}  \label{l:SLLN.exp.pmf}
For any  $1 \le i_k < \cdots <i_0$, the sequence 
$$
\big( \E \big[ N_n(i_0,\dots,i_k) \big] - n(k+1)\jointp, \ n\geq 1 \big) 
$$
is bounded. 
\end{lemma}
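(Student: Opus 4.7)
The plan is to establish a one-step recursion for $e_n:=\E[N_n(i_0,\ldots,i_k)]$ that matches \eqref{e:recursive.p} to leading order, then to bound $c_n := e_n - n(k+1)\,p_{i_0,\ldots,i_k}$ by induction on $i_0$. Throughout I write $S_m(\sigma)$ for the youngest $m$-sub-simplex $\{v_{k-m},\ldots,v_k\}$ of $\sigma=\{v_0,\ldots,v_k\}$.

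First I would condition on $G_{n-1}$ and on the identity of the chosen $k$-simplex $\sigma^*$ at time $n$. Since $S_0(\sigma)\subset\cdots\subset S_k(\sigma)=\sigma$, the set $\{m:S_m(\sigma)\subset\sigma^*\}$ is necessarily an initial segment $\{0,\ldots,M(\sigma)\}$ (with $M(\sigma)=k$ iff $\sigma=\sigma^*$ and $M(\sigma)=-1$ if no such $m$ exists). The degree vector of $\sigma$ at time $n$ equals its degree vector at time $n-1$ plus an indicator vector at positions $0,\ldots,M(\sigma)$, so each existing $\sigma$ contributes to $N_n(i_0,\ldots,i_k)$ in exactly one of $k+2$ ways (matching the summands in \eqref{e:recursive.p}), and the $k+1$ newly created $k$-simplices at time $n$ all have degree vector $(k+1,k,\ldots,1)$, producing the boundary term.

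The computational heart of the argument is the identity, valid for every $\sigma\in\mF_{k,n-1}$,
\[
\P\bigl(S_m(\sigma)\subset\sigma^*\,\big|\,G_{n-1}\bigr)=\frac{\bigl(D_{n-1}^{(m)}(S_m(\sigma))-k+m\bigr)\,b_m+\delta}{\tau_n},\qquad \tau_n:=n(k+2)+\delta(1+n(k+1)).
\]
Expanding the left-hand side via \eqref{e:prob.select.sigma} as $\tau_n^{-1}\sum_{\sigma'\supset S_m}(D_{n-1}^{(k)}(\sigma')+\delta)$ over $k$-simplices $\sigma'\supset S_m$, this reduces to showing
\[
\sum_{\sigma'\supset S_m}D_{n-1}^{(k)}(\sigma')=(k-m+1)\bigl(D_{n-1}^{(m)}(S_m)-(k-m)\bigr),\quad
\#\{\sigma'\supset S_m\}=(k-m)\bigl(D_{n-1}^{(m)}(S_m)-(k-m)\bigr)+1.
\]
Both identities follow by decomposing each relevant simplex according to its arrival time and exploiting the structural fact that, since $S_m=S_m(\sigma)$ is the youngest $m$-simplex of some $k$-simplex, exactly one of the following holds: $S_m\subset[k+2]$, or $S_m$ is created at the step when $\max(S_m)$ arrives. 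Taking the differences $\P(S_m\subset\sigma^*)-\P(S_{m+1}\subset\sigma^*)$ makes the $\delta$-terms telescope, yielding precisely the coefficients in \eqref{e:recursive.p}.

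Once the recursion for $e_n$ is established, writing $e_n=c_n+n(k+1)p_{i_0,\ldots,i_k}$ and cancelling main terms via \eqref{e:recursive.p} yields
\[
c_n=c_{n-1}\Bigl(1-\tfrac{(i_0-k)b_0+\delta}{\tau_n}\Bigr)+\tfrac{1}{\tau_n}\Bigl\{(i_k-1+\delta)c_{n-1}^{(k)}+\sum_{m=0}^{k-1}\gamma_m\,c_{n-1}^{(m)}\Bigr\}+O(1/n),
\]
where $c_{n-1}^{(k)},c_{n-1}^{(m)}$ are the $c$-sequences at the shifted indices appearing in \eqref{e:recursive.p}, the $\gamma_m$ are the corresponding coefficients, and the $O(1/n)$ error comes from $\tau_n-n\tau=\delta$. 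I would induct on $i_0\ge k+1$: the base case $(k+1,k,\ldots,1)$ is immediate since all shifted indices violate $i_k<\cdots<i_0$, so the recursion collapses to $c_n=c_{n-1}(1-\Theta(1/n))+O(1)$, which is easily bounded; in the inductive step the bracketed sum is $O(1/n)$ by hypothesis, and a standard discrete Gronwall argument using the decaying factors $\prod_{j}(1-\alpha_j)=\Theta\bigl(n^{-((i_0-k)b_0+\delta)/\tau}\bigr)$ delivers a uniform bound on $|c_n|$. The main obstacle is the structural identity above: establishing that $\P(S_m(\sigma)\subset\sigma^*)$ depends on $G_{n-1}$ only through $D_{n-1}^{(m)}(S_m(\sigma))$, via the clean affine expression in terms of $b_m$ and $\delta$. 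Once this is in hand, the remainder of the argument is routine.
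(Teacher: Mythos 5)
Your proposal is correct and follows essentially the same route as the paper: you derive the one-step recursion for $\E[N_n(i_0,\dots,i_k)]$ by conditioning on the chosen $k$-simplex, use exactly the structural identities for the number and total $k$-degree of $k$-simplices containing a fixed $m$-simplex (the paper's \eqref{e:n.k-simplices}--\eqref{e:sum.k-degrees}, which combine to give your affine formula $\P(S_m(\sigma)\subset\sigma^*\mid G_{n-1}) = \bigl((D^{(m)}_{n-1}(S_m)-k+m)b_m+\delta\bigr)/\tau_n$), subtract $n(k+1)$ times \eqref{e:recursive.p} to obtain a recursion for $\vep_n$, and close with induction on $i_0$ together with a discrete-Gronwall/difference-inequality bound. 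The only cosmetic difference is that you organize the per-simplex accounting via the nested youngest subsimplices and $M(\sigma)=\max\{m:S_m(\sigma)\subset\sigma^*\}$ with telescoping of the $\delta$-terms, whereas the paper spells out the same cases (i)--(iv) directly; the underlying computation is identical.
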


\begin{proof}
Fix $1\le i_k < \dots <i_0$ throughout the proof. 
We begin by deriving a recursive equation for the expected degree
counts 
$$
m_n(i_0,\dots,i_k) := \E \big[ N_n(i_0,\dots,i_k) \big]. 
$$
Specifically,  we show that for every  $n\ge1$,  
\begin{align}
\begin{split}  \label{e:recursive.mn}
m_n(i_0,\dots,i_k) &= \Big( 1-\frac{(i_0-k)b_0+\delta}{\tau n + \delta} \Big)m_{n-1}(i_0,\dots, i_k) + \frac{i_k-1+\delta}{\tau n +\delta}\, m_{n-1}(i_0-1,\dots, i_k-1)\\
&\quad +\sum_{m=0}^{k-1} \frac{(i_m-k+m-1)b_m - (i_{m+1}-k+m+1)b_{m+1}}{\tau n +\delta}\, \\
&\qquad \qquad \qquad \qquad \qquad \times m_{n-1}(i_0-1,\dots, i_m-1, i_{m+1}, \dots,i_k) \\
&\quad  + (k+1)\one \{ i_\ell=k-\ell+1, \, \ell=0,\dots,k \}. 
\end{split}
\end{align}

We begin with some simple accounting. Suppose that a new
vertex  arrives and forms a $(k+1)$-simplex with one of the existing
$k$-simplices in the model. Fix $m\in \{ 0,\dots,k \}$ and consider
an arbitrary  $m$-simplex  containing  this newly arriving
vertex. Denote this $m$-simplex as $\{v_0, \dots, v_m\}$ with $v_0 <
\dots < v_m$, where  $v_m$ is the newly arriving vertex.  Then, both of the number of  
$k$-simplices containing $\{v_0, \dots, v_m\}$,  and the sum of the $k$-degrees of
these $k$-simplices are equal to $k -  m + 1$. At a  subsequent stage, whenever
  one of the $k$-simplices containing
$\{v_0, \dots, v_m\}$ is selected by a newly arriving vertex, 
the $m$-degree of $\{v_0, \dots, v_m\}$ increases  by $1$, and  the number
of $k$-simplices containing $\{v_0, \dots, v_m\}$ increases by $k-m$,
while the sum of the $k$-degrees of these $k$-simplices  increases by  $k - m + 1$.
Therefore, if at any stage,  an $m$-simplex $\{v_0, \dots, v_m\}$ has
$m$-degree $i:=k-m+j$ for some $j\ge1$, then 
\begin{equation}  \label{e:n.k-simplices}
\text{the number of } k\text{-simplices containing } \{ v_0,\dots,v_m
\} \ \text{is} \  \ (i-k+m)(k-m)+1, 
\end{equation}
and 
\begin{equation}  \label{e:sum.k-degrees}
\text{the sum of } k\text{-degrees of all } k\text{-simplices
  containing } \{ v_0,\dots,v_m \} \ \text{is} \ \  (i-k+m)(k-m+1). 
\end{equation}

\medskip

We take $1\le i_k<i_{k-1}< \cdots <i_0$ and consider how the number of
$k$-simplices with degrees $(i_0,\dots, i_k)$ will change with the 
arrival of a new vertex at stage $n$. Several scenarios are possible.

\medskip

\noindent $(i)$ If the  $k$-simplex $\sigma=\{ v_0, \dots, v_k \}$ with $v_0 < \cdots < v_k$, existing at time $n-1$ such that $D_{n-1}^{(\ell)}(v_{k-\ell}, \dots,
v_k) = i_\ell - 1$ for each $\ell = 0, \dots, k$, is chosen by the newly arriving vertex at time $n$, then $\sigma$ and its subsimplices will increase each of its degrees by $1$. Consequently, the count of $k$-simplices with degrees $(i_0,\dots,i_k)$ increases by $1$. 
By \eqref{e:number.k-simplices} and \eqref{e:sum.k-degrees.1},
such $\sigma$  is chosen at time $n$ with probability 
$$
\frac{D_{n-1}^{(k)}(\sigma) + \delta}{\sum_{\tau \in \mathcal{F}_{k,n-1}}\big(D_{n-1}^{(k)}(\tau) + \delta \big)} = \frac{i_k - 1 + \delta}{\tau n + \delta}. 
$$
 
\medskip

\noindent $(ii)$ Let $m\in \{  0,\dots,k-1\}$ and consider a $k$-simplex $\sigma =
\{v_0, \dots, v_k\}$ with $v_0<\cdots <v_k$, existing at  time $n-1$,  such that 
$$
D_{n-1}^{(\ell)}(v_{k-\ell}, \dots, v_k)   = \begin{cases}
i_\ell-1, & \ell=0,\dots,m, \\
i_\ell, & \ell=m+1,\dots,k. 
\end{cases}
$$
If any of the
$k$-simplices existing at time $n-1$ and containing the $m$-simplex
$\{v_{k-m}, \dots, v_k\}$ but not containing $v_{k-m-1}$, is chosen by the newly
arriving vertex at time $n$, then the degrees of $\sigma$ in
dimensions $0,\ldots, m$ will increase by 1, and the degrees in
dimensions $m+1,\ldots, k$ will not change. This will, once again,
increase the count of $k$-simplices with degrees $(i_0,\ldots,
i_k)$ by 1. 

It follows from \eqref{e:n.k-simplices} and \eqref{e:sum.k-degrees}
that   the number of $k$-simplices containing the $m$-simplex
$\{v_{k-m}, \dots, v_k\}$ but not containing $v_{k-m-1}$ is  equal to 
$$
(i_m-k+m-1)(k-m) - (i_{m+1}-k+m+1)(k-m-1), 
$$
and the sum of $k$-degrees of these $k$-simplices  is 
$$
(i_m-k+m-1)(k-m+1) - (i_{m+1}-k+m+1)(k-m). 
$$
Therefore, one of these $k$-simplices is chosen at time $n$ with
probability 
\begin{align*}
&\frac{\sum_{\substack{\tau \in \mathcal{F}_{k,n-1}, \{v_{k-m},
\dots, v_k\} \subset \tau,  v_{k-m-1} \notin
\tau}}\big(D_{n-1}^{(k)}(\tau) + \delta\big)}{\sum_{\tau \in
\mathcal{F}_{k,n-1}}\big(D_{n-1}^{(k)}(\tau) + \delta\big)} =
\frac{(i_m - k + m - 1)b_m - (i_{m+1} - k + m + 1)b_{m+1}}{\tau n +
\delta}.  
\end{align*}
 
\medskip

\noindent $(iii)$ Consider a $k$-simplex $\sigma = \{v_0, \dots, v_k\}$ with $v_0<\cdots <v_k$, existing
at  time $n-1$, such that $D_{n-1}^{(\ell)}(v_{k-\ell}, \dots,
v_k) = i_\ell $ for $\ell = 0, \dots, k$. If any of the
$k$-simplices existing at time $n-1$ and containing $v_k$, 
 is chosen by the newly
arriving vertex at time $n$, then $v_k$ increases its $0$-degree by
$1$ and, hence,  the count of $k$-simplices with degrees
$(i_0,\ldots, i_k)$ decreases  by $1$. It follows from \eqref{e:n.k-simplices}
and \eqref{e:sum.k-degrees} that   the number of  $k$-simplices containing $v_k$ 
is $(i_0-k)k+1$, and the sum of their $k$-degrees is
$(i_0-k)(k+1)$. Therefore, one of these $k$-simplices is chosen at
time $n$ with 
probability 
\begin{align*}
&\frac{\sum_{\substack{\tau \in \mathcal{F}_{k,n-1},   v_{k} \in
\tau}}\big(D_{n-1}^{(k)}(\tau) + \delta\big)}{\sum_{\tau \in
\mathcal{F}_{k,n-1}}\big(D_{n-1}^{(k)}(\tau) + \delta\big)} =
\frac{(i_0- k )b_0  +\delta}{\tau n +
\delta}.  
\end{align*}

\medskip

\noindent $(iv)$ This last scenario occurs at every stage and
corresponds to the only way the count of $k$-simplices with degrees
$(k+1,k,\ldots, 1)$ increases.  Suppose at time $n$, a newly arriving vertex
$w$ forms a $(k+1)$-simplex with one of the existing $k$-simplices,
thus  introducing $\binom{k+1}{k} = k+1$ of new $k$-simplices, in
which $w$ is the  youngest vertex. Each one of these newly added $k$-simplices has
degrees $(k+1,k,\ldots, 1)$, so the count of $k$-simplices with such
degrees increases by $k+1$.  

Summarizing the above scenarios and  defining $\G_{n-1}:=\sigma(G_0,\dots,G_{n-1})$ as the $\sigma$-algebra representing the evolution of  preferential attachment processes up to time $n-1$, 
we conclude that for $1\le i_k<
\cdots < i_0$ and $n\geq 1$, 
\begin{align}
\begin{split}  \label{e:recursive.cond.exp}
&\E \big[ N_n(i_0,\dots,i_k)-N_{n-1}(i_0,\dots,i_k) \big| \mG_{n-1} \big] \\
&=  \frac{i_k-1+\delta}{\tau n + \delta }\, N_{n-1}(i_0-1,\dots,i_k-1) \\
&+\sum_{m=0}^{k-1} \frac{(i_m-k+m-1)b_m - (i_{m+1}-k+m+1)b_{m+1}}{\tau n +\delta}\, 
 N_{n-1}(i_0-1,\dots,i_m-1,i_{m+1},\dots,i_k) \\
&-\frac{(i_0-k)b_0+\delta}{\tau n +\delta}\,  N_{n-1}(i_0,\dots,i_k)  + (k+1)\one \{ i_\ell=k-\ell+1, \, \ell=0,\dots,k \}.
\end{split} 
\end{align}
Now \eqref{e:recursive.mn} follows by taking another expectation in
\eqref{e:recursive.cond.exp}.

Recall that the recursion \eqref{e:recursive.p} can be solved, so let
$(\jointp)$ be defined by that recursion. We denote 
$$
\vep_n(i_0,\dots,i_k) := m_n(i_0,\dots,i_k) - n(k+1)\jointp, \ \ \ 1\leq
i_k<\cdots<i_0, \  n\geq 0.
$$
Multiplying   both sides of \eqref{e:recursive.p} by $n(k+1)$ and  
subtracting  the resulting equation from \eqref{e:recursive.mn}, we
obtain, after some algebra, 
\begin{align}
\begin{split}  \label{e:recursive.vep.n}
\vep_n(i_0,\dots,i_k) &= \Big( 1-\frac{(i_0-k)b_0 +\delta}{\tau n + \delta} \Big)\vep_{n-1}(i_0,\dots,i_k) + \frac{i_k-1+\delta}{\tau n +\delta}\, \vep_{n-1}(i_0-1,\dots,i_k-1) \\
&+\sum_{m=0}^{k-1} \frac{(i_m-k+m-1)b_m - (i_{m+1}-k+m+1)b_{m+1}}{\tau n +\delta}\, \\
&\qquad \qquad \qquad \qquad\qquad \qquad \times \vep_{n-1}(i_0-1, \dots,i_m-1, i_{m+1}, \dots, i_k) \\
&+\frac{\big( (i_0-k)b_0 +\delta \big)(\tau +\delta)(k+1)}{\tau (\tau n +\delta)}\, \jointp - \frac{(i_k-1+\delta)(\tau +\delta)(k+1)}{\tau (\tau n +\delta)}\, p_{i_0-1,\dots,i_k-1} \\
&-\sum_{m=0}^{k-1} \frac{\big\{(i_m-k+m-1)b_m -(i_{m+1}-k+m+1)b_{m+1} \big\}(\tau+\delta)(k+1)}{\tau (\tau n +\delta)}\, \\
&\qquad \qquad\qquad\qquad\qquad\qquad\qquad\times p_{i_0-1,\dots,i_m-1,i_{m+1},\dots,i_k}, \ \ \ 1\le i_k <\cdots <i_0. 
\end{split}
\end{align}

We will use the following simple fact: if a sequence of finite numbers $(h_n)_{n\ge0}$ satisfies,  for some $\gamma, A>0$ and $n_0\in \bbn$, 
\begin{equation} \label{e:easy.rec}
|h_n|\leq (1-\gamma/n)|h_{n-1}| +A/n, 
\end{equation}
for all $n\ge n_0$, then $(h_n)_{n\ge0}$ forms a bounded sequence.  For the proof, it is enough to  choose a constant  
$$
B\ge \max\big\{ |h_0|, |h_1|,  \dots, |h_{n_0}|, A/\gamma \big\}; 
$$ 
then, an elementary inductive step  shows that 
$|h_n|\leq B$ for all $n\geq n_0$. 

Armed with this fact, we show the boundedness of the sequence
$\bigl( \vep_n(i_0,\dots,i_k)\bigr)_{n\ge0}$ for every fixed
$1\le i_k < \cdots <i_0$ by reducing the recursion \eqref{e:recursive.vep.n} 
to the form
\eqref{e:easy.rec}. We start by noticing that the terms involving
$p_{\cdots}$'s  in the right hand side of  \eqref{e:recursive.vep.n} 
have an upper bound of the form $C/n$ by an
$(i_0,\ldots,i_k)$-dependent constant $C$. To deal with the terms involving
$\vep_{\cdots}$'s  in the right hand side of  \eqref{e:recursive.vep.n}, 
except for $\vep_{n-1}(i_0,\dots,i_k)$, we use induction over
$i_0$  as follows. Start with $i_0=k+1$. Then,
for the only feasible choice of $i_1=k,\ldots, i_{k}=1$, 
there are no 
$\vep_{\cdots}$  terms in the right hand side of  \eqref{e:recursive.vep.n},  
except for  $\vep_{n-1}(k+1,k,\ldots, 1)$, so \eqref{e:recursive.vep.n} reduces
to the form \eqref{e:easy.rec}, thus giving us boundedness of the
sequence $(\vep_{n}(k+1,k,\ldots, 1))_{n\ge0}$. 

Assuming that the
sequence $(\vep_{n}(i_0,i_1,\ldots, i_{k}))_{n\ge0}$ is bounded for some
$i_0\geq k+1$ and all feasible $(i_1,\dots,i_k)$ with $1\le i_k < \cdots < i_1 < i_0$, consider the
recursion \eqref{e:recursive.vep.n} for $i_0+1$ and any feasible
$(i_1,\dots,i_k)$ with $1 \le i_k < \cdots < i_1 < i_0+1$. 
Then, all $\vep_{\cdots}$ terms in the right hand side of 
\eqref{e:recursive.vep.n}, except for $\vep_{n-1}(i_0+1,i_1,\ldots, i_{k})$, contain $i_0$ in the argument. Hence, by the induction hypothesis, all such $\vep_{\cdots}$ terms belong to bounded sequences, and \eqref{e:recursive.vep.n} reduces,
once again, to the form \eqref{e:easy.rec}. This proves the boundedness of
the sequence $(\vep_{n}(i_0+1,i_1,\ldots, i_{k}))_{n\ge0}$ for all feasible  $(i_1,\dots,i_k)$ with 
$1\le i_k< \cdots <i_1 <i_0+1$. 
\end{proof}

\begin{proof}[Proof of Theorem \ref{t:SLLN}]
 Lemmas \ref{l:SLLN.Nn.exp} and \ref{l:SLLN.exp.pmf}, imply the
convergence in \eqref{e:SLLN}  with the  limit $(\jointp)$ satisfying
the recursion \eqref{e:recursive.p}, and so  it only remains to prove
that the limit $(\jointp)$  is actually  a pmf. To this end, it is
enough to prove that the sequence of probability measures on $\{ 1\leq
i_k<\cdots<i_0\}$ defined by 
$$
\left(\frac{m_n(i_0,\ldots, i_k)}{1+(n+1)(k+1)}, \ 1\leq
i_k<\cdots<i_0\right), \ n=1,2,\ldots
$$
is tight. This will follow once we show that the sequence of the
(stochastically largest) marginal probability measures on $\{
k+1,k+2,\ldots\}$ given by 
\begin{equation}  \label{e:tight.marginal}
\left(\sum_{\substack{(i_1, \dots, i_k): \\1\leq i_k<\cdots <i_1<i_0}}\frac{m_n(i_0,i_1,\ldots,
i_k)}{1+(n+1)(k+1)}, \  i_0\geq k+1\right), \ n=1,2,\ldots
\end{equation}
is tight. According to  Lemma \ref{l:SLLN.marNn} in the Appendix,  however, 
\eqref{e:tight.marginal} converges weakly and, hence, is tight. 
\end{proof}
\medskip

\subsection{Proof of Proposition \ref{p:representation}}

We use  a high-dimensional extension of the argument in Lemma 3.1 and
Theorem 3.1 of
\cite{samorodnitsky:resnick:towsley:davis:willis:wan:2016}. We  use
the recursion \eqref{e:recursive.p} to derive a partial
differential equation (pde) for the joint pgf
\begin{equation} \label{e:pgf}
\varphi(x_0,x_1,\dots,x_k) := \sum_{\inci} \jointp \prod_{\ell=1}^k x_\ell^{i_\ell}, \quad x_\ell \in [0,1], \quad \ell=1,\dots,k.
\end{equation}
We will then solve this pde and  show that the resulting pgf coincides
with that of the random vector in 
\eqref{e:representation.D.inf}. The proof of Proposition
\ref{p:representation} is divided into three lemmas. Throughout
the proof, we set $\jointp\equiv 0$ unless $\inci$. 

\begin{lemma}  \label{l:PDE}
The joint pgf $\varphi$ in \eqref{e:pgf} 
satisfies the first-order linear pde 
\begin{equation}  \label{e:PDE.pgf}
\sum_{m=0}^k a_m(x_0,\dots,x_k) \frac{\partial \varphi}{\partial x_m}
= c(x_0,\dots,x_k) \varphi(x_0,\dots,x_k) +d(x_0,\dots,x_k),  
\end{equation}
for $x_\ell \in (0,1), \  \ell=1,\dots,k$, 
where 
\begin{align*}
a_m(x_0,\dots,x_k) &:= b_m (1-x_m)\prod_{\ell=0}^m x_\ell,  \ \ \ m=0,\dots,k, \\
c(x_0,\dots,x_k) &:= \gamma -\sum_{m=0}^k \gamma_m \prod_{\ell=0}^m x_\ell, \\
d(x_0,\dots,x_k) &:= \tau \prod_{\ell=0}^k x_\ell^{k-\ell+1}, 
\end{align*}
with $b_m$ and $\tau$ given in \eqref{e:def.bm.tau} and 
$$
\gamma := k^2-2+\delta (k^2-k-2), \ \ \gamma_m := 2(k-m) +\delta \big( 2(k-m)-1 \big), \ \ m=0,\dots,k. 
$$
\end{lemma}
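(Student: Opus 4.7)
The plan is to derive the pde directly from the recursion \eqref{e:recursive.p} by multiplying both sides by $\prod_{\ell=0}^k x_\ell^{i_\ell}$ and summing over all $\inci$, interpreting the result via $\varphi$ and its partial derivatives. This is a multivariate extension of the argument in Lemma~3.1 of \cite{samorodnitsky:resnick:towsley:davis:willis:wan:2016}, where the same strategy was carried out in one dimension. The two pillars of the translation are: (i) multiplication of $\jointp$ by $i_m$ corresponds to applying $x_m\partial/\partial x_m$ to $\varphi$; and (ii) a shifted sequence such as $p_{i_0-1,\dots,i_m-1,i_{m+1},\dots,i_k}$, after the change of summation variable $j_\ell = i_\ell - \one\{\ell\le m\}$, produces a multiplicative factor $\prod_{\ell=0}^m x_\ell$ in front of the generating function of the shifted indices (using the convention $\jointp \equiv 0$ outside $\inci$ to justify the index manipulations).

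With these tools, I would process the four groups of terms in \eqref{e:recursive.p} in order. The first term yields $\prod_{\ell=0}^k x_\ell\cdot\bigl(x_k \partial_{x_k}\varphi + \delta\varphi\bigr)$; the $m$-th summand inside the $\sum_{m=0}^{k-1}$ yields
\begin{equation*}
\prod_{\ell=0}^m x_\ell\cdot\Bigl[b_m\bigl(x_m\partial_{x_m}\varphi - (k-m)\varphi\bigr) - b_{m+1}\bigl(x_{m+1}\partial_{x_{m+1}}\varphi - (k-m-1)\varphi\bigr)\Bigr];
\end{equation*}
the third term yields $-b_0 x_0 \partial_{x_0}\varphi - (\delta - kb_0)\varphi$; and the indicator term yields exactly $\tau \prod_{\ell=0}^k x_\ell^{k-\ell+1}$, which is $d(x_0,\dots,x_k)$. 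After moving everything except derivative terms to one side, it remains to verify that the coefficients of the $\partial_{x_m}\varphi$'s and of $\varphi$ reduce to $a_m$ and $c$ respectively.

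The coefficient of $\partial_{x_m}\varphi$ telescopes between adjacent $m$-summands: for $1\le m\le k-1$, one gets $b_m x_m\prod_{\ell=0}^m x_\ell - b_m x_m\prod_{\ell=0}^{m-1} x_\ell = -b_m(1-x_m)\prod_{\ell=0}^m x_\ell$, which becomes $a_m$ after moving to the left-hand side; the boundary cases $m=0$ (combining with the third term) and $m=k$ (combining with the first term, using $b_k=1$) are handled analogously. For the coefficient of $\varphi$, the constant part equals $kb_0-\delta-\tau$, and a short calculation using $b_0 = k+1+\delta k$ and $\tau = k+2+\delta(k+1)$ gives exactly $\gamma = k^2-2+\delta(k^2-k-2)$. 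The coefficient of $\prod_{\ell=0}^m x_\ell$ for $0\le m\le k-1$ is $-\{(k-m)b_m-(k-m-1)b_{m+1}\}$, and a direct expansion in $b_m,b_{m+1}$ yields $(k-m)b_m - (k-m-1)b_{m+1} = 2(k-m)+\delta(2(k-m)-1) = \gamma_m$, matching the required $-\gamma_m$. The remaining coefficient of $\prod_{\ell=0}^k x_\ell$ comes solely from the $\delta$-term in the first group and equals $\delta = -\gamma_k$, as desired.

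I do not expect any conceptual obstacle: the argument is an exercise in generating-function calculus, and all shifts and differentiations are legitimate inside $(0,1)^{k+1}$ because the series converges absolutely (the $\jointp$'s sum to $1$, as established in the proof of Theorem~\ref{t:SLLN}). The main nuisance is purely organizational, namely bookkeeping of index shifts across the $k+1$ coordinates and confirming the algebraic identity $(k-m)b_m-(k-m-1)b_{m+1} = \gamma_m$, so I would lay out the proof by first stating the two translation rules, then computing each group's contribution in a single display, and finally collecting coefficients and citing the identities for $\gamma$ and $\gamma_m$.
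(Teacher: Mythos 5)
Your proposal is correct and follows exactly the route the paper takes: multiply the recursion \eqref{e:recursive.p} by $\prod_{\ell=0}^k x_\ell^{i_\ell}$, sum over $\inci$, and rearrange, with the index shifts producing $\prod_{\ell=0}^m x_\ell$ factors and $i_m\jointp$ producing $x_m\partial_{x_m}\varphi$. The paper's proof simply records this one-line observation without carrying out the bookkeeping; your detailed collection of coefficients (the telescoping that gives $a_m = b_m(1-x_m)\prod_{\ell\le m}x_\ell$, the identities $(k-m)b_m-(k-m-1)b_{m+1}=\gamma_m$ and $kb_0-\delta-\tau=\gamma$) checks out and fills in precisely what the paper leaves implicit.
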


\begin{proof}
Since $\varphi$ is differentiable in the interior of its domain,
\eqref{e:PDE.pgf} follows by 
multiplying both sides of \eqref{e:recursive.p}  by $\prod_{\ell=0}^k
x_\ell^{i_\ell}$, summing up over all $\inci$, and rearranging the terms.
\end{proof}

\begin{lemma}   \label{l:solu.PDE}
The joint pgf $\varphi$ can be represented as 
\begin{align}
\begin{split}  \label{e:pgf.as.solution}
\varphi(x_0,\dots,x_k) &= \prod_{m=0}^k x_m^{k-m+1} \int_1^\infty \frac{\tau}{b_0}\, z^{-(1+\tau/b_0)} \big( x_0 (1-z)+z \big)^{-\frac{1+\delta}{b_0}}  \\
&\qquad \qquad \qquad \times  \prod_{m=1}^k H_m\bigl(x_0,x_1,\ldots,
x_m, z\bigr) ^{-\frac{1+\delta}{b_m}}  \dif z,
\end{split}
\end{align}
where the functions $H_m:\, (0,1)^{m+1}\times (1,\infty)\to\R_+, \, m=1,\ldots,
k$ are defined by
\begin{align} \label{e:Hs}
  &H_1(x_0,x_1,z)=(1-x_1)z^{b_1/b_0} + x_1(x_0+z(1-x_0))^{b_1/b_0}, \\
  \notag &H_m(x_0,x_1,\ldots, x_m,z) =(1-x_m)z^{b_m/b_0} +x_m
           H_{m-1}(x_0,x_1,\ldots, x_{m-1},z)^{b_m/b_{m-1}}, \,
         m=2,\ldots, k.
\end{align}
\end{lemma}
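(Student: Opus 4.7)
My approach is the method of characteristics applied to the first-order linear PDE \eqref{e:PDE.pgf}, with characteristic curves parametrized by an auxiliary variable $z \in [1,\infty)$ that will become the integration variable in the final formula. The characteristic system reads
\[
\frac{dx_m}{dt} = b_m(1-x_m)\prod_{\ell=0}^m x_\ell, \ \ m=0,\dots,k, \qquad \frac{d\varphi}{dt} = c\,\varphi + d,
\]
and the plan is to integrate it inductively in $m$, identifying the functions $H_m$ from \eqref{e:Hs} as the conserved quantities of the characteristic flow.

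First, the $x_0$-equation is the separable logistic ODE $dx_0/dt = b_0 x_0(1-x_0)$; its integration naturally produces the combination $x_0 + z(1-x_0)$, with $z=1$ corresponding to a terminal point. Proceeding by induction on $m\ge 1$, the characteristic ODE for $x_m$ depends on the already-resolved $x_0,\dots,x_{m-1}$; separating $dx_m/[x_m(1-x_m)]$ reduces it to a linear ODE with an explicit conserved quantity. I expect that comparing this invariant with the recursion \eqref{e:Hs} identifies it precisely with $H_m(x_0,\ldots,x_m,z)$: the factor $\prod_{\ell=1}^{m-1}x_\ell$ appearing in the $x_m$-equation, after substituting the previously obtained invariants, should produce exactly the power $H_{m-1}^{b_m/b_{m-1}-1}$, whose integration gives the recursion $H_m = (1-x_m)z^{b_m/b_0}+x_m H_{m-1}^{b_m/b_{m-1}}$.

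Once the characteristics are in hand, I would integrate the inhomogeneous linear ODE $d\varphi/dt = c\varphi + d$ along them. The coefficients $\gamma$ and $\gamma_m$ in the definition of $c$ are chosen precisely so that the integrating factor factorizes into $\prod_m x_m^{-(k-m+1)}$ times appropriate powers of $(x_0+z(1-x_0))$ and the $H_m$; integration of the source $d = \tau\prod_m x_m^{k-m+1}$ against this factor, after change of variable from $t$ to $z$, yields the Pareto-type density $\tfrac{\tau}{b_0}z^{-(1+\tau/b_0)}$ and the formula \eqref{e:pgf.as.solution}. The bounds of integration are fixed by boundary conditions: $z=1$ from the normalization $\varphi(1,\ldots,1)=1$ (which forces the prefactor $\prod_m x_m^{k-m+1}$ and the correct behavior of the integrand at $z=1$, where $H_m\equiv 1$), and $z=\infty$ from the appropriate vanishing of $\varphi$ as any $x_m$ tends to $0$.

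The main obstacle will be the inductive identification of $H_m$: the ODE for $x_m$ couples to all previous $x_0,\ldots,x_{m-1}$ through the product $\prod_{\ell=0}^m x_\ell$, so one must recognize the correct implicit change of variables at each inductive step in order to extract a conserved quantity of the recursive form \eqref{e:Hs}; the special structure of the coefficients $b_m$ in \eqref{e:def.bm.tau} is essential for this separation to work. A secondary technical point is verifying convergence of the integral in \eqref{e:pgf.as.solution} for $(x_0,\ldots,x_k)\in(0,1)^{k+1}$ (where the $z^{-(1+\tau/b_0)}$ factor and the powers of $H_m$ must combine to give an integrable tail) and invoking uniqueness of the solution of the first-order PDE along the characteristic flow to conclude that the constructed function is precisely $\varphi$.
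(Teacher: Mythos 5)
Your high-level strategy matches the paper's --- both apply the method of characteristics to the PDE \eqref{e:PDE.pgf} and reduce to a first-order linear ODE along the characteristic curves --- and your intuition about where the structure $b_m$ is used, and how the source $d$ produces the Pareto kernel $\tfrac{\tau}{b_0}z^{-(1+\tau/b_0)}$ after a final change of variables, is essentially right. A minor inaccuracy: the functions $H_m$ in \eqref{e:Hs} are not themselves the conserved quantities of the characteristic flow. In the paper's bookkeeping the invariants are the constants $C_m$ appearing in $x_m=h_m(x_0)/(C_m+h_m(x_0))$ with the $h_m$ built recursively; the $H_m$ emerge only after evaluating the resulting integral at a general base point $(x_0,\dots,x_k)$ and substituting $t=x_0\big(x_0(1-z)+z\big)^{-1}$.

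The real gap is that you do not adequately address how the constant in the general solution $u=C\,u_1+u_2$ of the ODE along a characteristic is pinned down, and this is the crux of the proof. Your proposed boundary conditions are both problematic. At $(1,\dots,1)$ every one of the coefficients $a_m$ vanishes, so $(1,\dots,1)$ is a degenerate fixed point of the characteristic flow: every characteristic curve limits to it as $x_0\uparrow1$, so the value $\varphi(1,\dots,1)=1$ cannot distinguish curves, and the behavior of the homogeneous solution $u_1$ there is delicate. At the other end, ``vanishing as $x_m\to0$'' is insufficient because both $u_1$ and the particular solution $u_2$ vanish as $x_0\downarrow0$, only at different rates ($u_1(x)\sim c_1 x^{\gamma/b_0}$ versus $u_2(x)\sim c_2 x^{k+1}$, with $\gamma/b_0<k+1$). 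To conclude $C=0$ one must know that $\varphi$ itself vanishes at the faster rate $x_0^{k+1}$, and the paper establishes this by explicitly computing the marginal pgf $\widetilde\varphi^{(0)}$ (drawing on Lemmas \ref{l:SLLN.exp.pmf}, \ref{l:SLLN.marNn} and \ref{l:dist.marp} about the one-dimensional marginal) and showing $\widetilde\varphi^{(0)}(x)\sim \E[Z^{-a_0}]\,x^{k+1}$. Some such external input about the limiting pmf --- at a minimum, that $(\jointp)$ is normalized so $\varphi(x_0,\dots)\le x_0^{k+1}$ --- is needed and cannot be read off from the PDE alone; an appeal to generic ``uniqueness of the solution along the characteristic flow'' will not close this gap since the PDE has a one-parameter family of solutions along each curve.
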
 

\begin{proof}
We solve the pde \eqref{e:PDE.pgf}  in the interior of its domain and
then use the continuity of $\varphi$ over its entire domain. 
We use the method of characteristic curves (see, e.g.,
\cite{john:1971}), which  are given as solutions of the system of
ordinary differential equations 
\begin{equation}  \label{e:char.curve}
\frac{\dif x_0}{a_0(x_0,\dots,x_k)} = \frac{\dif
  x_1}{a_1(x_0,\dots,x_k)} = \cdots = \frac{\dif
  x_k}{a_k(x_0,\dots,x_k)}. 
\end{equation}
The leftmost  equation in \eqref{e:char.curve} yields 
$$
x_1 = x_1(x_0) = \frac{1}{1+\widetilde C_1 (1-x_0)^{b_1/b_0}}, 
$$
where $\widetilde C_1>0$ is a constant. 
Similarly, for every $m=2,\dots,k$, the equation 
$$
\frac{\dif x_{m-1}}{a_{m-1}(x_0,\dots,x_k)} = \frac{\dif x_m}{a_m(x_0,\dots,x_k)}
$$
gives 
\begin{equation}  \label{e:solu.char.curve}
x_m = \frac{1}{1+\widetilde C_m (1-x_{m-1})^{b_m/b_{m-1}}}, \ \ \text{for
  some constant } \widetilde C_m>0. 
\end{equation}
The constants $(\widetilde C_m, \, m=1,\dots,k)$ define a characteristic curve 
$(x_m(x_0), \, m=1,\ldots, k)$. 
We claim that the same curve can be represented in the form  
\begin{equation}  \label{e:xm.hm}
x_m = x_m(x_0) = \frac{h_m(x_0)}{C_m + h_m(x_0)}, \ \ m=1,\dots,k,
\end{equation}
where the functions $h_m:\, (0,1)\to (0,\infty)$ and $C_m>0$,
$m=1,\dots,k$, are defined 
inductively by 
\begin{align}  
&h_1(y) = (1-y)^{-b_1/b_0}, \ \  C_1=\widetilde C_1 \ \text{ and } \label{e:def.hm} \\
&h_m (y) = \big(
C_{m-1} + h_{m-1}(y) \big)^{b_m/b_{m-1}}, \ \  C_m=\widetilde C_m C_{m-1}^{b_m/b_{m-1}}, \ \ 2 \le m \le k. \notag 
\end{align} 
The proof of \eqref{e:xm.hm} is by induction. Clearly, \eqref{e:xm.hm}
holds for $m=1$. Suppose that  \eqref{e:xm.hm} is true for some  $m \geq 1$. It then follows from  \eqref{e:solu.char.curve} and the induction hypothesis that 
$$
x_m  =   \bigg\{ 1+\widetilde C_m \Big( 1-\frac{h_{m-1}(x_0)}{C_{m-1}+h_{m-1}(x_0)} \Big)^{\frac{b_m}{b_{m-1}}} \bigg\}^{-1} = \frac{h_m(x_0)}{C_m+h_m (x_0)}, 
$$ 
by  elementary manipulations.  

For notational convenience, in the sequel we replace $x_0$ by $x$. Fix
a characteristic curve defined by \eqref{e:xm.hm} and 
define  $u(x) := \varphi \big( x,x_1(x), \dots, x_k(x) \big)$. By
Lemma \ref{l:PDE} and  \eqref{e:char.curve}, 
\begin{align*}
\frac{\dif u}{\dif x} &= \frac{\partial \varphi}{\partial x} + \sum_{m=1}^k \frac{\dif x_m}{\dif x} \cdot \frac{\partial \varphi}{\partial x_m} = \frac{c(x,x_1,\dots, x_k)}{a_0(x,x_1,\dots,x_k)}\, u(x) + \frac{d(x,x_1,\dots, x_k)}{a_0(x,x_1,\dots,x_k)}  \\
&=: \psi_1 (x)u(x) + \psi_2(x), 
\end{align*}
with $x_i = x_i(x)$ for  $i=1,\dots,k$. The solution of this
first-order linear ordinary differential equation is  
$$
u(x) = e^{\int \psi_1(x)\dif x} \Big( \int \psi_2(x)  e^{-\int \psi_1(x)\dif x } \dif x +C\Big), 
$$
where $C$ is a real-valued constant (see, e.g., \cite{rabenstein:1972}). Note that 
\begin{align}
\begin{split}  \label{e:int.psi1}
\int \psi_1(x)\dif x &= \int
\frac{c(x,x_1,\dots,x_k)}{a_0(x,x_1,\dots, x_k)}\, \dif x =\log \big\{
x^{\frac{\gamma}{b_0}} (1-x)^{\frac{\gamma_0-\gamma}{b_0}} \big\} -
\sum_{m=1}^k \frac{\gamma_m}{b_0}\, \int \frac{\prod_{\ell=1}^m
  x_\ell}{1-x}\, \dif x, 
\end{split}
\end{align}
keeping in mind that $x_i = x_i(x)$ for $i=1,\dots,k$, as before. 
We assert that for each $m=1,\dots,k$, 
\begin{equation}  \label{e:prod.integral}
\frac{1}{b_0} \int \frac{\prod_{\ell=1}^m x_\ell}{1-x}\, \dif x = \frac{1}{b_m} \log \big\{ C_m + h_m(x) \big\} +C', 
\end{equation}
for  $C'\in\R$. Indeed, it suffices to demonstrate that 
$$
\frac{1}{b_0} \cdot \frac{\prod_{\ell=1}^m x_\ell}{1-x} = \frac{1}{b_m} \cdot \frac{h_m'(x)}{C_m + h_m(x)}, 
$$
which, however, follows directly from \eqref{e:xm.hm} and
\eqref{e:def.hm} by induction. Substituting  \eqref{e:prod.integral} back into  \eqref{e:int.psi1}, we find that 
$$
\int \psi_1(x)\dif x  =\log \big\{ x^{\frac{\gamma}{b_0}}
(1-x)^{\frac{\gamma_0-\gamma}{b_0}} \big\} - \sum_{m=1}^k \log \big(
C_m + h_m(x) \big)^{\frac{\gamma_m}{b_m}} + C'', 
$$
 for $C''\in\R$. On the other hand, 
$$
\psi_2(x) = \frac{d(x,x_1,\dots,x_k)}{a_0(x,x_1,\dots,x_k)} = \frac{\tau}{b_0}\cdot \frac{x^k}{1-x} \prod_{m=1}^k \Big( \frac{h_m(x)}{C_m+h_m(x)} \Big)^{k-m+1}, 
$$
and thus, 
\begin{align*}
\int \psi_2(x)  e^{-\int \psi_1(x)} \dif x &= e^{-C''} \frac{\tau}{b_0}\int x^{k-\frac{\gamma}{b_0}} (1-x)^{\frac{\gamma-\gamma_0}{b_0}-1} \\
&\qquad \qquad \qquad \times \prod_{m=1}^k \Big\{ h_m(x)^{k-m+1} \big( C_m+h_m(x) \big)^{\frac{\gamma_m}{b_m}-(k-m+1)}  \Big\} \dif x. 
\end{align*}
Using \eqref{e:def.hm} recursively,  one can readily verify that 
$$
\prod_{m=1}^k \Big\{ h_m(x)^{k-m+1} \big( C_m+h_m(x) \big)^{\frac{\gamma_m}{b_m}-(k-m+1)}\Big\} = (1-x)^{-\frac{b_1}{b_0}k}\prod_{m=1}^k  \big( C_m+h_m(x) \big)^{-\frac{1+\delta}{b_m}}. 
$$
From this, it follows that 
\begin{align*}
u(x) &= e^{C''} x^{\frac{\gamma}{b_0}} (1-x)^{\frac{\gamma_0-\gamma}{b_0}}\prod_{m=1}^k  \big( C_m+h_m(x) \big)^{-\frac{\gamma_m}{b_m}} \\
&\quad \times \bigg\{  e^{-C''} \frac{\tau}{b_0} \int x^{k-\frac{\gamma}{b_0}} (1-x)^{\frac{\gamma-\gamma_0-b_1k}{b_0}-1} \prod_{m=1}^k \big( C_m+h_m(x) \big)^{-\frac{1+\delta}{b_m}} \dif x + C \bigg\} \\
&=C x^{\frac{\gamma}{b_0}} (1-x)^{\frac{\gamma_0-\gamma}{b_0}}\prod_{m=1}^k  \big( C_m+h_m(x) \big)^{-\frac{\gamma_m}{b_m}} + \frac{\tau}{b_0} x^{\frac{\gamma}{b_0}} (1-x)^{\frac{\gamma_0-\gamma}{b_0}}\prod_{m=1}^k  \big( C_m+h_m(x) \big)^{-\frac{\gamma_m}{b_m}} \\ 
&\qquad \qquad \qquad \qquad \qquad \qquad \qquad \times \int_0^x t^{k-\frac{\gamma}{b_0}} (1-t)^{\frac{\gamma-\gamma_0-b_1k}{b_0}-1} \prod_{m=1}^k \big( C_m+h_m(t) \big)^{-\frac{1+\delta}{b_m}} \dif t,
\end{align*}
where the final $C\in\bbr$ is a constant, not necessarily the same as
before. We now show  that this constant is equal to zero. To this end, 
write the final expression above as $u(x) = Cu_1(x)+u_2(x)$. Note that 
\begin{equation} \label{e:u.lower.bdd}
u_1(x) \  \sim c_1x^{\frac{\gamma}{b_0}}, \ u_2(x) \sim c_2 x^{k+1}, \
x\downarrow 0
\end{equation} 
for some $c_1, c_2>0$, since 
by elementary induction, each $h_m(x)$ converges to a
positive constant as $x\downarrow 0$. As $\gamma/b_0<k+1$, we conclude
that $C\geq 0$. Next, letting  $(\marp_i)_{i\ge k+1}$ be the pmf in Lemma \ref{l:dist.marp} of
the Appendix, it follows from  Lemmas  \ref{l:SLLN.exp.pmf} and 
\ref{l:SLLN.marNn} that this is the marginal pmf of the
$(k+1)$-dimensional joint pmf $(\jointp)$, corresponding to the first
coordinate. Therefore, the marginal pgf  
$$
\widetilde \varphi^{(0)}(x) := \sum_{i=k+1}^\infty \marp_i x^i, \ \ 0\le x \le 1,
$$
satisfies $u(x) \le \widetilde{\varphi}^{(0)}(x)$, $0<x<1$. By   Lemma
\ref{l:dist.marp}, we can explicitly calculate the marginal pgf as follows:  
\begin{align*}
\widetilde \varphi^{(0)}(x) &= \E\big[ x^{k+1+T_{a_0}(Z^{-1})} \big] =
                              x^{k+1} \E\Big[\big(x(1-Z) + Z
                              \big)^{-a_0}  \Big]  
\sim  \E [Z^{-a_0}]  x^{k+1} \ \ \text{ as } x\downarrow0. 
\end{align*}
 Using once again the fact that $\gamma/b_0<k+1$, this contradicts
 \eqref{e:u.lower.bdd}  unless $C=0$.

We are now in a position to derive the expression
\eqref{e:pgf.as.solution}, and to do so it remains to find what
characteristic curve a general point $(x,x_1,\dots,x_k)$ in
$(0,1)^{k+1}$ belongs to. That is, we need to find the corresponding
constants   $C_i, \, i=1,\dots,k$. An inductive argument using 
\eqref{e:xm.hm} and \eqref{e:def.hm}  shows that 
$$
\big( C_m + h_m(x) \big)^{-\frac{\gamma_m}{b_m}} =
(1-x)^{\frac{\gamma_m}{b_0}}\prod_{\ell=1}^m
x_\ell^{\frac{\gamma_m}{b_\ell}}  
$$
for every $m=1,\dots,k$. Thus, 
$$
\prod_{m=1}^k \big( C_m + h_m(x) \big)^{-\frac{\gamma_m}{b_m}} =
(1-x)^{b_0^{-1}\sum_{m=1}^k \gamma_m} \prod_{m=1}^k x_m^{b_m^{-1}
  \sum_{i=m}^k \gamma_i},  
$$
and a simple algebra shows that 
\begin{equation}  \label{e:factor}
\frac{\tau}{b_0}\,x^{\frac{\gamma}{b_0}} (1-x)^{\frac{\gamma_0-\gamma}{b_0}}\prod_{m=1}^k  \big( C_m+h_m(x) \big)^{-\frac{\gamma_m}{b_m}} = \frac{\tau}{b_0}\,x^{\frac{\gamma}{b_0}} (1-x)^{\frac{\tau}{b_0}} \prod_{m=1}^k x_m^{b_m^{-1} \sum_{i=m}^k \gamma_i}. 
\end{equation}

Next, we define functions $T_m:\, (0,1)^{m+2}\to\R_+, \, m=1,\ldots,
k$ by 
\begin{align*}
&T_1(x,x_1,t)=(1-x_1)(1-x)^{-b_1/b_0}+x_1(1-t) ^{-b_1/b_0}, \\
\notag &T_m(x,x_1,\ldots, x_m,t) = (1-x_m)(1-x)^{-b_m/b_0} + x_m
         T_{m-1}(x,x_1,\ldots, x_{m-1},t)^{b_m/b_{m-1}}, \,
         m=2,\ldots, k.
\end{align*}
Using repeatedly \eqref{e:xm.hm} and \eqref{e:def.hm}  along with an
inductive argument shows that for every $m=1,\dots,k$, 
\begin{align*}
\big( C_m+h_m(t) \big)^{-\frac{1+\delta}{b_m}}  
&=\Big(\prod_{\ell=1}^m x_\ell^{\frac{1+\delta}{b_\ell}} \Big) T_m(x,x_1,\ldots, x_m,t)^{-\frac{1+\delta}{b_m}}, 
\end{align*}
and therefore, 
\begin{equation}  \label{e:integrand}
\prod_{m=1}^k \big( C_m+h_m(t) \big)^{-\frac{1+\delta}{b_m}}=
\prod_{m=1}^k \Big[ x_m^{(1+\delta)(k-m+1)/b_m} T_m(x,x_1,\ldots, x_m,t)^{-\frac{1+\delta}{b_m}} \Big]. 
\end{equation}

Since
$$
\frac{1}{b_m} \Big( (1+\delta)(k-m+1) +\sum_{i=m}^k \gamma_i \Big) = k-m+1, \  \ \ m=1,\dots,k,
$$
it follows from \eqref{e:factor} and \eqref{e:integrand} that 
\begin{align*}
\varphi(x,x_1,\dots,x_k) &= \frac{\tau}{b_0}\, x^{\frac{\gamma}{b_0}} (1-x)^{\frac{\tau}{b_0}} \prod_{m=1}^k x_m^{k-m+1} \int_0^x t^{k-\frac{\gamma}{b_0}} (1-t)^{\frac{\gamma-\gamma_0-b_1k}{b_0}-1} \\
& \hskip 2in \times \prod_{m=1}^k T_m(x,x_1,\ldots,
                                                                                                                                                                                                           x_m,t)^{-\frac{1+\delta}{b_m}}  \dif t. 
\end{align*}
Finally, by changing the variables $t =x\big(x(1-z)+z\big)^{-1}$, 
\begin{align*}
\varphi(x,x_1,\dots,x_k) &= \frac{\tau}{b_0}\, x^{\frac{\gamma}{b_0}} (1-x)^{\frac{\tau}{b_0}} \prod_{m=1}^k x_m^{k-m+1}  \cdot x^{k-\frac{\gamma}{b_0}+1} (1-x)^{b_0^{-1}(\gamma-\gamma_0-b_1k )} \\
&\quad \times \int_1^\infty z^{-\big(1+ b_0^{-1} (\gamma_0+b_1k
                                                                                                                                                                                                                    -\gamma  )\big)} \big(x(1-z)+z    \big)^{\frac{\gamma_0+b_1k}{b_0}-(k+1)}  \\
&\qquad \qquad \times  \prod_{m=1}^k T_m\bigl(x,x_1,\ldots, x_m, x\big(x(1-z)+z\big)^{-1}\bigr) ^{-\frac{1+\delta}{b_m}} 
 \dif z  \\
&= x^{k+1}(1-x)^{-k(1+\delta)/b_0}\prod_{m=1}^k x_m^{k-m+1}\int_1^\infty \frac{\tau}{b_0}\, z^{-(1+(\tau+k(1+\delta))/b_0)} \big( x (1-z)+z \big)^{-\frac{1+\delta}{b_0}}  \\
&\qquad \qquad \qquad \times  \prod_{m=1}^k T_m\bigl(x,x_1,\ldots, x_m, x\big(x(1-z)+z\big)^{-1}\bigr) ^{-\frac{1+\delta}{b_m}}  \dif z. 
\end{align*}

Since it is easy to check by induction that
$$
T_m\bigl(x,x_1,\ldots,                  x_m,
x\big(x(1-z)+z\big)^{-1}\bigr)
= (1-x)^{-b_m/b_0} z ^{-b_m/b_0} H_m(x,x_1,\ldots,
x_m,z), \ m=1,\ldots, k,
$$
with the functions $(H_m)$ defined by \eqref{e:Hs}, the expression in
  \eqref{e:pgf.as.solution} follows by simple algebra. 
\end{proof}

\begin{lemma}  \label{l:coincidence.with.pgf}
The pgf of the random vector  \eqref{e:representation.D.inf}
coincides with  that in Lemma  \ref{l:solu.PDE}. Moreover, for every
fixed $Z=z$ and every 
 $0\leq i<m\leq k$, 
\begin{equation} \label{e:still.NB}
\E\Big[x^{\sum_{j=1}^{T_m} N_{i,j}^{(m)}}\Big] = \left[
x(1-z^{b_i/b_0})+z^{b_i/b_0}\right]^{-(1+\delta)/b_i}, \ x\in[0,1].
\end{equation}
\end{lemma}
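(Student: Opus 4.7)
The plan is to compute the joint pgf of the random vector in \eqref{e:representation.D.inf} by first conditioning on $Z=z$, then factoring across the mutually independent ``clusters'' indexed by $m=0,1,\dots,k$, and finally matching the result with \eqref{e:pgf.as.solution}; the marginal identity \eqref{e:still.NB} will fall out of the same computation. Writing $V_i:=k-i+1+T_i+\sum_{m=i+1}^k\sum_{j=1}^{T_m}N_{i,j}^{(m)}$ and pulling out the deterministic prefactor $\prod_{i=0}^k x_i^{k-i+1}$, the conditional pgf given $Z=z$ regroups as $\E[x_0^{T_0}\mid Z=z]\cdot\prod_{m=1}^k A_m$, with the $m$th cluster factor
\begin{equation*}
A_m := \E\Big[x_m^{T_m}\prod_{i=0}^{m-1}x_i^{\sum_{j=1}^{T_m}N_{i,j}^{(m)}}\,\Big|\,Z=z\Big] = \E\bigl[(x_m\phi_m)^{T_m}\bigm|Z=z\bigr],
\end{equation*}
where $\phi_m(x_0,\dots,x_{m-1};z)$ denotes the joint conditional pgf of $(N_0^{(m)},\dots,N_{m-1}^{(m)})$ given $Z=z$, and the last equality uses the i.i.d.\ structure of the copies $(N_{i,j}^{(m)})_j$ given $T_m$. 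The negative binomial pgf \eqref{e:pgf.neg.binomial} with $(r,a)=((1+\delta)/b_m,z^{-b_m/b_0})$ then yields $A_m=[x_m\phi_m(1-z^{b_m/b_0})+z^{b_m/b_0}]^{-(1+\delta)/b_m}$.

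The heart of the argument is to establish, by induction on $m\geq 1$, that $x_m\phi_m(1-z^{b_m/b_0})+z^{b_m/b_0}=H_m(x_0,\dots,x_m,z)$ with $H_m$ defined in \eqref{e:Hs}. The base case $m=1$ follows by applying the extended truncated NB pgf \eqref{e:pgf.ext.trun.neg.binomial} to $N_0^{(1)}\sim p^{\rm TNB}_{b_1/b_0,z^{-1}}$ and combining the algebraic identity $(1-z^{b_1/b_0})/(1-z^{-b_1/b_0})=-z^{b_1/b_0}$ with $1-(1-z^{-1})x_0=z^{-1}(x_0+z(1-x_0))$ to land on the explicit form of $H_1$. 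For the inductive step, the compounding definition $N_i^{(m)}=\sum_{j=1}^{N_{m-1}^{(m)}}N_{i,j}^{(m-1)}$ together with \eqref{e:pgf.ext.trun.neg.binomial} applied to $N_{m-1}^{(m)}\sim p^{\rm TNB}_{b_m/b_{m-1},z^{-b_{m-1}/b_0}}$ gives $\phi_m=[1-(1-(1-z^{-b_{m-1}/b_0})x_{m-1}\phi_{m-1})^{b_m/b_{m-1}}]/(1-z^{-b_m/b_0})$; the induction hypothesis is equivalent to $1-(1-z^{-b_{m-1}/b_0})x_{m-1}\phi_{m-1}=z^{-b_{m-1}/b_0}H_{m-1}$, and plugging everything back together with the same cancellation $(1-z^{b_m/b_0})/(1-z^{-b_m/b_0})=-z^{b_m/b_0}$ produces exactly $(1-x_m)z^{b_m/b_0}+x_m H_{m-1}^{b_m/b_{m-1}}=H_m$. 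Adding the $m=0$ factor $A_0=(x_0(1-z)+z)^{-(1+\delta)/b_0}$ and integrating against the Pareto density $f_Z(z)=(\tau/b_0)z^{-(1+\tau/b_0)}\one\{z\geq 1\}$ then reproduces \eqref{e:pgf.as.solution} exactly.

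For the second assertion \eqref{e:still.NB}, I specialize $A_m$ by setting $x_\ell=1$ for all $\ell\neq i$ and $x_i=x$, which collapses $A_m$ to the left-hand side of \eqref{e:still.NB} (the $x_m^{T_m}$ factor disappears with $x_m=1$). By the identification just established, this equals $H_m(1,\dots,1,x,1,\dots,1,z)^{-(1+\delta)/b_m}$, and a short induction on $m\geq i+1$ using \eqref{e:Hs} — the $(1-x_\ell)z^{b_\ell/b_0}$ term vanishes at every step past coordinate $i$ — shows this specialized $H_m$ equals $(x(1-z^{b_i/b_0})+z^{b_i/b_0})^{b_m/b_i}$, which after the exponent reduction yields the negative binomial pgf with parameters $((1+\delta)/b_i,z^{-b_i/b_0})$. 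The only real difficulty is tracking the nested algebraic cancellations between the various powers of $z$ and the ratios $b_m/b_{m-1}$; once the inductive matching of $x_m\phi_m(1-z^{b_m/b_0})+z^{b_m/b_0}$ with $H_m$ is in hand, both claims are immediate.
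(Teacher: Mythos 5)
Your proof is correct and follows essentially the same path as the paper's: reduce to the cluster factor $A_m=\E\bigl[x_m^{T_m}\prod_{i<m}x_i^{\sum_j N_{i,j}^{(m)}}\bigr]$, apply the negative binomial pgf, and prove by induction that the remaining pgf of $(N_0^{(m)},\dots,N_{m-1}^{(m)})$ is controlled by $H_{m-1}$; your identity $x_m\phi_m(1-z^{b_m/b_0})+z^{b_m/b_0}=H_m$ is an algebraic rearrangement of the paper's \eqref{e:goal.induction}, and the specialization $x_\ell=1$ for $\ell\ne i$ for the second claim matches the paper's argument.
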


\begin{proof}
An inspection of \eqref{e:representation.D.inf} and
\eqref{e:pgf.as.solution} shows that for the first part of the lemma, we only have to prove that for
every $z>1$, $m=1,\ldots, k$ and all $(x_0,\ldots, x_m)\in (0,1)^{m+1}$, we have
\begin{equation} \label{e:check.H}
\E \left[ x_m^{T_m}\prod_{i=0}^{m-1} x_i^{\sum_{j=1}^{T_m}
N_{i,j}^{(m)}}\right] = H_m\bigl(x_0,x_1,\ldots,
x_m, z\bigr) ^{-(1+\delta)/b_m}.
\end{equation}
It follows from \eqref{e:pgf.neg.binomial} that the
left-hand side of \eqref{e:check.H} is equal to
$$
\left[ z^{b_m/b_0} + (1-z^{b_m/b_0})x_m \E \left(\prod_{i=0}^{m-1}
x_i^{N_i^{(m)}}\right)\right]^{-(1+\delta)/b_m}. 
$$
So, by \eqref{e:Hs} it is sufficient to show that for every $m=1,\dots,k$ and $z>1$, 
$$
z^{b_m/b_0} + (1-z^{b_m/b_0})x_m  \E \Big[ \prod_{i=0}^{m-1} x_i^{N_i^{(m)}} \Big] = (1-x_m) z^{b_m/b_0} + x_m H_{m-1} (x_0, x_1,\dots,x_{m-1}, z)^{b_m/b_{m-1}}; 
$$
equivalently, 
\begin{equation}  \label{e:goal.induction}
 \E \Big[ \prod_{i=0}^{m-1} x_i^{N_i^{(m)}} \Big]  =\frac{1-z^{-b_m/b_0}H_{m-1}(x_0,x_1,\dots,x_{m-1},z)^{b_m/b_{m-1}}}{1-z^{-b_m/b_0}}. 
\end{equation}
In the above we have defined 
\begin{equation}  \label{e:def.H0}
H_0(x_0,z) := x_0 + z (1-x_0). 
\end{equation}

The proof of \eqref{e:goal.induction} is by induction on $m$. For $m=1$, we have by \eqref{e:pgf.ext.trun.neg.binomial}, 
$$
\E \big[  x_0^{N_0^{(1)}}\big] = \frac{1-\big( 1-(1-z^{-1})x_0 \big)^{b_1/b_0}}{1-z^{-b_1/b_0}} = \frac{1-z^{-b_1/b_0}H_0(x_0,z)^{b_1/b_0}}{1-z^{-b_1/b_0}}, 
$$
and so, \eqref{e:goal.induction} holds for $m=1$. Suppose now that \eqref{e:goal.induction} holds for some $1\le m \le k-1$. We have by \eqref{e:pgf.ext.trun.neg.binomial} 
and \eqref{e:ext.trun.neg.binom.setup}, 
\begin{align*}
&\E \Big[ \prod_{i=0}^{m} x_i^{N_i^{(m)}} \Big] = \E \bigg[ \Big( x_m \E \big[\prod_{i=0}^{m-1} x_i^{N_i^{(m)}}  \big] \Big)^{N_m^{(m+1)}} \bigg] \\
&= \frac{1-\big( 1-(1-z^{-b_m/b_0})x_m \E \big[  \prod_{i=0}^{m-1} x_i^{N_i^{(m)}} \big] \big)^{b_{m+1}/b_m}}{1-z^{-b_{m+1}/b_0}} \\
 &= \frac{1-z^{-b_{m+1}/b_0}H_m(x_0,\dots,x_m, z)^{b_{m+1}/b_m}}{1-z^{-b_{m+1}/b_0}}, 
\end{align*}
where in the last step we have used \eqref{e:Hs}. This  completes the inductive step and,
hence, the proof of the first part of the lemma. 

It remains to prove \eqref{e:still.NB}. It follows from \eqref{e:check.H} with $x_j=1$ for $j\neq i$ that 
$$
\E \Big[ x_i^{\sum_{j=1}^{T_m}N_{i,j}^{(m)}} \Big] = H_m(1,\dots,1,x_i,1,\dots,1,z)^{-(1+\delta)/b_m}. 
$$
Applying the recursion in \eqref{e:Hs} repeatedly, it holds that 
\begin{align*}
H_m(1,\dots,1,x_i,1,\dots,1,z)^{-(1+\delta)/b_m} = H_i(1,\dots,1, x_i,z)^{-(1+\delta)/b_i} = \big( x_i (1-z^{b_i/b_0}) + z^{b_i/b_0} \big)^{-(1+\delta)/b_i}. 
\end{align*}
This implies that $\sum_{j=1}^{T_m} N_{i,j}^{(m)}$ has a negative binomial distribution with desired parameters. 
\end{proof}

\medskip

\subsection{Proof of Theorem \ref{t:regular.variation}}

We use a technique similar to that in \cite[Theorem 4.1]{samorodnitsky:resnick:towsley:davis:willis:wan:2016}.
We start with a lemma that explains the appearance of the Gamma
random variables $\big( \Gamma_{(1+\delta)/b_m}^{(m)} \big)_{m=0}^k$
and the L\'evy processes $\big( W_\infty^{(m,i)}(t), \, t\ge0
\big)$, $1\le i \le m \le k$.  In this lemma we discuss the conditional
distributions, given $Z=z$, of the random objects appearing
in \eqref{e:representation.D.inf}, when $z\to\infty$.

\begin{lemma}  \label{l:weak.conv.neg.binom}
The following weak convergence holds: 
\begin{align*}
&\bigg(  z^{-b_i/b_0} \biggl( k-i+1 +T_i + \sum_{m=i+1}^k \sum_{j=1}^{T_m} N_{i,j}^{(m)}\biggr), \,
  i=0,\ldots, k  \bigg)  \Rightarrow  \sum_{m=0}^k \BX^{(m)}, 
\ z\to \infty, 
\end{align*}
where the $(k+1)$-dimensional random vectors $\BX^{(m)}, \, m=0,\ldots, k$
in the right-hand side are independent and  defined by \eqref{e:lim.proc}. 
\end{lemma}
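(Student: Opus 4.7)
The plan is to establish the claim by showing convergence of joint Laplace transforms, exploiting the explicit pgf representation already established in Lemma \ref{l:coincidence.with.pgf}. By the conditional independence structure built into the construction in Section~\ref{sec:regvar}, the random vectors
\[
\mathbf V^{(m)}:=\Bigl(T_m,\ \sum_{j=1}^{T_m} N_{0,j}^{(m)},\ldots,\sum_{j=1}^{T_m}N_{m-1,j}^{(m)}\Bigr),\qquad m=0,1,\ldots,k,
\]
are, conditionally on $Z=z$, independent (with the convention that for $m=0$ only the $T_0$ coordinate is present). Since the $i$-th coordinate of \eqref{e:representation.D.inf} is the deterministic $k-i+1$ plus $T_i$ plus independent contributions $\sum_{j=1}^{T_m} N_{i,j}^{(m)}$, $m=i+1,\ldots,k$, it suffices to prove, for each fixed $m$, that
\[
\Bigl(z^{-b_0/b_0}\sum_{j=1}^{T_m}N_{0,j}^{(m)},\ldots,z^{-b_{m-1}/b_0}\sum_{j=1}^{T_m}N_{m-1,j}^{(m)},\ z^{-b_m/b_0}T_m\Bigr)\Rightarrow \bigl(X_0^{(m)},\ldots,X_{m-1}^{(m)},X_m^{(m)}\bigr),
\]
as $z\to\infty$, and then reassemble the coordinates. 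The deterministic shifts $k-i+1$ vanish under the $z^{-b_i/b_0}$ scaling and may be ignored.

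The main computation is an asymptotic analysis of the function $H_m$ from \eqref{e:Hs}. Substituting $x_i=e^{-\theta_i z^{-b_i/b_0}}$ (so that $1-x_i\sim \theta_i z^{-b_i/b_0}$) into \eqref{e:check.H} converts the joint pgf into the joint Laplace transform of the scaled vector. I will show by induction on $m$ that, with the $\theta_i$'s fixed,
\[
\lim_{z\to\infty} H_m\bigl(e^{-\theta_0 z^{-b_0/b_0}},\ldots,e^{-\theta_m z^{-b_m/b_0}},z\bigr)=1+\theta_m+\psi_m(\theta_0,\ldots,\theta_{m-1}),
\]
where $\psi_m$ is defined by the recursion $\psi_1(\theta_0)=(1+\theta_0)^{b_1/b_0}-1$ and
\[
\psi_{m}(\theta_0,\ldots,\theta_{m-1})=\bigl(1+\theta_{m-1}+\psi_{m-1}(\theta_0,\ldots,\theta_{m-2})\bigr)^{b_m/b_{m-1}}-1.
\]
The base case $m=1$ is an explicit Taylor expansion: $(1-x_1)z^{b_1/b_0}\to\theta_1$ and $x_0+z(1-x_0)\to 1+\theta_0$, which gives the stated limit. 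The inductive step is a direct calculation using \eqref{e:Hs}: $(1-x_m)z^{b_m/b_0}\to\theta_m$ and $x_m H_{m-1}^{b_m/b_{m-1}}\to (1+\theta_{m-1}+\psi_{m-1})^{b_m/b_{m-1}}=1+\psi_m$. Raising to the power $-(1+\delta)/b_m$ and taking limits in \eqref{e:check.H} produces
\[
\lim_{z\to\infty}\E\Bigl[e^{-\theta_m(z^{-b_m/b_0}T_m)-\sum_{i=0}^{m-1}\theta_i(z^{-b_i/b_0}\sum_j N_{i,j}^{(m)})}\,\Big|\,Z=z\Bigr]=\bigl(1+\theta_m+\psi_m(\theta_0,\ldots,\theta_{m-1})\bigr)^{-(1+\delta)/b_m}.
\]

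It remains to identify the right-hand side with the joint Laplace transform of $\mathbf X^{(m)}$. This uses the Markov structure of \eqref{e:lim.proc}: conditioning successively on $X_m^{(m)},X_{m-1}^{(m)},\ldots,X_1^{(m)}$, and using \eqref{e:LogLap} at each step, one sees that $-\log\E\bigl[e^{-\sum_{j=0}^m \theta_j X_j^{(m)}}\bigr]$ is obtained by the same recursion that defines $\psi_m$, with the final Gamma layer $\Gamma_{(1+\delta)/b_m}^{(m)}$ contributing the outer exponent $(1+\delta)/b_m$ and the logarithm. This matches the limit above, giving marginal convergence of Laplace transforms for each $m$. Combined with the conditional independence across $m$, this yields convergence of the joint Laplace transform of the full vector to that of $\sum_{m=0}^k \mathbf X^{(m)}$, and hence the claimed weak convergence.

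The main obstacle, and the step that needs the most care, is the bookkeeping in the inductive analysis of $H_m$: one must show that higher-order terms in the Taylor expansions of the $x_i$'s genuinely do not contribute and that the limit commutes with the nested exponentiations in \eqref{e:Hs}, which follows from continuity of $y\mapsto y^{b_m/b_{m-1}}$ on $(0,\infty)$ and uniform boundedness of $H_{m-1}$ near its limit. Matching the recursion for $\psi_m$ to the iterated Laplace exponents of the subordinators $W_\infty^{(m,i)}$ is straightforward but must be written out carefully to align indices.
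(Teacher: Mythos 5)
Your proposal is correct and follows essentially the same route as the paper: substitute $x_i = e^{-\theta_i z^{-b_i/b_0}}$ into the pgf identity from Lemma~\ref{l:coincidence.with.pgf}, show by induction that $H_m$ converges to what the paper calls $g_m(\theta_0,\dots,\theta_m)$ (your $1+\theta_m+\psi_m$ is exactly that quantity, by unwinding the recursion), and then identify $g_m^{-(1+\delta)/b_m}$ with the joint Laplace transform of $\mathbf{X}^{(m)}$ by successively conditioning through the chain $X_m^{(m)}\to X_{m-1}^{(m)}\to\cdots$ and using \eqref{e:LogLap}. The factorization into independent $\mathbf{X}^{(m)}$-blocks via the product over $m$ is also how the paper handles the reassembly.
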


\begin{proof}
We start by defining functions $g_m:\, [0,\infty)^{m+1} \to
[1,\infty), \, m=0,\ldots, k$, by 
\begin{equation} \label{g.m}
g_0(\theta_0)=1+\theta_0, \ \ \  g_m(\theta_0,\ldots,
\theta_m)=g_{m-1}(\theta_0,\ldots, \theta_{m-1})^{b_m/b_{m-1}}+ \theta_m,
\, m=1,\ldots, k.
\end{equation}
Lemma \ref{l:coincidence.with.pgf} implies that for every $z>1$ and $x_1,\dots,x_k \in [0,1]$,
\begin{equation}  \label{e:result.previous.lemma}
\E \bigg[ \bigg( \prod_{i=0}^{k-1} x_i^{T_i + \sum_{m=i+1}^k \sum_{j=1}^{T_m} N_{i,j}^{(m)}} \bigg) x_k^{T_k} \bigg] = \prod_{m=0}^k H_m(x_0,\dots,x_m, z)^{-(1+\delta)/b_m}, 
\end{equation}
where $H_m$, $m=1,\dots,k$ are given in \eqref{e:Hs} and $H_0$ is defined at \eqref{e:def.H0}.
Given $\theta_0, \dots, \theta_k \ge0$, by substituting $x_i = e^{-\theta_i z^{-b_i/b_0}}$, $i=0,\dots,k$, into \eqref{e:result.previous.lemma}, we obtain that 
\begin{align*}
&\E \bigg[\exp \bigg\{ -\sum_{i=0}^{k-1} \theta_i z^{-b_i/b_0} \Big( T_i + \sum_{m=i+1}^k \sum_{j=1}^{T_m} N_{i,j}^{(m)} \Big)-\theta_k z^{-b_k/b_0}T_k  \bigg\}  \bigg] \\
&= \prod_{m=0}^k H_m \big( e^{-\theta_0 z^{-1}}, e^{-\theta_1 z^{-b_1/b_0}}, \dots, e^{-\theta_mz^{-b_m/b_0}}, z \big)^{-(1+\delta)/b_m}. 
\end{align*}
We now claim that for every $m=0,\dots,k$, 
\begin{equation}  \label{e:induction.limit}
H_m \big( e^{-\theta_0 z^{-1}}, e^{-\theta_1 z^{-b_1/b_0}}, \dots, e^{-\theta_mz^{-b_m/b_0}}, z \big)^{-(1+\delta)/b_m} \to g_m(\theta_0,\dots,\theta_m), \ \ \text{as } z \to\infty. 
\end{equation}
The proof is by induction on $m$. 
For $m=0$, we have, as $z\to\infty$, 
$$
H_0\big( e^{-\theta_0 z^{-1}}, z \big) = e^{-\theta_0 z^{-1}} + z\big( 1-e^{-\theta_0 z^{-1}} \big) \to g_0(\theta_0). 
$$
Suppose \eqref{e:induction.limit} is true for some  $m=0,\dots,k-1$. Then, by the induction hypothesis, 
\begin{align*}
&H_{m+1} \big( e^{-\theta_0 z^{-1}}, e^{-\theta_1 z^{-b_1/b_0}}, \dots, e^{-\theta_{m+1}z^{-b_{m+1}/b_0}}, z \big)\\
&=e^{-\theta_{m+1}z^{-b_{m+1}/b_0}} \Big( H_m \big( e^{-\theta_0 z^{-1}},  \dots, e^{-\theta_mz^{-b_m/b_0}}, z \big)^{b_{m+1}/b_m} -z^{b_{m+1}/b_0} \Big) +z^{b_{m+1}/b_0} \\
&= \big(1-\theta_{m+1}z^{-b_{m+1}/b_0} +o (z^{-b_{m+1}/b_0})  \big) \Big\{ \big( g_m(\theta_0, \dots, \theta_m) +o(1)\big)^{b_{m+1}/b_m} - z^{b_{m+1}/b_0}  \Big\} + z^{b_{m+1}/b_0} \\
&\to g_m(\theta_0,\dots,\theta_m)^{b_{m+1}/b_m} + \theta_{m+1} = g_{m+1}(\theta_0,\dots,\theta_{m+1}), \ \  \ z\to\infty. 
\end{align*}
Thus, we  conclude that 
\begin{align*}
&\lim_{z\to\infty} \E \bigg[  \exp \bigg\{ -\sum_{i=0}^{k-1} \theta_i z^{-b_i/b_0} \Big( k-i+1+T_i + \sum_{m=i+1}^k \sum_{j=1}^{T_m} N_{i,j}^{(m)} \Big)-\theta_k z^{-b_k/b_0}(1+T_k)  \bigg\}  \bigg]\\
&=\lim_{z\to\infty} \E \bigg[  \exp \bigg\{ -\sum_{i=0}^{k-1} \theta_i z^{-b_i/b_0} \Big( T_i + \sum_{m=i+1}^k \sum_{j=1}^{T_m} N_{i,j}^{(m)} \Big)-\theta_k z^{-b_k/b_0}T_k  \bigg\}  \bigg]\\
&  = \prod_{m=0}^k g_m(\theta_0,\dots,\theta_m)^{-(1+\delta)/b_m}. 
\end{align*}

In order to complete the proof of the lemma, it is 
sufficient to prove that for any $m=0,\ldots, k$, we have
\begin{equation} \label{e:correct,Lapl}
\E\Big[ \exp\big\{ -\sum_{i=0}^m \theta_i X_i^{(m)}\big\}\Big] =
g_m(\theta_0,\ldots,\theta_m)^{-(1+\delta)/b_m}.
\end{equation}

The crucial observation is that
   the $(m+1)$-dimensional stochastic process denoted by $\BY^{(m)}(t)=(Y_0^{(m)}(t),\ldots,
   Y_{m}^{(m)}(t))$, $ t\geq 0$, which we define by 
$Y_{m}^{(m)}(t)=t$, and $Y_i^{(m)}(t)=W_\infty^{(m,i+1)}(Y_{i+1}^{(m)}(t)), \,
   i=0,\ldots, m-1, \, t\ge0$ is an $(m+1)$-dimensional L\'evy
   process.  Furthermore, $\BX^{(m)}\eid \BY^{(m)}\bigl(
   \Gamma^{(m)}_{(1+\delta)/b_m}\bigr)$. 
Therefore, by the properties of a L\'evy process, 
   \begin{align*}
\E\left[ \exp\left\{ -\sum_{i=0}^m \theta_i X_i^{(m)}\right\}\right] &= \E\left[\exp\left\{
     -\sum_{i=0}^m \theta_i  Y_i^{(m)}\bigl(
   \Gamma^{(m)}_{(1+\delta)/b_m}\bigr)\right\}\right]\\
&= \E\left[ \left( \E\left[ \exp\left\{
     -\sum_{i=0}^m \theta_i  Y_i^{(m)}(1)\right\}\right]\right)^{ \Gamma^{(m)}_{(1+\delta)/b_m}}\right]\\
&= \left( 1+\theta_m - \log \E\left[ \exp\left\{
     -\sum_{i=0}^{m-1} \theta_i
   Y_i^{(m)}(1)\right\}\right]\right)^{-(1+\delta)/b_m}. 
   \end{align*}
  Therefore, the statement \eqref{e:correct,Lapl} is equivalent to 
  \begin{equation} \label{e:correct,Lapl1}
1+\theta_m - \log \E\left[ \exp\left\{
     -\sum_{i=0}^{m-1} \theta_i    Y_i^{(m)}(1)\right\}\right] =
   g_m(\theta_0,\ldots,\theta_m)
 \end{equation}
 for every $m=0,\ldots, k$.     
Once again, we prove \eqref{e:correct,Lapl1}  by induction in $m$. 
For $m=0$, \eqref{e:correct,Lapl1} is obviously true. 
Assume that \eqref{e:correct,Lapl1} holds for some $m=0,\ldots, k-1$. We will
   prove that it also holds for $m+1$. Using, once again, the
   properties of a L\'evy process and \eqref{e:LogLap}, 
   \begin{align*}
&\E\left[\exp\left\{
     -\sum_{i=0}^{m} \theta_i    Y_i^{(m+1)}(1)\right\}\right]
  = \E\left[ \E\left[\exp\left\{
     -\sum_{i=0}^{m} \theta_i    Y_i^{(m+1)}(1)\right\}\right]\Bigg|Y_m^{(m+1)}(1)\right]\\
=&\E\left[ \left\{ e^{-\theta_m} \E\left[\exp\left\{
     -\sum_{i=0}^{m-1} \theta_i    Y_i^{(m)}(1)\right\}\right]\right\}^{Y_m^{(m+1)}(1)}\right]\\
=& \E\left[\exp\left\{ -\left( \theta_m - \log \E\left[\exp\left\{
     -\sum_{i=0}^{m-1} \theta_i    Y_i^{(m)}(1)\right\}\right]\right) Y_m^{(m+1)}(1)\right\}\right]\\
=&\exp\left\{ -\left[ \left( 1+\theta_m - \log \E\left[\exp\left\{
     -\sum_{i=0}^{m-1} \theta_i    Y_i^{(m)}(1)\right\}\right]\right)^{b_{m+1}/b_m}-1\right]\right\}\\
=&\exp\left\{ -\bigl( g_m(\theta_0,\ldots, \theta_m) ^{b_{m+1}/b_m}-1\bigr)\right\}
  \end{align*}
  by the assumption of the induction. Therefore,
  \begin{align*}
1+\theta_{m+1} - \log \E\left[\exp\left\{
     -\sum_{i=0}^{m} \theta_i    Y_i^{(m+1)}(1)\right\}\right] &=\theta_{m+1}+
   g_m(\theta_0,\ldots,\theta_m) ^{b_{m+1}/b_m} \\
   &=  g_{m+1}(\theta_0,\ldots,\theta_{m+1}). 
  \end{align*}
This completes the induction step, hence proving
\eqref{e:correct,Lapl1} and finishing the proof of the lemma. 
\end{proof}
\medskip

\begin{remark} \label{rk:all.Gamma}
Note that for each $0\le i \le m \le k$, we have by \eqref{g.m} and 
\eqref{e:correct,Lapl}, 
\begin{align*}
\E[e^{-\theta_i X_i^{(m)}}] = g_m(0,\ldots, 0,\theta_i,0,\ldots,
0)^{-(1+\delta)/b_m}
= g_i(0,\ldots, 0,\theta_i)^{-(1+\delta)/b_i} = (1+\theta_i)
^{-(1+\delta)/b_i}, 
\end{align*}
so each $X_i^{(m)}$ has the Gamma distribution with a shape parameter of
$(1+\delta)/b_i$ and unit scale. Furthermore, by independence, $\sum_{m=0}^k X_i^{(m)}$ has a Gamma distribution as well with a shape parameter of $(k-i+1)(1+\delta)/b_i$ and unit scale. 
\end{remark}

We now translate the statement of Lemma \ref{l:weak.conv.neg.binom}
into a vague  convergence statement by ``mixing'' its statement over
the values of $Z$. 
\begin{lemma}  \label{l:first.step.vague.conv}
We have, as   $h\to\infty$, 
\begin{align*}
&h\P\left( \left( \frac{Z}{h^{b_0/\tau}}, \, \bigg(  Z^{-b_i/b_0} \biggl( k-i+1 +T_i + \sum_{m=i+1}^k \sum_{j=1}^{T_m} N_{i,j}^{(m)}\biggr), \,
  i=0,\ldots, k  \bigg) \right)\in \cdot\right) \\
&\qquad \stackrel{v}{\to}\left( \nu_{\tau/b_0} \otimes \P\circ \left(\sum_{m=0}^k \BX^{(m)}\right)^{-1}\right)(\cdot)
\end{align*}
in $M_+\big( (0,\infty]\times  E_k\big)$. 
\end{lemma}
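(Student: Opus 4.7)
The plan is to reduce this vague convergence statement to an application of Lemma \ref{l:weak.conv.neg.binom}, with the Pareto density of $Z$ playing the role of the radial component and the conditional law of the rescaled vector given $Z=z$ providing the directional component in the limit.

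First, I would write out, for any test function $g\in C_K^+\bigl((0,\infty]\times E_k\bigr)$, the integral $h\,\E\!\left[g\bigl(Z/h^{b_0/\tau},\BW(Z)\bigr)\right]$, where $\BW(z)$ denotes the vector $\bigl(z^{-b_i/b_0}(k-i+1+T_i+\sum_{m=i+1}^k\sum_{j=1}^{T_m}N_{i,j}^{(m)})\bigr)_{i=0}^k$ evaluated at $Z=z$. Using the Pareto density \eqref{e:def.Z} and the change of variable $w=z/h^{b_0/\tau}$, a short computation that exploits the exact exponent $1+\tau/b_0$ shows that the prefactor $h$ is completely absorbed, giving
\begin{equation*}
h\,\E\!\left[g\bigl(Z/h^{b_0/\tau},\BW(Z)\bigr)\right]
=\frac{\tau}{b_0}\int_{h^{-b_0/\tau}}^\infty \E\!\left[g\bigl(w,\BW(wh^{b_0/\tau})\bigr)\right] w^{-(1+\tau/b_0)}\,\dif w.
\end{equation*}
This is the key algebraic step and it is precisely the self-similarity of the Pareto tail that makes it work.

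Next, for each fixed $w>0$, the argument $wh^{b_0/\tau}\to\infty$ as $h\to\infty$, so by Lemma \ref{l:weak.conv.neg.binom} the random vector $\BW(wh^{b_0/\tau})$ converges weakly to $\sum_{m=0}^k\BX^{(m)}$. Since $g$ is bounded and continuous on $(0,\infty]\times E_k$, continuous mapping and bounded convergence yield
\begin{equation*}
\E\!\left[g\bigl(w,\BW(wh^{b_0/\tau})\bigr)\right]\longrightarrow \E\!\left[g\bigl(w,\textstyle\sum_{m=0}^k\BX^{(m)}\bigr)\right]
\end{equation*}
pointwise in $w>0$.

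Finally, I would pass to the limit inside the $\dif w$-integral by dominated convergence. The key observation is that $g$ has compact support in $(0,\infty]\times E_k$, so its projection onto the first coordinate is a compact subset of $(0,\infty]$, bounded away from $0$ by some $w_0>0$. Hence the integrand is bounded by $\|g\|_\infty\,\one_{[w_0,\infty)}(w)$, which is integrable against $w^{-(1+\tau/b_0)}\dif w$; the lower limit $h^{-b_0/\tau}$ eventually drops below $w_0$ and plays no further role. Taking the limit gives exactly $\int_0^\infty \E[g(w,\sum_m\BX^{(m)})]\,\nu_{\tau/b_0}(\dif w)$, which is the desired product measure applied to $g$. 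The only delicate point, and the main obstacle, is ensuring the domination is uniform in $h$ despite the moving lower limit and the dependence of the inner expectation on $h$ through $\BW(wh^{b_0/\tau})$; both issues are controlled by the compact support of $g$ together with the fact that $g$ itself, not merely the underlying measures, is being integrated.
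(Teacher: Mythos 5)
Your argument is correct and is essentially the same as the paper's (which itself defers the details to the proof of Theorem~4.1 in \cite{samorodnitsky:resnick:towsley:davis:willis:wan:2016}): Pareto self-similarity is used to absorb the factor $h$ after a change of variables, Lemma~\ref{l:weak.conv.neg.binom} gives convergence of the inner conditional expectation pointwise in $w$, and dominated convergence finishes the argument. The only cosmetic difference is that you work directly with test functions in $C_K^+\bigl((0,\infty]\times E_k\bigr)$, whereas the paper reduces to the sufficient class $\one\{w>x\}g(y)$ with $g$ bounded continuous on $[0,\infty)^{k+1}$, so your lower limit $h^{-b_0/\tau}$ plays the role of the fixed $x$ there; both are handled the same way.
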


\begin{proof}
It is sufficient to prove that for every $x>0$ and a continuous  bounded function $g:[0,\infty)^{k+1}\to \R$, 
\begin{align*}
&h\E \bigg[ \one \Big\{ \frac{Z}{h^{b_0/\tau}} >x \Big\} \, g \bigg(  Z^{-b_i/b_0} \biggl( k-i+1 +T_i + \sum_{m=i+1}^k \sum_{j=1}^{T_m} N_{i,j}^{(m)}\biggr), \,  i=0,\ldots, k  \bigg) 
 \bigg] \\
&\to x^{-\tau/b_0}\, \E \bigg[ g\bigg(  \sum_{m=0}^k \BX^{(m)} \bigg)
             \bigg], \  h\to\infty. 
\end{align*}
This, however, follows from   Lemma \ref{l:weak.conv.neg.binom} and
the fact that $Z$ is  a Pareto $(\tau/b_0)$ random variable (the argument
is spelled out in the proof of \cite[Theorem 4.1]{samorodnitsky:resnick:towsley:davis:willis:wan:2016}).
\end{proof}

\medskip

\begin{proof}[Proof of Theorem \ref{t:regular.variation}]
  We start by extending the function $\chi$ in \eqref{e:def.chi} to
  the function  $\chi_1:(0,\infty]\times  [0,\infty]^{k+1}\to (0,\infty]\times [0,\infty]^{k+1} $ defined as
$$
\chi_1(x,y_0,y_1,\dots,y_k) = (x,xy_0, x^{b_1/b_0}y_1,\dots,
x^{b_k/b_0}y_k).  
$$
Since the function $\chi_1$ obviously satisfies the compactness condition in
Proposition 5.5 of \cite{resnick:2007},
it follows from that proposition and  Lemma
\ref{l:first.step.vague.conv} that 
\begin{align} \label{e:vague.0}
&h \P \left( \left( \frac{Z}{h^{b_0/\tau}},\,   \left(  h^{-b_i/\tau} \biggl( k-i+1 +T_i + \sum_{m=i+1}^k \sum_{j=1}^{T_m} N_{i,j}^{(m)}\biggr), \,
  i=0,\ldots, k  \right)
\right) \in \cdot \right)  \\
\notag 
&\qquad \stackrel{v}{\to} \left( \nu_{\tau/b_0} \otimes \P\circ \left(\sum_{m=0}^k
    \BX^{(m)}\right)^{-1}\right) \circ (\chi_1)^{-1}(\cdot)
\end{align}
in $M_+\big( (0,\infty] \times [0,\infty]^{k+1} \big)$. We will show
that this implies the vague convergence  
\begin{align} \label{e:reduced.vague}
&h  \P \left(   \left(  h^{-b_i/\tau} \biggl( k-i+1 +T_i + \sum_{m=i+1}^k \sum_{j=1}^{T_m} N_{i,j}^{(m)}\biggr), \,
  i=0,\ldots, k  \right)
\in \cdot \right)   \\
\notag 
&\qquad  \stackrel{v}{\to} \left( \nu_{\tau/b_0} \otimes \P\circ \left(\sum_{m=0}^k
    \BX^{(m)}\right)^{-1}\right) \circ \chi^{-1}(\cdot)
\end{align}
in $M_+( E_k )$, which is
equivalent to the assertion of the theorem. 

As in a similar situation in  \cite[Theorem
4.1]{samorodnitsky:resnick:towsley:davis:willis:wan:2016}, a
Slutsky-type argument shows that in order to derive
\eqref{e:reduced.vague} from \eqref{e:vague.0},  it is enough to prove
that for any $i=0,\ldots, k$ and $\vep>0$, 
$$
\lim_{x\to0} \limsup_{h\to\infty} h \P \left( \frac{Z}{h^{b_0/\tau}} 
\le x, \,   h^{-b_i/\tau} \biggl( k-i+1 +T_i + \sum_{m=i+1}^k \sum_{j=1}^{T_m} N_{i,j}^{(m)}\biggr)>\vep
 \right)=0. 
$$
Recall from Proposition \ref{p:representation} that  $\sum_{j=1}^{T_m}N_{i,j}^{(m)}$ has the same negative binomial distribution as $T_i$ for each $m=i+1,\dots,k$, so it is sufficient to verify that for every $\vep>0$, 
$$
\lim_{x\to0} \limsup_{h\to\infty} h \P \left( \frac{Z}{h^{b_0/\tau}} 
\le x, \, \frac{T_i}{h^{b_i/\tau}} >\vep  \right) = 0. 
$$
By (4.13) in \cite{samorodnitsky:resnick:towsley:davis:willis:wan:2016} and Markov's inequality, 
$$
P \Big( \frac{T_i}{h^{b_i/\tau}} >\vep\, \Big| \, Z=z \Big) \le \frac{\E[T_i^\ell | Z=z]}{\vep^\ell h^{\ell b_i/\tau}} \le \frac{C}{\vep^\ell h^{\ell b_i /\tau}}\, z^{\ell b_i/b_0} 
$$
for some $C>0$, where $\ell$ is a positive integer satisfying $\ell>\tau/b_i$. Using this bound, 
\begin{align*}
h \P \left( \frac{Z}{h^{b_0/\tau}} 
\le x, \, \frac{T_i}{h^{b_i/\tau}} >\vep  \right)  
&\le C\vep^{-\ell} h^{1-\ell b_i/\tau} \int_1^{xh^{b_0/\tau}}
z^{\ell b_i/b_0} z^{-(1+\tau/b_0)}\, \dif z \\
&\le Cx^{-(\tau/b_0)+(\ell b_i/b_0)}\to 0, \ \  \text{as } x \to 0, 
\end{align*}
by the choice of $\ell$ (here, the constant $C$ varies  from line to line). This completes the proof of the theorem.  
\end{proof}

\medskip

\subsection{Proof of Proposition \ref{p:degree.and.BI}}

Before commencing the proof, we establish certain properties of the
independent B.I.~processes appearing in the proposition. 
These properties are analogous to
\eqref{e:sum.k-degrees.1}, \eqref{e:n.k-simplices}, and
\eqref{e:sum.k-degrees}, respectively. The first property is 
\begin{equation}  \label{e:sum.BI.proc}
\sum_{\tau\in \mH_{k,n}}BI_\tau (T_n-T_{b(\tau)}) = (n+1)(k+2), 
\end{equation}
which is analogous to  \eqref{e:sum.k-degrees.1}, and both hold for
the same reason. Indeed,  note that the construction in Section
\ref{sec:multivariate.weak.conv} begins with $k+2$ B.I.~processes at
time $0$, each with an initial value of $1$.  Subsequently, at time
$T_j$ for $j\in \{ 1,\dots,n \}$, one of the existing processes $\big(
BI_{\sigma_\ell}(t) \big)_{1 \le \ell \le 1+j(k+1)}$ has an upward
jump of size  $1$. Meanwhile, $k+1$ new B.I.~processes are initiated
with initial values all equal to $1$. Consequently, the left-hand side
in \eqref{e:sum.BI.proc} increases by $k+2$ at each $T_j$, thus
verifying \eqref{e:sum.BI.proc}.

We proceed with two relations analogous to \eqref{e:n.k-simplices} and
\eqref{e:sum.k-degrees}.  
Suppose a B.I.~process $\big( BI_\sigma(t) \big)$, where $\sigma=\{
\sigma(0), \dots, \sigma(k) \}$ with $\sigma(0)<\dots < \sigma(k)$,
arises in the construction due to an addition of $\sigma(k)$. At that
time 
the number of the B.I.~processes whose indices contain $\{\sigma(k-m),
\dots, \sigma(k)\}$ is $k-m+1$, and the sum of their values is also
$k-m+1$. Therefore, initially, 
\begin{align}  
k-m+ \frac{1}{k-m+1} \times &\text{  the sum of the values of the B.I.~processes } \label{e:relate.to.degree.0}\\
&\qquad \text{whose indices contain } \{\sigma(k-m),\dots,\sigma(k)\} = k-m+1. \notag
\end{align}
When  one of the B.I.~processes, whose indices contain
$\{\sigma(k-m),\dots,\sigma(k)\}$, jumps for the first time, the
number of  such B.I.~processes (i.e. those containing $\{\sigma(k-m),\dots,\sigma(k)\}$
in their indices) increases to $2(k-m)+1$.   
Furthermore, each of the $k-m$ newly added such B.I.~processes has
initial value of $1$, while the value of the B.I.~process, which has
jumped,   increases by $1$. Overall,   the sum of the values of the
B.I.~processes, whose indices contain
$\{\sigma(k-m),\dots,\sigma(k)\}$ becomes $2(k-m+1)$, and the
expression in 
the left-hand side in \eqref{e:relate.to.degree.0} becomes
$k-m+2$.  In general, after $i-(k-m+1)\geq 0$ jumps in 
the B.I.~processes, whose indices contain
$\{\sigma(k-m),\dots,\sigma(k)\}$, 
\begin{align*} 
&\text{the number of B.I.~processes whose indices contain }   \\
&\qquad \qquad \{\sigma(k-m),\dots,\sigma(k)\}  = (i-k+m)(k-m)+1, \notag
\end{align*}
and 
\begin{align*}  
&\text{the sum of values of B.I.~processes whose indices contain }   \\
&\qquad \qquad \{\sigma(k-m),\dots,\sigma(k)\}  = (i-k+m)(k-m+1).\notag
\end{align*}
Furthermore, at that time 
 the left-hand side in \eqref{e:relate.to.degree.0}
is equal to $i$. 
 
\begin{proof}[Proof of Proposition \ref{p:degree.and.BI}]
Note that 
$$
\mD(0) = \big( (k-m+1)_{m=0}^k, \dots, (k-m+1)_{m=0}^k \big) =
\widetilde \mD(0). 
$$
We enlarge the state space of both processes by including the
identities of the simplices (i.e., the list of simplices) present at each time, in addition to their
degrees, in the state of the processes. We use the notation $\mD^*$
and $\widetilde \mD^*$ for the extended processes.  Clearly, the
extended processes 
still start in the same state at time 0. Since the two processes are
both Markovian by construction, it is enough 
to show that the processes have identical
transition probabilities. To this end, we assume that the two processes are in the same state at some time
$n\geq 0$, and establish a
one-to-one correspondence between events that can occur at this time
in the two systems, and show that, under this correspondence, the
events will have the same probabilities, and the two
processes will transit into identical states. 

Recall from \eqref{e:number.k-simplices} that $|\mF_{k,n}|=|\mH_{k,n}|=1+(n+1)(k+1)$, and 
consider $(c_\ell^{(0)}, \dots, c_\ell^{(k)})_{\ell=1, \dots, |\mF_{k,n}|}$ with $c_\ell^{(m)}\in \bbn$, $m=0,\dots,k$, such  that 
$$
\P \Big( \mD (n)=(c_\ell^{(0)},\dots, c_\ell^{(k)})_{\ell=1, \dots, |\mF_{k,n}|}  \Big)>0. 
$$ 
It follows from  \eqref{e:sum.k-degrees.1} that 
\begin{equation}  \label{e:sum.k-degree}
\sum_{\ell\in [|\mF_{k,n}|]}c_\ell^{(k)}=(n+1)(k+2). 
\end{equation}

Whenever $\mD (n)=(c_\ell^{(0)},\dots, c_\ell^{(k)})_{\ell=1, \dots,
|\mF_{k,n}|}$ and the identities of the $k$-simplices
$\sigma_\ell, \ell \in \big[ |\mF_{k,n}| \big]$, are given,  one of the events
$B_j, \, j \in \big[ |\mF_{k,n}| 
\big]$, will happen, where $B_j$ denotes the event that the next arriving
vertex attaches itself to $\sigma_j$. The probability of this event
(given the state) is $(c_j^{(k)}+\delta)/\bigl(
n(k+2)+\delta(1+n(k+1))\bigr)$ (see \eqref{e:prob.select.sigma}). When this event happens, the extended
process $\mD^*$ will move to the following state. First of all, the
list of the $k$ simplices will be augmented by the $k+1$ new
$k$-simplices consisting of the new vertex and $k$ of the $k+1$
vertices of $\sigma_j$. The new $k$-simplices will have identical
degrees $(k-m+1)_{m=0}^k$. Finally, for any $m=0,\ldots, k$, and
any existing $k$-simplex $\sigma_\ell,   \ell \in \big[ |\mF_{k,n}| \big]$
such that  $\{\sigma_j(k-m), \dots,\sigma_j(k)\}\subset  \sigma_\ell$,
the entry  $c_\ell^{(m)}$ is increased by 1, because of the addition of a newly arriving vertex.  

Similarly, whenever $\widetilde\mD (n)=(c_\ell^{(0)},\dots, c_\ell^{(k)})_{\ell=1, \dots,
|\mH_{k,n}|}$ and  the identities of the $k$-simplices
$\sigma_\ell, \ell \in \big[ |\mH_{k,n}| \big]$, are given, one of the events
$\widetilde B_j, \, j \in \big[ |\mH_{k,n}| 
\big]$, will happen, where $\widetilde B_j$ is  the event that the
B.I. process $BI_{\sigma_j}$ has an upward jump at time $T_{n+1}$. Observe that the probability of this event
(given the state) is again $(c_j^{(k)}+\delta)/\bigl(
n(k+2)+\delta(1+n(k+1))\bigr)$, as a result of  the standard competition
of exponential clocks and \eqref{e:sum.k-degree}. By construction, 
the
list of the $k$ simplices will again be augmented by the $k+1$ new
$k$-simplices consisting of the new vertex and $k$ of the $k+1$
vertices of $\sigma_j$. Furthermore, the new $k$-simplices will have identical
degrees $(k-m+1)_{m=0}^k$ and, by \eqref{e:widetilde.D.n}, these same
vectors will augment the state of the process $\widetilde\mD^*$. Also
by \eqref{e:widetilde.D.n}, for any $m=0,\ldots, k$, and
any existing $k$-simplex $\sigma_\ell,   \ell \in \big[ |\mH_{k,n}| \big]$
such that  $\{\sigma_j(k-m), \dots,\sigma_j(k)\}\subset  \sigma_\ell$,
the entry  $c_\ell^{(m)}$ is increased by 1. 

This establishes the required correspondence between the transitions
of the two processes and, hence, completes the proof. 
\end{proof}
\medskip

\subsection{Proof of Theorem \ref{t:joint.weak.conv}}

\begin{proof}[Proof of Theorem \ref{t:joint.weak.conv}]
The process 
$$
BI^\ms{com}(t):= \frac{1}{k+2}\sum_{\ell=1}^\infty BI_{\sigma_\ell}(t-T_{b(\sigma_\ell)})\, \one \{ t\ge T_{b(\sigma_\ell)} \}, 
$$
is easily seen to be a B.I.~process  
with parameters $(\tau, \delta)$, such that
$BI^\ms{com}(0)=1$. It is a standard fact about linear B.I. processes that,
as $t\to\infty$, 
$$
e^{-\tau t}BI^\ms{com}(t) \to G\sim \Gamma(1+\delta/\tau,1) \ \
\text{a.s.}, 
 $$
see e.g.  \cite[Theorem 1]{wang:resnick:2019} and Remark 2
therein. Since by   \eqref{e:sum.BI.proc}, we have 
$BI^\ms{com}(T_n) = n+1 $ for each $n$, 
and  $T_n\to\infty$ almost surely as $n\to\infty$, we conclude that 
$$
e^{-\tau T_n}(n+1) = e^{-\tau T_n} 
BI^\ms{com}(T_n)  \to G\ \ \text{a.s.}
$$
as $n\to\infty$, which is  equivalent to 
\begin{equation}  \label{e:scaling.const}
n^{1/\tau} e^{-T_n} \to G^{1/\tau}  \ \ \text{a.s.}, \ n\to\infty.
\end{equation}
Appealing to  Proposition \ref{p:degree.and.BI}, we have 
\begin{align*}
&\bigg( \Big( \frac{D_n^{(m)}(\sigma_j(k-m), \dots, \sigma_j(k))}{n^{1/\tau}} \Big)_{m=0}^k, \ j \in \big[ |\mF_{k,n}| \big] \bigg)_{n\ge0}  \\
&\stackrel{d}{=}\bigg( \bigg( \frac{k-m}{n^{1/\tau}}+\frac{1}{(k-m+1)n^{1/\tau}} \hspace{-25pt}\sum_{\substack{\ell \ge 1, \\ \{\sigma_j(k-m), \dots, \sigma_j(k)\}\subset  \sigma_\ell}} \hspace{-35pt} BI_{\sigma_\ell} (T_n-T_{b(\sigma_\ell)})\, \one \{ T_n\ge T_{b(\sigma_\ell)} \}\bigg)_{m=0}^k, 
 \ j \in \big[ |\mH_{k,n}| \bigg)_{n\ge0}. 
\end{align*}

Since for each $\ell\ge1$,  $BI_{\sigma_\ell}$ is a B.I.~process with
parameters $(1,\delta)$ and $BI_{\sigma_\ell}(0)=1$, it follows, once
again from \cite[Theorem 1]{wang:resnick:2019}, that, as $t\to\infty$, 
$$
e^{-t} BI_{\sigma_\ell}(t)  \to G_\ell\sim \Gamma(1+\delta,1)  \ \ \text{a.s.}
$$
As $(BI_{\sigma_\ell})_{\ell\ge1}$ are i.i.d., so are $(G_\ell)_{\ell\ge1}$. Since $T_n\to\infty$ almost surely,  we have, for every $\ell\ge1$,
$$
\frac{BI_{\sigma_\ell}(T_n-T_{b(\sigma_\ell)})}{e^{T_n}} \to G_\ell
e^{-T_{b(\sigma_\ell)}} \ \text{a.s.}
$$
as $n\to\infty$. Automatically, 
$G_\ell$ and $T_{b(\sigma_\ell)}$ are independent for each $\ell\geq
1$. 
Now, the claim of the
theorem follows from \eqref{e:scaling.const}. 
\end{proof}
\medskip

\section{Appendix}

Consider the sequence $(\marp_i)_{i\ge k+1}$, defined recursively by 
\begin{equation}  \label{e:recursive.mar.p}
\marp_i = \frac{(i-k-1)b_0+\delta}{(i-k)b_0 +(k+2)(1+\delta)}\, \marp_{i-1} + \frac{\tau}{b_0 + (k+2)(1+\delta)}\, \one \{ i=k+1 \}, \ \ \ i\ge k+1, 
\end{equation}
where $b_0$ and $\tau$ are defined in \eqref{e:def.bm.tau}. The
following lemma verifies that this sequence is a pmf on $\{
k+1,k+2,\ldots\}$. 

\begin{lemma}  \label{l:dist.marp}
Let $Z$ be the Pareto random variable in \eqref{e:def.Z} and let $T_{a_0}(Z^{-1})$ be the negative binomial
random variable  \eqref{e:def.neg.binomial} with shape $a_0
:= (k+1)(1+\delta)/b_0$ and random probability for success $Z^{-1}$. Then for
any $i\geq k+1$, 
\begin{equation}  \label{e:D.inf.T.Z}
\P\big(k+1 +T_{a_0}(Z^{-1})=i\big) = \marp_i.
\end{equation} 
\end{lemma}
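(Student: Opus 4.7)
The plan is to compute $q_i := \P\bigl(k+1+T_{a_0}(Z^{-1})=i\bigr)$ in closed form and verify that the resulting sequence $(q_i)_{i \geq k+1}$ satisfies the same recursion \eqref{e:recursive.mar.p} as $(\marp_i)$, with the same initial value at $i=k+1$. Setting $\ell := i-k-1 \geq 0$, I would condition on $Z=z$, combine the negative binomial pmf \eqref{e:def.neg.binomial} with the Pareto density \eqref{e:def.Z}, and obtain
$$q_i = \frac{\tau}{b_0} \cdot \frac{\Gamma(\ell+a_0)}{\ell!\,\Gamma(a_0)} \int_1^\infty (1-z^{-1})^\ell\, z^{-a_0-1-\tau/b_0}\,dz.$$

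The substitution $u = 1-z^{-1}$, $dz = du/(1-u)^2$, converts the integral into the Beta integral $\int_0^1 u^\ell (1-u)^{a_0+\tau/b_0-1}\,du = B(\ell+1, a_0+\tau/b_0)$. After using $\Gamma(\ell+1)=\ell!$ to cancel factorials, this yields the closed form
$$q_i = \frac{\tau}{b_0} \cdot \frac{\Gamma(\ell+a_0)\,\Gamma(a_0+\tau/b_0)}{\Gamma(a_0)\,\Gamma(\ell+1+a_0+\tau/b_0)}, \qquad \ell = i-k-1 \geq 0.$$

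From this expression the ratio reads
$$\frac{q_i}{q_{i-1}} = \frac{\ell+a_0-1}{\ell+a_0+\tau/b_0}, \qquad \ell = i-k-1 \geq 1,$$
which can be rewritten using the identity $b_0 a_0 = (k+1)(1+\delta) = b_0+\delta$ (immediate from \eqref{e:def.bm.tau}) and the auxiliary identity $(k+1)(1+\delta)+\tau = b_0+(k+2)(1+\delta)$ to match $\bigl((i-k-1)b_0+\delta\bigr)/\bigl((i-k)b_0+(k+2)(1+\delta)\bigr)$, which is precisely the ratio prescribed by \eqref{e:recursive.mar.p}. The same identities give $q_{k+1} = \tau/(b_0 a_0+\tau) = \tau/\bigl(b_0+(k+2)(1+\delta)\bigr) = \marp_{k+1}$, so the initial values agree, and induction on $i$ completes the proof.

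The main obstacle is purely notational bookkeeping: the constants $b_0$, $\tau$, $a_0$ are tied by several closely related identities, and one must be careful in converting the clean Gamma-function ratio into the exact form appearing in \eqref{e:recursive.mar.p}. The integration and the substitution are otherwise routine.
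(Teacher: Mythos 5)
Your proposal is correct. The computation of $q_i$ via the Beta integral, the closed Gamma-function form, and the two identities $b_0 a_0=(k+1)(1+\delta)=b_0+\delta$ and $(k+1)(1+\delta)+\tau=b_0+(k+2)(1+\delta)$ all check out, and the ratio $\frac{q_i}{q_{i-1}}=\frac{\ell+a_0-1}{\ell+a_0+\tau/b_0}$ does reduce to the recursion coefficient in \eqref{e:recursive.mar.p}. This is essentially the paper's argument with the final step inverted: the paper first solves \eqref{e:recursive.mar.p} to obtain a closed Gamma-ratio form for $\marp_i$ and then shows the integral expression equals it, whereas you compute the integral to the same closed form and then verify that this form satisfies the recursion by taking successive ratios; the two routes involve the same Beta integral and the same bookkeeping with $a_0,b_0,\tau$, so the difference is purely presentational.
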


\begin{proof}
One can readily solve the recursive equation \eqref{e:recursive.mar.p}: 
\begin{equation}  \label{e:sol.mar.p}
\marp_i =\frac{\tau}{b_0 + (k+2)(1+\delta)} \cdot \frac{\Gamma(\beta_{k+2})}{\Gamma(\alpha_{k+2})}\cdot \frac{\Gamma(\alpha_{i+1})}{\Gamma (\beta_{i+1})}, \ \ \ i\ge k+1, 
\end{equation}
with $\alpha_i := i-k-1 +\delta/b_0$ and $\beta_i :=
i-k+(k+2)(1+\delta)/b_0$ for  $i\geq k+2$. Now,  an
elementary calculation shows that  for every $i\ge k+1$, 
\begin{align*}
\P \big( k+1+T_{a_0}(Z^{-1})=i\big) &=\frac{\Gamma(i-k-1+a_0)}{(i-k-1)!\, \Gamma (a_0)}\int_1^\infty (1-z^{-1})^{i-k-1} z^{-a_0}  \frac{\tau}{b_0} z^{-(1+\tau/b_0)}\dif z \\
 &= \frac{\tau}{b_0}\cdot  \frac{\Gamma(a_0+\tau/b_0)}{\Gamma (a_0)}\cdot \frac{\Gamma(i-k-1+a_0)}{\Gamma(i-k+a_0+\tau/b_0)}. 
\end{align*}
It is straightforward to verify that the final expression above is
equal to  the right-hand side of \eqref{e:sol.mar.p},
so \eqref{e:D.inf.T.Z} follows. 
\end{proof}

\begin{lemma}  \label{l:SLLN.marNn}
 For $ i\ge k+1$,  let 
 \begin{align*}
\marm_n(i) :=& \E \bigg[ \sum_{ \substack{(i_1,\dots,i_k): \\ 1\le i_k < \dots <i_1
    <i}} N_n (i, i_1,\dots,i_k) \bigg]
   = \sum_{\substack{(i_1,\dots,i_k): \\ 1\leq i_ k<\cdots <i_1<i}}m_n(i,i_1,\ldots,
i_k). 
  \end{align*}
  Then,  for the sequence of probability measures
  $\bigl(\marm_n(i)/(1+(n+1)(k+1)), \, i\geq k+1\bigr)$, 
it holds that for every $i\ge k+1$, 
$$
\frac{\marm (i)}{1+(n+1)(k+1)}\to \marp_i \ \ \text{as } n\to\infty.
$$
\end{lemma}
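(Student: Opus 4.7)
The plan is to work directly with a one-dimensional recursion for $\marm_n(i)$, bypassing the full $(k+1)$-dimensional recursion \eqref{e:recursive.mn}. First, I would reinterpret the quantity under the expectation: the sum $\sum_{1\le i_k<\cdots<i_1<i}N_n(i,i_1,\dots,i_k)$ counts exactly those $k$-simplices $\sigma\in\mF_{k,n}$ whose youngest vertex $\sigma(k)$ has $0$-degree equal to $i$ in $G_n$. This reduction from a multi-index constraint to a single degree condition is what makes the rest of the argument essentially one-dimensional.

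Second, I would derive a recursion for $\marm_n(i)$ via the same bookkeeping used in Lemma~\ref{l:SLLN.exp.pmf}, restricted to the $0$-degree. At stage $n$ a simplex contributes to $\marm_n(i)$ in one of three ways: (a) it was in $\mF_{k,n-1}$ with $0$-degree $i$ at its youngest vertex, which is not among the vertices of the $k$-simplex selected at time $n$; (b) it was in $\mF_{k,n-1}$ with $0$-degree $i-1$ at its youngest vertex, which is among the vertices of the selected $k$-simplex; or (c) it is one of the $k+1$ newly created $k$-simplices, whose common youngest vertex has $0$-degree $k+1$. The conditional probability that a vertex of current $0$-degree $j$ lies in the selected $k$-simplex equals $((j-k)b_0+\delta)/(\tau n+\delta)$ by \eqref{e:sum.k-degrees} with $m=0$. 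Collecting these three contributions yields
\begin{equation*}
\marm_n(i)=\Bigl(1-\tfrac{(i-k)b_0+\delta}{\tau n+\delta}\Bigr)\marm_{n-1}(i)+\tfrac{(i-k-1)b_0+\delta}{\tau n+\delta}\,\marm_{n-1}(i-1)+(k+1)\one\{i=k+1\}.
\end{equation*}

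Third, setting $\vep_n(i):=\marm_n(i)-\bigl(1+(n+1)(k+1)\bigr)\marp_i$, I would substitute this ansatz into the recursion above and use the defining relation \eqref{e:recursive.mar.p} for $\marp_i$, together with the algebraic identity $\tau-(k+1)\delta=k+2$, to reduce the resulting equation to the schematic form
\begin{equation*}
\vep_n(i)=\Bigl(1-\tfrac{(i-k)b_0+\delta}{\tau n+\delta}\Bigr)\vep_{n-1}(i)+\tfrac{(i-k-1)b_0+\delta}{\tau n+\delta}\,\vep_{n-1}(i-1)+O\!\left(\tfrac{1}{n}\right).
\end{equation*}
An induction on $i\ge k+1$ then finishes the argument, exactly as in Lemma~\ref{l:SLLN.exp.pmf}: the base case $i=k+1$ uses $\marm_n(k)\equiv 0\equiv\marp_k$ (every vertex has $0$-degree at least $k+1$ at all times), so $\vep_{n-1}(k)\equiv 0$ and the recursion collapses to the elementary form \eqref{e:easy.rec}. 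The inductive step uses the boundedness of $(\vep_n(i-1))_{n\ge 0}$ to absorb the $\vep_{n-1}(i-1)$-term into the $O(1/n)$ inhomogeneity, and \eqref{e:easy.rec} applies once more. Dividing $\marm_n(i)$ by $1+(n+1)(k+1)$ yields the claimed limit.

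The main obstacle will be the algebraic cancellation in step three. A naive substitution of the ansatz leaves an $O(1)$ residue coming from the $(n+1)$-versus-$n$ mismatch, and only after invoking \eqref{e:recursive.mar.p} together with $\tau-(k+1)\delta=k+2$ does this residue telescope into the $O(1/n)$ inhomogeneity, which is precisely what makes the induction on $i$ close.
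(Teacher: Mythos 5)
Your proposal is correct and follows essentially the same route as the paper: derive the one-dimensional recursion \eqref{e:recursive.marm} for $\marm_n(i)$ directly from the rich/poor bookkeeping (rather than by summing \eqref{e:recursive.mn}), subtract an affine-in-$n$ multiple of $\marp_i$, and close the resulting error recursion by the elementary bound \eqref{e:easy.rec} with induction on $i$. The only cosmetic difference is that the paper centers by $n(k+1)\marp_i$ rather than by $(1+(n+1)(k+1))\marp_i$; since these differ by the constant $(k+2)\marp_i$, the boundedness conclusions are identical, and your observation about the identity $\tau-(k+1)\delta=k+2$ (equivalently $(k+2)(1+\delta)=\tau+\delta$) being what makes the inhomogeneity collapse to $O(1/n)$ is exactly the algebra hiding in the paper's displayed $\marvep_n$ recursion.
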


\begin{proof}
 The consideration of the scenarios used in the derivation of
 \eqref{e:recursive.mn} establishes a (simpler) recursion 
\begin{align}
\begin{split}  \label{e:recursive.marm}
\marm_n(i) &= \Big( 1-\frac{(i-k)b_0+\delta}{\tau n +\delta} \Big) \marm_{n-1}(i)  + \frac{(i-k-1)b_0 +\delta}{\tau n +\delta}\, \marm_{n-1}(i-1)  + (k+1)\one \{ i=k+1 \}. 
\end{split}
\end{align}
Let $\marvep_n (i) := \marm_n(i) -n(k+1)\marp_i$. 
Multiplying both sides of the recursion  \eqref{e:recursive.mar.p} by
$n(k+1)$ and subtracting it from \eqref{e:recursive.marm} gives us 
\begin{align*}
\marvep_n(i) &= \Big( 1-\frac{(i-k)b_0+\delta}{\tau n +\delta} \Big)\marvep_{n-1}(i) + \frac{(i-k-1)b_0+\delta}{\tau n +\delta}\, \marvep_{n-1}(i-1) \\
&\quad +\frac{\big( (i-k)b_0+\delta \big)(\tau +\delta)(k+1)}{\tau (\tau n +\delta)}\, \marp_i - \frac{\big( (i-k-1)b_0+\delta \big)(\tau +\delta)(k+1)}{\tau (\tau n +\delta)}\, \marp_{i-1}. 
\end{align*}
An  argument similar to (but simpler than) that  used to
establish boundedness of the sequence $\bigl( \vep_n(i_0,\dots,i_k)\bigr)_{n\ge0}$
in the proof of Lemma \ref{l:SLLN.exp.pmf}, shows that for every $i \ge k+1$, the 
sequence $(\marvep_n(i))_{n\ge0}$ is bounded  as well, and the claim of the lemma follows. 
\end{proof}


\end{document}